\begin{document}
	
	\newtheorem{theorem}{Theorem}[section]
	\newtheorem{lemma}[theorem]{Lemma}
	\newtheorem{assumption}[theorem]{Assumption}
	\newtheorem{proposition}[theorem]{Proposition}
	\newtheorem{corollary}[theorem]{Corollary}
	\newtheorem{definition}{Definition}[section]
	\newtheorem{remark}{Remark}[section]
	\newtheorem{example}{Example}[section]

\begin{frontmatter}
	\author{Jie Zhu\fnref{addr0}}
	\author{Yujun Zhu\fnref{addr0}} 
	\author{Ju Ming\fnref{addr0,addr2}\corref{cor1}} 
	\cortext[cor1]{Corresponding author.\\ 
		Email: jming@hust.edu.cn (Ju Ming)}
	\author{Max D. Gunzburger\fnref{addr3}}
	\address[addr0]{School of Mathematics and Statistics, Huazhong University of Science and Technology, Wuhan 430074, PR China}
	\address[addr2]{Hubei Key Laboratory of Engineering, Modeling and Scientific Computing, Huazhong University of Science and Technology, Wuhan 430074, PR China}
	
	\address[addr3]{Department of Scientific Computing, 
		Florida State University, 
		Tallahassee,  FL 32304, USA}
	
	
	\title{Splitting Method for Stochastic Navier-Stokes Equations}

	\begin{abstract}
		This paper investigates the two-dimensional stochastic steady-state  Navier-Stokes(NS) equations with additive random noise. We introduce an innovative splitting method that decomposes the stochastic NS equations into a deterministic NS component and a stochastic equation. We rigorously analyze the proposed splitting method from the perspectives of equivalence, stability,  existence and uniqueness of the solution. We also propose a modified splitting scheme, which simplified the stochastic equation by omitting its nonlinear terms.  A detailed analysis of the solution properties for this modified approach is provided. Additionally, we discuss the statistical errors with both the original splitting format and the modified scheme. Our theoretical and numerical studies demonstrate that the equivalent splitting scheme exhibits significantly enhanced stability compared to the original stochastic NS equations, enabling more effective handling of nonlinear characteristics. Several numerical experiments were performed to compare the statistical errors of the splitting method and the modified splitting method.   Notably, the deterministic NS equation in the splitting method does not require repeated solving, and the stochastic equation in the modified scheme is free of nonlinear terms. These features make the modified splitting method particularly advantageous for large-scale computations, as it significantly improves computational efficiency without compromising accuracy.
	\end{abstract}
	
	\begin{keyword}
		Splitting schemes \sep Finite element method \sep Monte-Carlo method  \sep Steady-state Navier–Stokes equations \sep White noise
	\end{keyword}
	
\end{frontmatter}

\section{Introduction}\label{sec1}
The stochastic Navier-Stokes (SNS) equation constitutes a significant extension of the classical Navier-Stokes (NS) \cite{C_L_1823} equation, incorporating the inherent variability and unpredictability in real-world fluid dynamics. The SNS equation finds relevance in diverse fields, including machinery, power generation, construction, water management, chemical engineering, energy, environmental science, and biology
\cite{Przemyslaw_2019,Bhatti_2020,Yang_2021}.  In recent years, scholars have proposed a large number of methods for dealing with stochastic complex systems\cite{Holden_1996,Carmona_1999,Kallianpur_1995,Lions_2000}, commonly involving the decomposition of system parameters into deterministic components and stochastic components influenced by noise, such as white or color noise \cite{DaPrato_1992,Alos_2002}, to extract valuable statistical information. In this work, we focus on the following steady-state incompressible  SNS system
\begin{eqnarray}\label{SNS}
	\begin{cases}
		-\nu\Delta \textbf{u}+(\textbf{u}\cdot\nabla) \textbf{u}+\nabla p=F+\sigma\frac{d\textbf W}{d\textbf x} \quad \text{ in } \Omega, \\
		\nabla\cdot\textbf{u}=0\quad \text{in }\Omega ,\\
		\textbf{u}|_{\partial\Omega}=\textbf{0},
	\end{cases}
\end{eqnarray}
where $\Omega\subset \mathbb{R}^2$ is connected and Lipschitz bounded, $\textbf{u}=(u_1,u_2)^{T}$ denotes the velocity field, $p$ the pressure, $\nu$ the given constant kinematic viscosity, and $F+\sigma\frac{d\textbf W}{d\textbf x}$ is the given body force with an additive spatial random noise, a modeling approach widely employed in various fields such as fluid dynamics, random simulation, and a broad range of related fields.

The system described by 
\eqref{SNS} plays a pivotal role in numerous engineering applications. However, traditional numerical techniques often struggle to effectively balance computational time and error, particularly when dealing with the complex dynamics inherent in such systems. To address these challenges, the splitting method, also known as operator splitting, has emerged as a prevalent numerical approach for solving the NS  flow  \cite{Gunzburger_2016,Deugoue_2021,Bessaih_2019,Zhao_2017,Deugoue_2014,Dorsek_2012}. This approach entails decomposing the governing equations into a series of simpler sub-problems, which can then be solved sequentially or iteratively.  By doing so, the splitting method significantly enhances computational efficiency and accuracy, making it particularly effective for handling the intricate nonlinear terms and constraints present in the Navier-Stokes equations.  In order to apply the splitting method, the Navier-Stokes equations are partitioned into sub-problems based on the convection and diffusion terms\cite{Zhao_2020}.  A prevalent approach entails dividing the equations into a convection step and a diffusion step, each of which can be solved using specialized numerical techniques tailored to the specific characteristics of the sub-problem.	During the convection step, the convection term is treated explicitly, while the diffusion term is either neglected or approximated.  This stage involves solving the equations without considering diffusion effects, yielding an intermediate velocity field.  A variety of numerical methods, such as the finite volume method \cite{Capdeville_2022} or finite difference method \cite{Lucca_2023}, can be employed to efficiently solve the convection step.	Subsequently, in the diffusion step, the diffusion term is treated implicitly, whereas the convection term is either neglected or approximated.  This stage encompasses solving the equations considering diffusion effects, typically utilizing implicit numerical schemes.  This can be accomplished by solving a linear system of equations, employing methods like the implicit Euler method \cite{Cherfils_2016} or implicit Crank-Nicolson method \cite{Cho_2023} . By sequentially addressing the convection and diffusion steps, the splitting method effectively decouples the effects of convection and diffusion, facilitating a more straightforward and efficient numerical solution of the stochastic NS equations. However, it is crucial to recognize that the splitting method introduces errors due to the neglect or approximation of terms in each step.  The accuracy and stability of the method are dependent upon the specific splitting scheme chosen and the numerical methods employed in each step.

The splitting method has predominantly been employed for addressing complex high-dimensional equation problems. Early research on this method can be traced back to the review papers by Strang \cite{Strang_1968} and Marchuk \cite{Marchuk_1968}, who were among the first pioneers in this field.  Additionally, the splitting method has been successfully applied to analyze error estimation and convergence results for a wide range of equations, such as the fourth-order elliptic equation, Buregers' equation \cite{Seydaoglu_2016}, the two-dimensional and three-dimensional Allen-Cahn equation \cite{Brehier_2016}, and others \cite{Gunzburger_2013}. Commonly used splitting techniques include the projection method, Yoshida splitting method, and others. In the 1960s, Chorin  \cite{Chorin_1968} and Team \cite{Temam_1969} proposed the projection method , which aims to solve equations by splitting the nonlinearity from the incompressibility constraint.  This approach has since inspired numerous variants and extensions \cite{Guermond_2006,Brown_2001,Prohl_2009,Olshanskii_2010}. Initially, the equations are solved without taking the incompressibility constraint into account, yielding an intermediate velocity field.  In the subsequent step, this intermediate velocity field is projected onto the space of solenoidal (divergence-free) vector fields. By segregating the nonlinearity from the incompressibility constraint and solving the equations in two distinct steps, the projection method offers computational advantages and improves memory usage. The Yoshida splitting method shares the similar spirit with the projection method but differs in the decomposition process. In the Yoshida splitting method, the system is fully decomposed after the matrix is constructed, before the system is split.  This method proves to be particularly effective for problems in which the time scale is smaller than the spatial scale and requires adaptive time steps. Consequently, it  offers adaptive time step capabilities and overcomes the challenges associated with implementing artificial boundary conditions
\cite{Saleri_2006,Badia_2008,Bertagna_2016}.

In order to enhance the stability and computational efficiency of the solutions to equation \eqref{SNS}, we  propose an advanced splitting scheme tailored for the steady-state incompressible  NS flow.

Setting $\textbf{u}=\bm{\xi}+\bm{\eta}$, we have that $(\bm{\xi},p_1)$ is the solution of the deterministic NS equation
\begin{eqnarray}\label{SSNS1}
	\begin{cases}
		-\nu\Delta \bm{\xi}+(\bm{\xi}\cdot\nabla) \bm{\xi}+\nabla p_1=F \quad \text{ in } \Omega,\\
		\nabla\cdot\bm{\xi}=0\quad \text{in }\Omega ,\\
		\bm{\xi}|_{\partial\Omega}=\textbf{0}.
	\end{cases}
\end{eqnarray}
and $(\bm{\eta},p_2)$ satisfies the following problem
\begin{eqnarray}\label{SSNS2}
	\begin{cases}
		-\nu\Delta \bm{\eta}+(\bm{\eta}\cdot\nabla)\bm{\eta}+(\bm{\eta}\cdot\nabla) \bm{\xi}+(\bm{\xi}\cdot\nabla)\bm{\eta}
		+\nabla p_2=\sigma\frac{d\textbf W}{d\textbf x}\quad \text{ in } \Omega, \\
		\nabla\cdot\bm{\eta}=0\quad \text{in }\Omega ,\\
		\bm{\eta}|_{\partial\Omega}=\textbf{0}.
	\end{cases}
\end{eqnarray}
The linearized Navier-Stokes equations (LNSE) is derived from the Navier-Stokes equations under the assumption of small disturbances in fluid flow. This simplification reduces the complexity of the inherently nonlinear equations, facilitating a more streamlined analysis of fluid dynamics problems, particularly applicable when fluid motion is relatively slow or disturbances are small. In the late 19th century, British scientist Osborne Reynolds experimentally observed  that fluid flow transitions from laminar to turbulent states as the Reynolds number exceeds a certain threshold  \cite{Reynolds_1883}.To delve deeper into understanding this transition between laminar and turbulent flows, researchers attempted to linearize the vorticity transport equation (Navier-Stokes equations). In 1895, British mathematician Horace Lamb systematically derived the linearized form of the Navier-Stokes equations, laying the theoretical foundation for LNSE  \cite{Lamb_1895}. Since then, LNSE has been widely employed in fluid dynamics research, particularly in studies of fluid stability, boundary layer flows, Reynolds number effects, laminar-turbulent transition, intermittent flows, and fluid wave analysis. With advancements in computing technology, LNSE has become a crucial tool for numerical simulations in fluid mechanics, applied across various fields including aerospace, ocean engineering, and environmental science for tasks such as aircraft wing design, ocean fluid dynamics analysis, and hydrological modeling \cite{Farrell_1993,Jovanovic_2001,Barbu_2011,Grenier_2022}. Building upon the principles of LNSE, which consider small perturbations in fluid motion, we propose a modified splitting method to simplify the analysis by focusing on the linearized stochastic equations, omitting the nonlinear parts $(\bm{\eta}\cdot\nabla)\bm{\eta}$ of the stochastic equation \eqref{SSNS2} , aiming to provide a more concise and manageable framework for analytical and numerical investigations.

The modified splitting method satisfies equation \eqref{SSNS1} and the linear stochastic equation below
\begin{eqnarray}\label{SSNS3}
	\begin{cases}
		-\nu\Delta \tilde{\bm{\eta}}+(\tilde{\bm{\eta}}\cdot\nabla) \bm{\xi}+(\bm{\xi}\cdot\nabla)\tilde{\bm{\eta}}
		+\nabla \tilde p_2=\sigma\frac{d\textbf W}{d\textbf x}\quad \text{ in } \Omega, \\
		\nabla\cdot\tilde{\bm{\eta}}=0\quad \text{in }\Omega ,\\
		\tilde{\bm{\eta}}|_{\partial\Omega}=\textbf{0}.
	\end{cases}
\end{eqnarray}

This paper aims to propose the splitting method and a modified splitting method  to simplify the analysis process. The primary advantage of these methods lies in their ability to relaxe the conditions of existence, stability, and uniqueness of the solution, while ensuring that the efficiency and accuracy of Monte Carlo simulation will be greatly improved under the condition that the error is relatively small.

The paper is organized as follows: in Section \ref{sec2}, we introduce the concepts and properties of functions and spatial symbols and give the weak schemes for equation\eqref{SNS} and splitting scheme \eqref{SSNS1}-\eqref{SSNS2}. Furthermore, we delve into an extensive analysis of the properties exhibited by solutions to steady-state SNS equation. In Section \ref{sec3}, we first establish the equivalence between the splitting method and the original equation and identify the sources of statistical error. Then, we prove the stability, existence, and uniqueness of the solutions obtained through the splitting method. Additionally, we consider a modified splitting format (equation \eqref{SSNS1} and equation \eqref{SSNS3}) that disregards the nonlinear terms and analyze the properties of the solution. In Section \ref{sec4}, we rigorously explore the Galerkin finite element discrete scheme of the splitting method and the modified splitting format, while also give the stability, existence and uniqueness conditions of the finite element solution. Moreover, a particular emphasis is placed on analysing the discrete error associated with the splitting scheme.  In numerical experiments, the splitting method, the modified splitting method and the finite element solution were compared under different sample sizes and random noise, and the advantages of the splitting method in stability conditions and computational efficiency were demonstrated, as described in Section \ref{sec5}. We address some conclusions and discussions in Section \ref{sec6}.

\section{Preliminariesg}\label{sec2}
In this section, we provide an exposition of the standard functions, space symbols, and mathematical notations that will be employed consistently across this paper. Subsequently, we undertake a comprehensive review of the definition of the variational weak solution pertaining to problem \eqref{SNS}. Furthermore, we rigorously establish the correlation properties inherent in the weak solutions for SNS equation. The outcomes of these auxiliary investigations assume utmost significance in comprehending the correlation properties exhibited by the solutions acquired through the employment of the splitting method, as well as in conducting error analyses in the subsequent sections.
\subsection{Notation}
Due to $\Omega$ denote an open, bounded, possibly multiply connected spatial domain in $\mathbb{R}^2$, we denote by $L^2(\Omega)$ the space of all square integrable functions with inner product and norm ($\cdot$, $\cdot$) and $\|\cdot\|$ , respectively 
$$
(q,v)=\int_\Omega qv~d\Omega \quad  \text{ and~~~} \|v\|= (v,v)^\frac{1}{2}.
$$
Define the constrained space
$$
L^2_0(\Omega)= \left\{q\in L^2(\Omega):\int_\Omega q d\Omega= 0 \right\}.
$$
For given positive integers $k\in \mathbb{N}$, we define the standard Sobolev space \cite{Adams_1978}
$$
H^k(\Omega)= \left\{q\in L^2(\Omega):D^s q\in L^2(\Omega) \quad \text{for~} s=1, \ldots, k \right\},
$$
with inner product $(\cdot, \cdot)_k$  and norm $\|\cdot\|_k$
$$
\|\cdot\|_k=\|\cdot\|_{H^k(\Omega)}=\left(\left\| q\right\|^2 +\sum\limits_{s=1}^k \left\| D^s q\right\|^2 \right )^{\frac{1}{2}},
$$
where $D^s$ denotes any and all derivatives of order $s$.

Clearly, $H^0(\Omega)= L^2(\Omega)$. Moreover, define 
$
H^k_0(\Omega)=\left\{q\in H^k(\Omega): q=0 \text{ on } \partial \Omega \right\}
$
is the closure of  $C^{\infty}_0(\Omega)$ under the norm $\|\cdot\|_k$. Specially, the space $H^1_0(\Omega)$ has the associated semi-norm \cite{Gunzburger_1989}
$$
|\cdot|_1:=\left(\sum\limits_{i=1}^n \left\| \frac{\partial q}{\partial x_i}\right\|^2 \right )^{\frac{1}{2}}.
$$

Besides, we define the dual space of $H^k_0(\Omega)$
$$
H^{-k}(\Omega):=(H^k_0(\Omega))^* \quad \text{ with~ ~ } \|q\|_{H^{-k}(\Omega)} = \sup\limits_{v\in H^{k}_0}\frac{(q,v)}{\|v\|_{H^k(\Omega)}}.
$$
To describe the space of random functions, we consider the probability space $\mathcal{P}:=(\Theta,\mathcal{F},\mu )$, where $\Theta$ is the sample space, $\mathcal{F}$ denotes the $\sigma$-algebra of evants and $\mu$ denotes the probability measure,  with the inner product and norm defined as
$$
(f_1,f_2)_\mu:= \int_\Theta f_1 f_2~d \mu \quad  \text{ and~~~} \|f_1\|_{L^2_\mu(\Theta)}= (f_1,f_1)_{\mu}^\frac{1}{2},
$$
respectively.  We can then define the expectation operator $\mathbb E$ as
\begin{equation*}
	\mathbb E[f]= \int_\Theta f ~d \mu.
\end{equation*}
Furthermore, we define the stochastic tensor space $L^2(\Theta; H^k(\Omega))$,  which is isomorphic to $L^2(\Theta)\otimes H^k(\Omega)$ with a linear map from $L^2(\Theta)\otimes H^k(\Omega)$ to $L^2(\Theta; H^k(\Omega))$ \cite{Smolyak_1963,Babuska_2004}, namely, 
$$L^2(\Theta; H^k(\Omega))\simeq L^2(\Theta)\otimes H^k(\Omega).$$
Define the stochastic Sobolev space 
$$L^2\left(\Theta;H^k(\Omega)\right)=\left\{ w:\Theta\times \Omega\to \mathbb R\lvert \quad \mathbb E\left[\|w\|^2_{H^k
	(\Omega)}\right]<\infty \right\}, $$
with the norm
$$ \|w\|_{L^2\left(\Theta; H^k(\Omega)\right)}:=\left(\mathbb E\left[\|w\|^2_{H^k(\Omega)}\right]\right)^\frac{1}{2}=\left(\int_{\Theta} \|w\|^2_{H^k(\Omega)}~d \mu\right)^\frac{1}{2}.$$

For simplicity, we define the following special spatial symbols
$$
\mathbb V :=L^2\left(\Theta; H^1(\Omega)\right),\quad \mathbb V_0 :=L^2\left(\Theta; H^1_0(\Omega)\right),$$
$$ \mathbb W :=L^2\left(\Theta; L^2(\Omega)\right),\quad\mathbb W_0 :=L^2\left(\Theta; L^2_0(\Omega)\right),
$$
$$
\textbf{V}_{div} :=\left\{\textbf{v}\in H^1_0(\Omega): \nabla \cdot \textbf{v}=0  \quad\text{in } \Omega  \right\},\quad
\textbf W_{div} :=\left\{\textbf{v}\in L^2_0(\Omega): \nabla \cdot \textbf{v}=0  \quad\text{in } \Omega  \right\},$$
$$
\mathbb V_{div} :=\left\{\textbf{v}\in \mathbb V_0: \nabla \cdot \textbf{v}=0  \quad\text{in } \Omega  \right\},\quad\mathbb W_{div} :=\left\{\textbf{v}\in \mathbb W_0: \nabla \cdot \textbf{v}=0  \quad\text{in } \Omega  \right\},
$$
where $\|\cdot\|_{\mathbb V}=\left(\mathbb E\left[\|\cdot\|^2_1\right]\right)^\frac{1}{2},$  $\|\cdot\|_{\mathbb W}=\left(\mathbb E\left[\|\cdot\|^2\right]\right)^\frac{1}{2}$ and  $\|\cdot\|_{\mathbb V_{0}}=\left(\mathbb E\left[|\cdot|^2_1\right]\right)^\frac{1}{2}.$  

In order to give the meaning of the stochastic term $\frac{d\textbf W}{d \textbf x}$ , we need to review the spatial noise approximation technique \cite{Gyongy_1997,Ming_2013,Gunzburger_2019}, which is used in signal processing, image processing, and stochastic simulation, among others. In this study, we focus on the grid-based approach\cite{Nobile_2008}, which involves partitioning the space into a regular grid and generating discretized white noise samples at each grid point. This discretization procedure introduces some regularity while preserving the characteristics of independence and Gaussian distribution. We begin by subdividing the spatial domain $\Omega$ into $N$ non-overlapping subdomains $\{\Omega_n\}^N_{n=1}$. Considering the $\textbf{x}=[x,y]^T$ is a two-dimensional vector, regional $\Omega=[0,1]\times[0,1].$ Let $\{x_i^{(n)}=nh\}_{n=0}^N$ be a partition of $[0,1]$ with $h=\frac{1}{n}, i=1,2.$ The random field $\textbf W$ represents a discretized version of a stochastic process defined over the spatial domain. A  piecewise constant approximation with respect to the subdivision $\{\Omega_n\}^N_{n=1}$ of the random field is given by\cite{Allen_1998,Du_2002,Yan_2004}
\begin{equation}\label{noise}
	\frac{d\textbf W}{d \textbf x}= \sum_{k=1}^N \frac{\sigma}{\sqrt{V_k}}\zeta_k(\omega)\chi_k(\textbf x), 
\end{equation}
where the coefficients $\sigma$ is variance, $V_k$ denotes the volume of the subdomain  $\Omega_n$, $ \{\chi_k(\textbf x)\}$ denotes the characteristic (or indicator) function corresponding to the subdomain , and $\zeta_k(\omega)$ denotes  an $\mathcal{N}(0,1)$ i.i.d. random number,
\begin{align*}
	\begin{split} 
		V_k=h^2=\frac{1}{n^2},\quad \quad \chi_k(\textbf x)= \left \{ 
		\begin{array}{ll}
			1,           &x_k\leq x \leq x_{k+1}, y_k\leq y \leq y_{k+1},\\
			0,     &otherwise.
		\end{array} 
		\right. 
	\end{split}
\end{align*}

Specifically, consider $\sigma=1$, the discrete discretized white noise form for the piecewise constant approximation is given by
$$
\lim_{N\to \infty}\mathbb E\left(\frac{d\textbf W(\textbf{x})}{d \textbf x}\cdot \frac{d\textbf W(\textbf{x}')}{d \textbf x} \right)=\delta (\textbf{x}-\textbf{x}'),
$$
where $\delta$ denotes the usual Dirac $\delta$-function and $\mathbb E$ the expectation. Via discretization, white noise has been reduced to the case of a large but finite number of parameters. 

\subsection{Weak formulation}
In order to represent the Galerkin-type weak scheme of the stochastic steady-state NS problem, we first  define some continuous formulations. 

The continuous bilinear forms
\begin{gather}
	a(\textbf{u},\textbf{v}):= \nu\int_\Omega\nabla\textbf{u}:\nabla\textbf{v}d\Omega, \quad \forall \textbf u,\textbf v\in H^1(\Omega),\nonumber\\
	b(\textbf{u}, q):= -\int_\Omega q\nabla\cdot\textbf{u}d\Omega, \quad \forall  q\in L^2(\Omega), \textbf u\in H^1(\Omega),\nonumber
\end{gather}
where $\nabla\textbf{u}:\nabla\textbf{v}=\sum\limits_{i,j=1}^{n}\frac{\partial u_i}{\partial x_j}\frac{\partial v_i}{\partial x_j}$.

The continuous trilinear form
\begin{gather}
	c(\textbf u,\textbf w,\textbf v):= \int_\Omega \textbf{u}\cdot\nabla\textbf{w}\cdot\textbf{v}d\textbf{x},  \quad \forall \textbf u, \textbf v, \textbf w\in H^1(\Omega),\nonumber
\end{gather}
where $\textbf{u}\cdot\nabla\textbf{v}\cdot\textbf{w}=\sum\limits_{i,j=1}^{n} u_j\frac{\partial v_i}{\partial x_j}w_i$.

According to some basic inequalities, we have the following properties
\begin{flalign}
	|a(\textbf u,\textbf v)|&\le C_a\|\textbf u\|_1\|\textbf
	v\|_1, \qquad \forall \textbf u, \textbf v \in H^1(\Omega),\label{a_1}\\
	(\textbf u,\textbf v)&\le \|\textbf u\|_{H^{-1}(\Omega)}|\textbf
	v|_1, \qquad \forall \textbf u\in H^{-1}(\Omega),\textbf v\in H^1(\Omega),\label{dual}\\
	|b(\textbf{v}, q)|&\le C_b\|\textbf v\|_1\|q\|, \qquad\quad \forall \textbf v \in H^1(\Omega),q \in L^2(\Omega),\label{b_1}\\
	|c(\textbf u,\textbf w,\textbf v)|&\le
	C_c\|\textbf u\|_1\|\textbf w\|_1\|\textbf v\|_1,\qquad \forall \textbf u, \textbf v, \textbf w\in H^1(\Omega),\label{c_2}\\
	\mathbb E[c(\textbf u,\textbf w,\textbf v)] &\le
	C_e\|\textbf u^T \textbf w\|_{\mathbb W}\|\textbf v\|_{\mathbb V},\qquad \forall \textbf u, \textbf v, \textbf w\in \mathbb V.\label{c_5}
\end{flalign}
Moreover, we have the properties \cite{Temam_2001}
\begin{gather}
	c(\textbf u,\textbf w,\textbf v)=
	-c(\textbf u,\textbf v,\textbf w),\quad \forall \textbf v,\textbf w \in {H^1_0(\Omega)}, \label{c_3}\\
	c(\textbf u,\textbf v,\textbf v)= 0,\quad \forall \textbf v,\textbf w \in {H^1_0(\Omega)}. \label{c_4}
\end{gather}

The weak form of  the homogeneous stochastic NS problem\eqref{SNS} is as follows: find $(\textbf u,p)\in(\mathbb V_0, \mathbb W_0)$ such that
\begin{equation}\label{w_SNS1}
	\begin{split}
		\mathbb E[a(\textbf u,\textbf w)]+\mathbb E[c(\textbf u,\textbf u,\textbf w)] +\mathbb E[b(\textbf{w}, p)] &= \mathbb E\left[\left(\textbf F+\sigma\frac{d\textbf W}{d\textbf x}, \textbf{w}\right)\right],\\
		\mathbb E[b(\textbf{u}, q)]&= 0,
	\end{split}
\end{equation}
for all test function $(\textbf{w},q)\in(\mathbb V_0, \mathbb W_0)$.

\begin{remark}[Variations of weak format]
	The problem \eqref{w_SNS1} can be formulated equivalently in the following way:  find $(\textbf u,p)\in(\mathbb V_0, \mathbb W_0)$ such that
	\begin{equation}\label{w_SNS2}
		\mathbb E[a(\textbf u,\textbf w)]+\mathbb E[c(\textbf u,\textbf u,\textbf w)] +\mathbb E[b(\textbf{w}, p)]-\mathbb E[b(\textbf{u}, q)]= \mathbb E\left[\left(\textbf F+\sigma\frac{d\textbf W}{d\textbf x}, \textbf{w}\right)\right],
	\end{equation}
	for all test function $(\textbf{w},q)\in(\mathbb V_0, \mathbb W_0).$
	
	Notice that the following weak formulation which is equivalent to \eqref{w_SNS1}: find $\textbf u
	\in \mathbb V_{div}$ such that
	\begin{equation}\label{w_SNS3}
		\begin{split}
			a(\textbf u,\textbf w)+c(\textbf u,\textbf u,\textbf w) = \left(\textbf{F}+\sigma\frac{d\textbf W}{d\textbf x}, \textbf{w}\right), a.s., \quad \forall \textbf w\in \mathbb V_{div},
		\end{split}
	\end{equation}
	where  a.s. refers to almost surely in the sense of the associated probability measure. 
\end{remark}
Obviously, the bilinear form which couples velocity and pressure is the same as  for Stokes and the spaces are the same, too. Therefore, given a solution $\textbf{u}$ of the \eqref{w_SNS3}, there exists a unique pressure $p\in \mathbb W_0$  such that $(\textbf{u}, p)$ solves  \eqref{w_SNS1}.

\subsection{The existence and uniqueness theorem  }
In this part, we prove the stability, existence and uniqueness of the solution of the Stochastic Steady-State NS equation \eqref{w_SNS1}. 
\begin{lemma} [inf-sup condition] \label{inf-sup condition}
	There exits a constant $\beta > 0$ such that \cite{Girault_1986,Douglas_1988}
	\begin{equation}\label{lem1}
		\sup_{0\neq\textbf w\in \mathbb V_0}\frac{\mathbb E[b(\textbf{w}, q)]}{\|\textbf w\|_{\mathbb V_0}} \geq
		\beta \|q\|_{\mathbb W}.
	\end{equation}
\end{lemma}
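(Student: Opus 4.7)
The plan is to lift the classical deterministic Stokes inf-sup condition on $L^2_0(\Omega)/H^1_0(\Omega)$ to the stochastic product spaces by constructing the test function pathwise. I will take the deterministic inf-sup condition (as stated in \cite{Girault_1986,Douglas_1988}) as known: there exists $\beta_0>0$ such that for every $q_0\in L^2_0(\Omega)$,
$$
\sup_{0\neq\textbf{v}\in H^1_0(\Omega)}\frac{b(\textbf{v},q_0)}{|\textbf{v}|_1}\geq \beta_0\|q_0\|.
$$
Equivalently (Bogovski\u{\i}), there is a bounded linear right-inverse $B:L^2_0(\Omega)\to H^1_0(\Omega)$ of $-\nabla\cdot$, so that $\nabla\cdot B(q_0)=-q_0$ and $|B(q_0)|_1\leq \beta_0^{-1}\|q_0\|$ for all $q_0\in L^2_0(\Omega)$.

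Given $q\in\mathbb{W}_0$, I would define the candidate test function pathwise by $\textbf{w}(\omega,\cdot):=B\bigl(q(\omega,\cdot)\bigr)$. Because $B$ is a bounded linear operator on a separable Hilbert space, the composition $\omega\mapsto\textbf{w}(\omega,\cdot)$ is strongly measurable into $H^1_0(\Omega)$, and the pointwise bound $|\textbf{w}(\omega)|_1\leq\beta_0^{-1}\|q(\omega)\|$ combined with $q\in\mathbb{W}_0$ yields, after squaring and taking expectation, $\textbf{w}\in\mathbb{V}_0$ with
$$
\|\textbf{w}\|_{\mathbb{V}_0}\leq \beta_0^{-1}\,\|q\|_{\mathbb{W}}.
$$
Moreover, the identity $\nabla\cdot\textbf{w}(\omega)=-q(\omega)$ gives $b(\textbf{w}(\omega),q(\omega))=\|q(\omega)\|^2$ almost surely, so that $\mathbb{E}[b(\textbf{w},q)]=\|q\|_{\mathbb{W}}^2$.

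Combining these two facts yields
$$
\sup_{0\neq\textbf{v}\in\mathbb{V}_0}\frac{\mathbb{E}[b(\textbf{v},q)]}{\|\textbf{v}\|_{\mathbb{V}_0}}\;\geq\;\frac{\mathbb{E}[b(\textbf{w},q)]}{\|\textbf{w}\|_{\mathbb{V}_0}}\;\geq\;\frac{\|q\|_{\mathbb{W}}^{2}}{\beta_0^{-1}\|q\|_{\mathbb{W}}}\;=\;\beta_0\|q\|_{\mathbb{W}},
$$
which is the claim with $\beta=\beta_0$. The one point that requires care is the measurability of the pathwise-constructed $\textbf{w}$ and the verification that it actually belongs to $\mathbb{V}_0$; invoking the \emph{linear} Bogovski\u{\i} operator rather than taking an $\omega$-by-$\omega$ near-maximizer of the deterministic supremum sidesteps any abstract measurable-selection argument, since linearity and continuity propagate the Bochner-measurability of $q$ directly to $\textbf{w}$. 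Aside from this, the argument is essentially a pathwise application of the deterministic inf-sup, so I expect no genuine obstacle beyond bookkeeping with the expectation and Cauchy--Schwarz.
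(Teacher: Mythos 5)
Your proposal is correct. Note, however, that the paper does not actually prove this lemma: it states \eqref{lem1} and cites \cite{Girault_1986,Douglas_1988}, which contain only the \emph{deterministic} inf-sup condition on $H^1_0(\Omega)\times L^2_0(\Omega)$; the lifting to the tensor-product spaces $\mathbb V_0\times\mathbb W_0$ is left entirely implicit. What you supply is precisely that missing lifting, and you do it the right way: rather than choosing, for each $\omega$, a near-maximizer of the deterministic supremum (which would require a measurable-selection argument), you use the linear bounded right-inverse $B$ of the divergence so that $\omega\mapsto B(q(\omega,\cdot))$ inherits Bochner measurability from $q$ for free. The pointwise identity $b(\textbf{w}(\omega),q(\omega))=\|q(\omega)\|^2$ and the pointwise bound $|\textbf{w}(\omega)|_1\le\beta_0^{-1}\|q(\omega)\|$ then pass through the expectation exactly as you indicate, matching the paper's convention $\|\cdot\|_{\mathbb V_0}=(\mathbb E[|\cdot|_1^2])^{1/2}$ and yielding $\beta=\beta_0$. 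One small point worth making explicit: the inequality as printed holds only for $q\in\mathbb W_0$ (mean-zero in $\Omega$, as the Bogovski\u{\i} operator requires), not for all of $\mathbb W$ as the notation $\|q\|_{\mathbb W}$ on the right-hand side might suggest; this is consistent with how the lemma is actually invoked in Lemma \ref{lem2}, where the pressure lies in $\mathbb W_0$.
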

\begin{lemma}[Stability of the solution] \label{lem2} 
	Let $(\textbf u,p)\in(\mathbb V_0, \mathbb W_0)$ be any solution of  \eqref{w_SNS1}, then 
	\begin{gather}
		\|\textbf u\|_{\mathbb V_0}\le \frac{1}{\nu}\mathbb E\left[\left \|\textbf{F}+\sigma\frac{d\textbf W}{d\textbf x}
		\right\|_{H^{-1} (\Omega)}\right], \label{l1}\\
		\| p\|_{\mathbb W_0}\le \frac{2}{\beta}\mathbb E\left[ \left\| \textbf{F}+ \sigma\frac{d\textbf W}{d\textbf x}
		\right\|_{H^{-1}(\Omega)}\right]+  \frac{C_{c_1}}{\beta \nu^2}\mathbb E\left[\left\|\textbf{F}+\sigma\frac{d\textbf W}{d\textbf x}
		\right\|^2_{H^{-1} (\Omega)}\right],\label{l2}
	\end{gather}
	
	where the constant $C_{c_1} > 0$ such that $|c(\textbf u,\textbf u,\textbf w)|\le
	C_{c_1}|\textbf u|_1^2|\textbf w|_1.$
\end{lemma}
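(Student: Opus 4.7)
The plan is to exploit the fact that, for almost every sample $\omega\in\Theta$, the weak form \eqref{w_SNS3} is a deterministic steady-state Navier--Stokes problem driven by the random data $\textbf F+\sigma\frac{d\textbf W}{d\textbf x}(\omega,\cdot)\in H^{-1}(\Omega)$. I would therefore derive pathwise energy and inf-sup estimates using the deterministic toolkit (the bilinear/trilinear bounds \eqref{a_1}--\eqref{c_4} together with the sample-wise version of Lemma \ref{inf-sup condition}) and lift to the stochastic norms by integrating in $\omega$ only at the very end.

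For \eqref{l1}, I would test \eqref{w_SNS3} sample-wise with $\textbf w=\textbf u(\omega)\in\textbf V_{div}$. By \eqref{c_4} the trilinear term $c(\textbf u,\textbf u,\textbf u)$ vanishes and $a(\textbf u,\textbf u)=\nu|\textbf u|_1^2$. Applying the duality bound \eqref{dual} to the right-hand side and cancelling one factor of $|\textbf u|_1$ produces the pathwise estimate
\begin{equation*}
|\textbf u(\omega)|_1 \;\le\; \tfrac{1}{\nu}\left\|\textbf F+\sigma\tfrac{d\textbf W}{d\textbf x}\right\|_{H^{-1}(\Omega)}\quad\text{a.s.,}
\end{equation*}
from which \eqref{l1} follows by taking the expectation.

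For \eqref{l2}, I would rewrite the first line of \eqref{w_SNS1} pathwise (stripping the outer expectations) as an identification of $p$:
\begin{equation*}
b(\textbf w,p) \;=\; \left(\textbf F+\sigma\tfrac{d\textbf W}{d\textbf x},\textbf w\right)-a(\textbf u,\textbf w)-c(\textbf u,\textbf u,\textbf w),\quad \forall\,\textbf w\in H^1_0(\Omega),\text{ a.s.}
\end{equation*}
Bounding each term on the right by \eqref{dual}, \eqref{a_1}, and the postulated nonlinear inequality $|c(\textbf u,\textbf u,\textbf w)|\le C_{c_1}|\textbf u|_1^2|\textbf w|_1$, dividing by $|\textbf w|_1$, and invoking the deterministic inf-sup condition give
\begin{equation*}
\beta\|p(\omega)\|\;\le\;\left\|\textbf F+\sigma\tfrac{d\textbf W}{d\textbf x}\right\|_{H^{-1}(\Omega)}+\nu|\textbf u|_1+C_{c_1}|\textbf u|_1^2\quad\text{a.s.}
\end{equation*}
Substituting the pointwise velocity bound from the previous step turns $\nu|\textbf u|_1$ into $\|\cdot\|_{H^{-1}}$ and $C_{c_1}|\textbf u|_1^2$ into $\tfrac{C_{c_1}}{\nu^2}\|\cdot\|_{H^{-1}}^2$; taking expectation then yields \eqref{l2}.

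The main obstacle will be the nonlinear term in the pressure identity: $c(\textbf u,\textbf u,\textbf w)$ cannot be controlled by the forcing alone, so it is essential to carry out the inf-sup step first and only then substitute the velocity estimate, which is exactly what produces the quadratic $\|\cdot\|^2_{H^{-1}(\Omega)}$ contribution in \eqref{l2}. Everything else is a careful pathwise application of the bilinear/trilinear bounds and duality, with the expectation layered on as the last step.
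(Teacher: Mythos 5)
Your proposal is correct and follows essentially the same route as the paper: test with the solution itself so that \eqref{c_4} annihilates the trilinear term, use the duality bound \eqref{dual} for the velocity, then combine the inf-sup condition with the velocity estimate and the assumed bound on $c(\textbf u,\textbf u,\textbf w)$ for the pressure; the only cosmetic difference is that you argue pathwise and take expectations at the end, whereas the paper carries the expectation through every step. One caveat, which your write-up shares with the paper's own derivation: averaging the pathwise bound $|\textbf u(\omega)|_1\le\nu^{-1}\left\|\textbf F+\sigma\frac{d\textbf W}{d\textbf x}\right\|_{H^{-1}(\Omega)}$ controls $\mathbb E[|\textbf u|_1]$, while $\|\textbf u\|_{\mathbb V_0}=\left(\mathbb E[|\textbf u|_1^2]\right)^{1/2}$ is a second moment, so strictly one should square before taking the expectation, which replaces $\mathbb E\left[\|\cdot\|_{H^{-1}(\Omega)}\right]$ by $\left(\mathbb E\left[\|\cdot\|^2_{H^{-1}(\Omega)}\right]\right)^{1/2}$ on the right-hand sides of \eqref{l1} and \eqref{l2}.
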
	
\begin{proof}  we choose test function $(\textbf{w}, q)= (\textbf{u}, p)$ in \eqref{w_SNS2}
	\begin{equation}\label{16}
		\mathbb E\left[a(\textbf u,\textbf u)\right]+\mathbb E\left[c(\textbf u,\textbf u,\textbf u)\right] =\mathbb E\left[\left(\textbf{F} +\sigma\frac{d\textbf W}{d\textbf x}, \textbf{u}\right)\right], \nonumber
	\end{equation}
	
	using \eqref{c_4}, we can write
	\begin{equation}\label{17}
		\nu \|\textbf u\|^2_{\mathbb V_0}= \mathbb E\left[a(\textbf u,\textbf u)\right] = \mathbb E\left[\left(\textbf{F}+\sigma\frac{d\textbf W}{d\textbf x},\textbf{u} \right)\right]. \nonumber
	\end{equation}
	Moreover, by \eqref{dual}, we obtain the following relation
	\begin{equation}\label{18}
		\nu \|\textbf u\|^2_{\mathbb V_0}\le \mathbb E\left[\left \|\textbf{F}+\sigma\frac{d\textbf W}{d\textbf x}
		\right\|_{H^{-1} (\Omega)}\right]\|\textbf u\|_{\mathbb V_0}.\nonumber
	\end{equation}
	Thus, inequality \eqref{l1} holds. According to the inf-sup condition and \eqref{dual}, one obtains for the pressure 
	\begin{equation}
		\begin{split}
			\| p\|_{\mathbb W_0}&\le  \frac{1}{\beta}\sup_{0\neq\textbf w\in \mathbb V_0}\frac{\mathbb E[b(\textbf{w}, p)]}{\|\textbf w\|_{\mathbb V_0}} \\
			&=\frac{1}{\beta}\sup_{0\neq\textbf w\in \mathbb V_0}\frac{\mathbb E\left[(\textbf{F}+\sigma\frac{d\textbf W}{d \textbf{x}}, \textbf{w})\right]-\mathbb E[a(\textbf u,\textbf w)]-\mathbb E[c(\textbf u, \textbf u,\textbf w)]}{\|\textbf w\|_{\mathbb V_0}} \nonumber,
		\end{split}
	\end{equation}
	namely,
	\begin{equation}
		\begin{split}
			\| p\|_{\mathbb W_0}\le  \frac{1}{\beta} \left(\mathbb E\left[\left \|\textbf{F}+\sigma\frac{d\textbf W}{d\textbf x}
			\right\|_{ H^{-1} (\Omega)} \right]+\nu\|\textbf u\|_{\mathbb V_0}+C_{c_1}\|\textbf u\|^2_{\mathbb V_0}\right).\nonumber
		\end{split}
	\end{equation}
	Substitute \eqref{l1} into the inequality above and it is evident that inequality \eqref{l2} holds.
\end{proof}

The next theorem estabilishes the existence and uniqueness of the solution.

\begin{theorem}[The existence and uniqueness theorem of the solution for small data] \label{thm1}
	
	Let $\Omega \subset \mathbb R^2$ be a bounded domain with Lipschitz boundary $\partial \Omega $ and the random body force $\textbf{F}+\sigma\frac{d\textbf W}{d\textbf x} \in L^2 \left(\Theta; H^{-1}(\Omega)\right), $ if
	\begin{equation}\label{th0}
		\begin{split}
			\left(\|\textbf{F}\|^2_{L^2(\Omega)}+\left\|\sigma\frac{d\textbf W}{d\textbf x}\right\|^2_{\mathbb W}\right)^\frac{1}{2}\textless \frac{\nu ^2}{\Hat{C}},
		\end{split}
	\end{equation}
	where  $\Hat{C} = C_{c_2}C_\Omega,$  $C_{c_2} > 0$ such that $c(\textbf{v}_1-\textbf{v}_2,\textbf{u}_1, \textbf{u}_1-\textbf{u}_2)\le
	C_{c_2}|\textbf{v}_1-\textbf{v}_2|_1|\textbf{u}_1|_1|\textbf{u}_1-\textbf{u}_2|_1$  and  the constant $C_\Omega > 0$ such that $\|\textbf{u}\| \le C_\Omega|\textbf{u}|_1$, then there exists a unique pair of stochastic processes $(\textbf u, p)\in (\mathbb V_0, \mathbb W_0)$  satisfying equations\eqref{w_SNS1} a.s..
\end{theorem}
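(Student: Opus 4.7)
The plan is to exploit the additive nature of the noise and treat equation \eqref{w_SNS3} pathwise: for each fixed $\omega\in\Theta$, the problem reduces to a deterministic steady-state Navier--Stokes problem on $\textbf{V}_{div}$ with right-hand side in $H^{-1}(\Omega)$. I would first establish existence and uniqueness of a deterministic weak velocity $\textbf{u}(\omega,\cdot)\in \textbf{V}_{div}$ for a.e.\ $\omega$, then recover the associated pressure $p(\omega,\cdot)\in L^2_0(\Omega)$ via the inf-sup condition of Lemma \ref{inf-sup condition}, and finally integrate the pathwise stability bounds \eqref{l1}--\eqref{l2} in $\omega$ to confirm that $(\textbf{u},p)\in(\mathbb V_0,\mathbb W_0)$.

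For the deterministic existence step I would use a Galerkin construction. Choose an increasing sequence of finite-dimensional subspaces $V_N\subset \textbf{V}_{div}$ whose union is dense. The Galerkin identity defines a continuous self-map $\Phi_N$ on $V_N$ via $(\Phi_N(\textbf{u}_N),\textbf{w})=a(\textbf{u}_N,\textbf{w})+c(\textbf{u}_N,\textbf{u}_N,\textbf{w})-(\textbf{F}+\sigma\tfrac{d\textbf{W}}{d\textbf{x}},\textbf{w})$. Property \eqref{c_4} together with the a priori estimate from Lemma \ref{lem2} yields $(\Phi_N(\textbf{v}),\textbf{v})\ge 0$ on the sphere of radius $R:=\tfrac{1}{\nu}\|\textbf{F}+\sigma\tfrac{d\textbf{W}}{d\textbf{x}}\|_{H^{-1}(\Omega)}$, so Brouwer's fixed-point theorem produces a Galerkin solution $\textbf{u}_N$ inside the closed ball of radius $R$. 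The bound is uniform in $N$, and in two space dimensions the compact embedding $H^1_0(\Omega)\hookrightarrow L^4(\Omega)$ allows passage to the limit in the trilinear term, yielding a weak solution $\textbf{u}$.

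Uniqueness is where the smallness hypothesis \eqref{th0} enters. Given two solutions $(\textbf{u}_1,p_1)$ and $(\textbf{u}_2,p_2)$, set $\textbf{e}=\textbf{u}_1-\textbf{u}_2$. Subtracting the identities, splitting $c(\textbf{u}_1,\textbf{u}_1,\textbf{e})-c(\textbf{u}_2,\textbf{u}_2,\textbf{e})=c(\textbf{e},\textbf{u}_1,\textbf{e})+c(\textbf{u}_2,\textbf{e},\textbf{e})$, and using \eqref{c_4} to kill the second term produces $\nu|\textbf{e}|_1^2=-c(\textbf{e},\textbf{u}_1,\textbf{e})$. The hypothesis on $C_{c_2}$ together with \eqref{l1} and \eqref{th0} gives a coefficient $\nu-\tfrac{\Hat{C}}{\nu}\|\textbf{F}+\sigma\tfrac{d\textbf{W}}{d\textbf{x}}\|_{H^{-1}(\Omega)}$ that is strictly positive, forcing $\textbf{e}\equiv 0$. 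Uniqueness of $p$ then follows from Lemma \ref{inf-sup condition} applied to $p_1-p_2$.

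The main obstacle I anticipate is not any single deterministic step but the reconciliation of this sample-wise construction with the probabilistic framework $(\mathbb V_0,\mathbb W_0)$ in which the theorem is phrased. Two points require care. First, measurability of $\omega\mapsto\textbf{u}(\omega,\cdot)$: this should follow from the Lipschitz continuity of the solution-to-data map guaranteed by the uniqueness argument, combined with the measurability of $\textbf{F}+\sigma\tfrac{d\textbf{W}}{d\textbf{x}}$ into $H^{-1}(\Omega)$, after which integrating \eqref{l1}, \eqref{l2} in $\omega$ gives $L^2(\Theta;\cdot)$-integrability. Second, \eqref{th0} controls the $L^2(\Theta)$-norm of the data, whereas the pathwise uniqueness bound ostensibly requires smallness in $H^{-1}(\Omega)$ for a.e.\ $\omega$; the cleanest way to close this gap is to redo the uniqueness test in expectation using \eqref{c_5} in place of \eqref{c_2}, so that only the $\mathbb W$-norm of the forcing (which is what \eqref{th0} bounds) appears when concluding $\|\textbf{e}\|_{\mathbb V_0}=0$.
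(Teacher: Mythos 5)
Your proposal is correct in its deterministic core but takes a genuinely different route from the paper. You follow the classical Temam strategy: Galerkin approximation plus Brouwer's fixed-point theorem for existence (which needs no smallness), then an energy argument testing the difference of two solutions against itself, with \eqref{c_4} killing one trilinear term and the smallness condition \eqref{th0} absorbing the other, for uniqueness. The paper instead works entirely in expectation from the start on the divergence-free formulation \eqref{w_SNS3}: for a fixed $\textbf v\in\mathbb V_{div}$ it defines the linearized (Oseen-type) form $A_v(\textbf u,\textbf w)=\mathbb E[a(\textbf u,\textbf w)+c(\textbf v,\textbf u,\textbf w)]$, which is coercive by \eqref{c_4}, invokes Lax--Milgram to define a solution map $\pi:\textbf v\mapsto\textbf u$, and shows $\pi$ is a contraction precisely under \eqref{th0}; Banach's fixed-point theorem then delivers existence and uniqueness simultaneously. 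Your approach buys existence for arbitrary data and isolates the role of \eqref{th0} as a pure uniqueness condition; the paper's approach avoids the compactness and limit-passage step and, more importantly, never argues pathwise, so it never confronts the measurability and pathwise-smallness issues you rightly flag. That flag is the substantive point: \eqref{th0} bounds only the $L^2(\Theta)$-averaged size of the forcing, so for Gaussian noise the data is not small for a.e.\ $\omega$ and your pathwise uniqueness coefficient $\nu-\tfrac{\hat C}{\nu}\left\|\textbf F+\sigma\tfrac{d\textbf W}{d\textbf x}\right\|_{H^{-1}(\Omega)}$ can be negative on a set of positive measure. Your proposed repair (rerun the uniqueness test in expectation) is the right instinct and is essentially what the paper's contraction estimate does, but as stated it does not quite close: applying \eqref{c_5} to $\mathbb E[c(\textbf e,\textbf u_1,\textbf e)]$ leaves the mixed norm $\|\textbf e^T\textbf u_1\|_{\mathbb W}$, which is not controlled by $\|\textbf e\|_{\mathbb V_0}\|\textbf u_1\|_{\mathbb V_0}$ without an additional integrability or independence assumption. (The paper's own contraction step makes an analogous factorization of an expectation of a product into a product of expectations, so on this point your argument is no less rigorous than the published one; but if you keep the pathwise route you should state explicitly what higher moment or uniform bound on the noise you are assuming.)
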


\begin{proof}Since equation \eqref{w_SNS3} is equivalent to \eqref{w_SNS1}, so the equivalent problem \eqref{w_SNS3} in the divergence-free subspace $\mathbb V_{div}$ is considered. 
	
	Define the bilinear form
	\begin{equation}\label{26}
		\begin{split}
			A_v(\textbf u,\textbf w) := \mathbb E[a(\textbf u,\textbf w)+c(\textbf{v},\textbf u,\textbf w)] ,\nonumber
		\end{split}
	\end{equation}
	for given $\textbf{v}\in \mathbb V_{div}$. 
	
	Clearly, we  obtain
	\begin{equation}\label{th1}
		|A_v(\textbf u,\textbf w)| \le C_v(\|\textbf u\|^2_{\mathbb V_0})^\frac{1}{2}(\|\textbf w\|^2_{\mathbb V_0})^\frac{1}{2} , 
	\end{equation}
	where $C_v$ is a constant related to $\textbf{v}.$
	$$
	A_v(\textbf u,\textbf u) =\mathbb E[a(\textbf u,\textbf u)] = \nu \|\textbf u\|^2_{\mathbb V_0} ,
	$$
	so that $A_v(\textbf u,\textbf w)$ is coercive. Moreover, 
	\begin{equation}\label{F}
		\left\|\textbf{F}+\sigma \frac{d\textbf W}{d\textbf x}\right\|^2_{\mathbb W} \le 2\left(\|\textbf{F}\|^2+\left\|\sigma \frac{d\textbf W}{d\textbf x}\right\|^2_{\mathbb W}\right),
	\end{equation}
	so that $\mathbb E\left(\textbf{F}+\sigma\frac{d\textbf W}{d\textbf x}, \cdot\right)$ is a continuous linear functional on $\mathbb V_{div}$. Therefore, by Lax-Milgram theorem, for any given $\textbf{v}\in \mathbb V_{div},$ there exists a unique stochastic processes $\textbf{u}\in \mathbb V_{div}$  such that
	\begin{equation}\label{th2}
		\begin{split}
			A_v(\textbf u,\textbf w) = \mathbb E\left(\textbf{F}+\sigma\frac{d\textbf W}{d\textbf x}, \textbf{w}\right), \quad \forall \textbf w\in \mathbb V_{div} ,
		\end{split}
	\end{equation}
	that is to say
	\begin{equation}
		\begin{split}
			a(\textbf u,\textbf w)+c(\textbf{v},\textbf u,\textbf w) = \left(\textbf{F}+\sigma\frac{d\textbf W}{d\textbf x}, \textbf{w}\right), a.s., \quad \forall \textbf w\in \mathbb V_{div}.\nonumber
		\end{split}
	\end{equation}
	Define the nonlinear operator $\pi: \textbf{v} \longrightarrow \textbf{u}$ through \eqref{th2}, it will be shown that $\pi$ is a contraction mapping. For any $\textbf{v}_1, \textbf{v}_2\in \mathbb V_{div}$, there exists a $\gamma \in(0,1) $ such that 
	\begin{equation}
		\begin{split}
			\left\|\pi(\textbf{v}_1)-\pi(\textbf{v}_2) \right\|_{\mathbb V_0} = \left\|\textbf{u}_1-\textbf{u}_2\right\|_{\mathbb V_0} \le \gamma \left\|\textbf{v}_1-\textbf{v}_2\right\|_{\mathbb V_0} , \nonumber
		\end{split}
	\end{equation}
	where $\pi(\textbf{v}_1) = \textbf{u}_1, \pi(\textbf{v}_2) = \textbf{u}_2$, so there exists a unique stochastic process $\textbf{v}\in \mathbb V_{div}$  such that $\textbf{v} = \textbf{u}  \quad \mu-a.e..$ 
	
	Indeed, taking $\textbf{u}_1,\textbf{u}_2$ to satisfy the equation \eqref{th2}, we obtain
	\begin{equation}
		\mathbb E[a(\textbf{u}_1-\textbf{u}_2, \textbf w )]+\mathbb E[c(\textbf{v}_1-\textbf{v}_2,
		\textbf{u}_1,\textbf w)]+\mathbb E[c(\textbf{v}_2,
		\textbf{u}_1-\textbf{u}_2,\textbf w)]= 0.\nonumber
	\end{equation}
	Let $\textbf{w}=\textbf{u}_1-\textbf{u}_2$, the last term on the left-hand side vanishes because of \eqref{c_4} and one obtains with \eqref{c_2} and H\"older inequality
	\begin{align}
		\nu \left\|\textbf{u}_1-\textbf{u}_2\right\|^2_{\mathbb V_0} &\le \left | \mathbb E[c(\textbf{v}_1-\textbf{v}_2,
		\textbf{u}_1,\textbf{u}_1-\textbf{u}_2)]\right|  \nonumber\\
		&\le C_{c_2}\mathbb E[|\textbf{v}_1-\textbf{v}_2|_1|\textbf{u}_1|_1|\textbf{u}_1-\textbf{u}_2|_1] \nonumber \\
		&\le C_{c_2}\left(\left\|\textbf{v}_1-\textbf{v}_2\right\|^2_{\mathbb V_0}\left\|\textbf{u}_1\right\|^2_{\mathbb V_0}\right)^\frac{1}{2}\left\|\textbf{u}_1-\textbf{u}_2\right\|_{\mathbb V_0}.  \nonumber
	\end{align}
	Thus it follows from inqualities \eqref{l1} and  \eqref{F}
	\begin{equation}
		\begin{split}
			\left\|\textbf{u}_1-\textbf{u}_2\right\|_{\mathbb V_0} 
			\le \frac{C_{c_2}C_\Omega}{\nu ^2}\left(\|\textbf{F}\|^2+\left\|\sigma \frac{d\textbf W}{d\textbf x}\right\|^2_{\mathbb W}\right)^\frac{1}{2}\left\|\textbf{v}_1-\textbf{v}_2\right\|_{\mathbb V_0}.
		\end{split}
	\end{equation}
	Naturally, if the following conditions are met, then $\pi$ is a contraction mapping
	\begin{equation}
		\frac{C_{c_2}C_\Omega}{\nu ^2}\left(\|\textbf{F}
		\|^2+\left\|\sigma \frac{d\textbf W}{d\textbf x}\right\|^2_{\mathbb W}\right)^\frac{1}{2}\textless 1.
	\end{equation}
	Obviously, inequality\eqref{th0} holds. According to the principle of contraction mapping, the existence and uniqueness of the solution are proved.
\end{proof}

\section{Splitting method for  the Stochastic flow}\label{sec3}

In this section, we develop and analyze the splitting approach for resolving the  steady-state SNS system. The weak Galerkin formulation of the splitting method has been preliminarily studied. Following this, a thorough exposition of the benefits associated with the splitting method is presented, showcasing specific focus on the origins of theoretical errors and properties of the solutions, such as stability, existence, and uniqueness conditions. Additionally, we ponder on the scenario of disregarding the nonlinear term in the splitting method under particular conditions and suggest a revised format for splitting. We then proceed to scrutinize the properties of the solution generated.

Similar to Section $\bm{2.2}$, the weak formulations of  equations\eqref{SSNS1} and\eqref{SSNS2} are presented as follows: find $\bm{\eta}\in \mathbb V_0,\bm{\xi}\in H^1_0(\Omega) , p_1,p_2\in\mathbb W_0$ such that
\begin{equation}\label{w_sSNS1}
	\begin{split}
		\mathbb E[a(\bm{\xi},\textbf v)]+\mathbb E[c(\bm{\xi},\bm{\xi},\textbf v)] +\mathbb E[b(\textbf{v}, p_1)] &= \mathbb E\left[\left(\textbf F, \textbf{v}\right)\right],\\
		\mathbb E[b(\bm{\xi}, q)]&= 0,
	\end{split}
\end{equation}
and
\begin{equation}\label{w_sSNS2}
	\begin{split}
		\mathbb E[a(\bm{\eta},\textbf{v})]+\mathbb E[c(\bm{\eta}, \bm{\eta}, \textbf{v})] +\mathbb E[c(\bm{\eta},\bm{\xi},\textbf v)]+\mathbb E[c(\bm{\xi}, \bm{\eta},\textbf{v})]+&\mathbb E[b(\textbf{v}, p_2)] = \mathbb E\left[\left(\sigma\frac{d\textbf W}{d\textbf x}, \textbf{v}\right)\right],\\
		&\mathbb E[b(\bm{\eta}, q)]= 0,
	\end{split}
\end{equation}

for all test function $(\textbf{v},q)\in(\mathbb V_0, \mathbb W_0)$.

\begin{remark}[Variations of weak format]
	
	The problem \eqref{w_sSNS1}-\eqref{w_sSNS2} can be equivalently expressed in the following manner
	$$
	a(\bm{\xi},\textbf v)+c(\bm{\xi},\bm{\xi},\textbf v) +b(\textbf{v}, p_1)-b(\bm{\xi}, q)= (\textbf F, \textbf v), a.s. ,
	$$
	$$
	a(\bm{\eta},\textbf{v})+c(\bm{\eta}, \bm{\eta}, \textbf{v})+c(\bm{\eta},\bm{\xi},\textbf v)+c(\bm{\xi}, \bm{\eta},\textbf{v})+b(\textbf{v}, p_2)-b(\bm{\eta}, q)= \left(\sigma\frac{d\textbf W}{d\textbf x}, \textbf{v}\right), a.s..\\
	$$
	
	Note that the following weak formulation is equivalent to \eqref{w_sSNS1}-\eqref{w_sSNS2}: find $\bm{\eta},\bm{\xi}\in \mathbb V_{div}$ such that
	\begin{gather}
		a(\bm{\xi},\textbf v)+c(\bm{\xi},\bm{\xi},\textbf v) = (\textbf F, \textbf v), a.s. ,\label{w_sSNS3}\\
		a(\bm{\eta},\textbf{v})+c(\bm{\eta}, \bm{\eta}, \textbf{v})+c(\bm{\eta},\bm{\xi},\textbf v)+c(\bm{\xi}, \bm{\eta},\textbf{v}) = \left(\sigma\frac{d\textbf W}{d\textbf x}, \textbf{v}\right), a.s. .\label{w_sSNS4}
	\end{gather}
\end{remark}
\subsection{Properties of splitting method }
\begin{theorem}[Equivalence]\label{Equivalence}
	The solution of equation \eqref{SNS} is equivalent to the sum of the solutions of equations \eqref{SSNS1} and \eqref{SSNS2}  of the splitting method.
\end{theorem}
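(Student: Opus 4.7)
The plan is to prove equivalence by direct algebraic substitution in both directions, relying on the bilinearity of the convection operator $(\mathbf{u}\cdot\nabla)\mathbf{v}$ and the linearity of $-\nu\Delta$, $\nabla\cdot$, and $\nabla$. Concretely, I would verify that $(\mathbf{u},p)=(\boldsymbol{\xi}+\boldsymbol{\eta},\,p_1+p_2)$ is a solution of \eqref{SNS} whenever $(\boldsymbol{\xi},p_1)$ and $(\boldsymbol{\eta},p_2)$ solve \eqref{SSNS1} and \eqref{SSNS2}, and conversely that every solution of \eqref{SNS} decomposes in this form.

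For the forward direction, after setting $\mathbf{u}:=\boldsymbol{\xi}+\boldsymbol{\eta}$ and $p:=p_1+p_2$, adding the two momentum equations termwise reduces the task to the identity
\[
(\mathbf{u}\cdot\nabla)\mathbf{u} = (\boldsymbol{\xi}\cdot\nabla)\boldsymbol{\xi}+(\boldsymbol{\eta}\cdot\nabla)\boldsymbol{\eta}+(\boldsymbol{\eta}\cdot\nabla)\boldsymbol{\xi}+(\boldsymbol{\xi}\cdot\nabla)\boldsymbol{\eta},
\]
which is an immediate consequence of the bilinearity of the convection term and matches exactly the four nonlinear contributions appearing in the sum of \eqref{SSNS1} and \eqref{SSNS2}. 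The incompressibility $\nabla\cdot\mathbf{u}=0$ and the homogeneous Dirichlet condition $\mathbf{u}|_{\partial\Omega}=\mathbf{0}$ then follow by adding the analogous constraints of the two sub-problems. For the reverse direction, I would first invoke classical existence for the deterministic steady Navier--Stokes system to produce a solution $(\boldsymbol{\xi},p_1)$ of \eqref{SSNS1}, then define $\boldsymbol{\eta}:=\mathbf{u}-\boldsymbol{\xi}$ and $p_2:=p-p_1$. Subtracting \eqref{SSNS1} from \eqref{SNS} and expanding $(\mathbf{u}\cdot\nabla)\mathbf{u}-(\boldsymbol{\xi}\cdot\nabla)\boldsymbol{\xi}$ via the same bilinearity identity with $\mathbf{u}=\boldsymbol{\xi}+\boldsymbol{\eta}$ reproduces precisely the momentum equation of \eqref{SSNS2}; the divergence-free condition and the boundary condition for $\boldsymbol{\eta}$ again follow by subtraction.

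The main obstacle here is conceptual rather than computational, since \eqref{SNS} need not admit a unique solution in general, so the word \emph{equivalent} must be interpreted with care. In the small-data regime of Theorem~\ref{thm1} (together with the analogous uniqueness regime for the deterministic system \eqref{SSNS1}) the pair $(\boldsymbol{\xi},\boldsymbol{\eta})$ is uniquely determined by $\mathbf{u}$, and the correspondence $\mathbf{u}\leftrightarrow(\boldsymbol{\xi},\boldsymbol{\eta})$ becomes bijective; outside this regime, the statement should be read as saying that every solution of \eqref{SNS} admits at least one such splitting and that every splitting recombines into a solution of \eqref{SNS}. I would also explicitly record the structural feature that $\boldsymbol{\xi}$ is deterministic while $\boldsymbol{\eta}\in\mathbb{V}_{div}$ absorbs the entire stochastic content of the noise $\sigma\, d\mathbf{W}/d\mathbf{x}$, since this observation underlies the computational advantage of not re-solving the deterministic sub-problem across Monte Carlo samples, as emphasised in the introduction.
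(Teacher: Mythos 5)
Your proof is correct, but it takes a genuinely different route from the paper's. The paper argues abstractly: it writes the three systems as operator equations $L[\textbf{u}]=0$, $L_1[\bm{\xi}]=0$, $L_2[\bm{\eta}]=0$, posits an isomorphism $T$ with $\textbf{u}=T_1[\bm{\xi}]+T_2[\bm{\eta}]$, and concludes from the assumed identities $LT_1=L_1$, $LT_2=L_2$ together with the step $L[T_1[\bm{\xi}]+T_2[\bm{\eta}]]=LT_1[\bm{\xi}]+LT_2[\bm{\eta}]$ — a step that tacitly presupposes additivity of $L$, which fails for the nonlinear Navier--Stokes operator. Your argument instead does the concrete computation that actually makes the splitting work: expanding $((\bm{\xi}+\bm{\eta})\cdot\nabla)(\bm{\xi}+\bm{\eta})$ by bilinearity produces exactly the four convection terms distributed between \eqref{SSNS1} and \eqref{SSNS2}, so the cross terms $(\bm{\eta}\cdot\nabla)\bm{\xi}+(\bm{\xi}\cdot\nabla)\bm{\eta}$ built into \eqref{SSNS2} are precisely what compensates for the non-additivity of the convection operator. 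Your treatment of the converse (solve \eqref{SSNS1} first, then define $\bm{\eta}:=\textbf{u}-\bm{\xi}$ and $p_2:=p-p_1$ and check \eqref{SSNS2} by subtraction) and your remark that ``equivalence'' requires the small-data uniqueness regime to become a genuine bijection are both points the paper leaves implicit. In short, your version is more elementary, more explicit about where bilinearity enters, and closes a logical gap in the paper's operator-theoretic formulation; the paper's version is shorter but only at the cost of assumptions ($LT_i=L_i$, additivity of $L$) that are never verified.
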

\begin{proof}  First, equations \eqref{SNS}, \eqref{SSNS1}, and \eqref{SSNS2} can be written as: 
	$$A: L[\textbf{u}]=0,\quad B: L_1[\bm{\xi}]=0,\quad C: L_2[\bm{\eta}]=0,$$
	respectively. Here, $L,L_1,L_2$ are operators corresponding to the respective equations, which map functions to zero.
	
	Suppose there exists an isomorphic transformation $T$ that relates $\bm{\xi}+\bm \eta$ to $\textbf{u}$ such that: $\textbf{u}=T[\bm{\xi}+\bm \eta]$, i.e. $\textbf{u}=T[\bm{\xi}+\bm \eta]=T_1[\bm{\xi}]+T_2[\bm \eta]$. Here, $T$ is a linear operator that transforms the combined random variables into the solution $\textbf{u}$, and $T_1$ and $T_2$ are its respective components acting on $\bm{\xi}$ and $\bm \eta$.
	
	Let us prove that $\textbf{u}=T[\bm{\xi}+\bm \eta]$ satisfies equation $A$.
	\begin{equation}
		\begin{split}
			L[\textbf{u}]=L[T[\bm{\xi}+\bm \eta]]=L[T_1[\bm{\xi}]+T_2[\bm \eta]]=LT_1[\bm{\xi}]+LT_2[\bm \eta],\nonumber
		\end{split}
	\end{equation}
	when $LT_1=L_1$ and $LT_2=L_2$, then $ L[\textbf{u}]=0$.
	
	Next, consider that when $A$ is true, $B$ and $C$ are satisfied.
	\begin{equation}
		\begin{split}
			L_1[\bm{\xi}]=L_1\left [T_1^{-1}\left[\textbf{u}-T_2[\bm \eta]\right]\right ]=L_1T_1^{-1}[\textbf{u}]-L_1T_1^{-1}T_2[\bm \eta],\\
			L_2[\bm \eta]=L_2\left [T_2^{-1}\left[\textbf{u}-T_1[\bm{\xi}]\right]\right ]=L_2T_2^{-1}[\textbf{u}]-L_2T_2^{-1}T_1[\bm{\xi}], \nonumber
		\end{split}
	\end{equation}
	where $T^{-1}$ denotes the inverse of the transformation $T$. When $LT_1=L_1$ and $LT_2=L_2$, then one of $L_1[\bm{\xi}], L_2[\bm{\eta}]$  is zero, the other can only be zero.
	
	In summary, the solutions are equivalent.\end{proof}

To further illustrate the source of error in statistical solutions, we first consider the relationship between the expectation of deterministic solution $\bm{\xi}$ in splitting methods and the expectation of the original equation $\textbf u.$

\begin{remark}\label{remark3_2}
	Assume there exists a constant $C_{c_\alpha}> 0$ satisfying the trilinear constraint $c(\textbf u_\alpha,\textbf u_\alpha,\textbf u_0 -\bm{\xi})\le C_{c_\alpha}|\textbf u_\alpha|_1^2|\textbf u_0 -\bm{\xi}|_1$, and a  constant $C_\Omega > 0$ satisfying the Poincaré inequality $\|\textbf{u}\| \le C_\Omega|\textbf{u}|_1$.  If the parameters satisfy 
	\begin{equation}
		\frac{C_\Omega}{\nu} \max_{\alpha \geq 1} \{C_{c_\alpha}\}\cdot \text{Var}(\textbf{u})\ll 1,
	\end{equation}
	then the expected value of the deterministic part $\bm{\xi}$ of the splitting method is very close to the expected value of the original equation \textbf{u}, i.e. $\mathbb E[\textbf{u}]$. Here, $\alpha$ is an integer representing the order of the expansion terms, and $\textbf u_\alpha$ are the coefficients corresponding to these higher-order terms, which capture the stochastic nature of the solution.
\end{remark}

\begin{proof} 
	Considering $\mathbb E\left[\sigma\frac{d\textbf W}{d\textbf x}\right]= 0$, according to equation \eqref{w_SNS1}, we have
	\begin{equation}
		\begin{split}
			a(\mathbb E\textbf{u},\textbf{v})+\mathbb E[c(\textbf{u},\textbf{u},\textbf{v})]+\mathbb E[(b(\textbf{v},p)]=\mathbb E[(\textbf{F},\textbf{v})].\nonumber
		\end{split}
	\end{equation}
	Let $\textbf H_\alpha,\textbf H_\beta$ denote multi-dimensional Hermite polynomials, where $\alpha$ and $\beta$ are integers. The coefficients $\textbf u_\alpha,\textbf u_\beta$ correspond to these higher-order terms, allowing us to express the solution as
	$$
	\textbf{u}=\textbf u_0+\sum_{\alpha \geq 1}{\textbf u_{\alpha}\textbf H_\alpha(w)}.$$
	
	By virtue of orthogonality, we have $\mathbb E[\textbf H_\alpha]=\mathbb E[\textbf H_\alpha \cdot \textbf H_0]=0  $ for $\alpha>1$. It follows that
	$$\textbf u_0 = \mathbb E[\textbf{u}].$$
	Consequently, we can express the expected value as
	$$
	\mathbb E[c(\textbf{u},\textbf{u},\textbf v_1)]= c(\textbf u_0,\textbf u_0,\textbf v_1)+ \sum_{\alpha,\beta \geq 1}c(\textbf u_\alpha,\textbf u_\beta,\textbf v_1)\mathbb E[\textbf H_\alpha \textbf H_\beta],
	$$	
	which can be rewritten as
	$$
	a(\textbf u_0,\textbf{v})
	+c(\textbf u_0,\textbf u_0,\textbf v)+\sum_{\alpha,\beta \geq 1}c(\textbf u_\alpha,\textbf u_\beta,\textbf v)\mathbb E[\textbf H_\alpha \textbf H_\beta]+b(\textbf{v},\mathbb E[p])
	=\mathbb E(\textbf{F},\textbf{v}).
	$$
	
	Besides, the equation for $\bm{\xi}$ in the splitting method is as follows 
	$$
	a(\textbf u_0,\textbf{v})
	+c(\textbf u_0,\textbf u_0,\textbf v)+b(\textbf{v},\mathbb E[p])
	=\mathbb E(\textbf{F},\textbf{v}).
	$$
	
	Based on \eqref{SNS}, \eqref{SSNS2} and \eqref{w_SNS1}, we are able to obtain
	$$
	a(\textbf{u} -\bm{\xi}, \textbf{v})+c(\textbf u, \textbf u,\textbf v)-c(\bm{\xi},\bm{\xi},\textbf v) = \left(\sigma\frac{d\textbf W}{d\textbf x}, \textbf{v}\right).
	$$
	
	Take expectation on both sides 
	$$
	a(\textbf u_0 -\bm{\xi}, \textbf{v})+\sum_{\alpha,\beta \geq 1}c(\textbf u_\alpha,\textbf u_\beta,\textbf v)\mathbb E[\textbf H_\alpha \textbf H_\beta] = 0.
	$$
	
	Therefore
	$$
	|a(\textbf u_0 -\bm{\xi}, \textbf{v})|  \leq \sum_{\alpha,\beta \geq 1}\left|c(\textbf u_\alpha,\textbf u_\beta,\textbf v)\mathbb E[\textbf H_\alpha \textbf H_\beta]\right|
	= \sum_{\alpha \geq 1}\left|c(\textbf u_\alpha,\textbf u_\alpha,\textbf v)\right|.
	$$
	
	Setting $\textbf{v}=\textbf u_0 -\bm{\xi},$
	\begin{equation}
		\begin{split}
			\frac{\nu}{C_\Omega}\|\textbf u_0 -\bm{\xi}\|^2 &\leq \nu|\textbf u_0 -\bm{\xi}|_1^2 = \left|a(\textbf u_0 -\bm{\xi},\textbf u_0 -\bm{\xi})\right| \nonumber\\
			& \leq \sum_{\alpha \geq 1}\left|c(\textbf u_\alpha,\textbf u_\alpha,\textbf u_0 -\bm{\xi})\right|\nonumber \\
			&\leq \sum_{\alpha \geq 1}C_{c_\alpha}\|\textbf u_\alpha\|^2\|\textbf u_0 -\bm{\xi}\|.\nonumber
		\end{split}
	\end{equation}
	Hence, we have
	\begin{equation}
		\begin{split}
			\frac{\nu}{C_\Omega}\|\textbf u_0 -\bm{\xi}\| 
			&\leq \sum_{\alpha \geq 1}C_{c_\alpha}\|\textbf u_\alpha\|^2 \nonumber \\
			&\leq \max_{\alpha \geq 1}\{C_{c_\alpha}\}\left(\mathbb E[\textbf{u}]^2-(\mathbb E{[\textbf{u}]})^2\right)=\max_{\alpha \geq 1}\{C_{c_\alpha}\}\text{Var}(\textbf{u}).\nonumber
		\end{split}
	\end{equation}
	That is to say,
	\begin{equation}
		\begin{split}
			\|\textbf u_0 -\bm{\xi}\| 
			\leq \frac{C_\Omega }{\nu}\max_{\alpha \geq 1}\{C_{c_\alpha}\}\text{Var}(\textbf{u}). \nonumber
		\end{split}
	\end{equation}
	Thus, we can conclude that if the right side of the inequality is very small, the expected value of the deterministic part $\bm{\xi}$ of the splitting method is very close to the expected value of the original equation $\textbf{u}$.  
\end{proof}


Next, we will analyze the advantages of the splitting method from the perspective of stability, existence and uniqueness of the solution.

\begin{lemma}[Stability of the splitting-solution] \label{lem3} 
	
	1) Let $(\bm{\xi},p_1)\in(\textbf{V}_{div}, \textbf{W}_{div})$ be any solution of  \eqref{SSNS2}, then 
	\begin{gather}
		|\bm{\xi}|_1\le \frac{1}{\nu}\|\textbf{F}\|
		_{ H^{-1} (\Omega)}, \label{l3}\\
		\|p_1\|_{\mathbb W}\le \frac{1}{\beta_1}\left(2\|\textbf{F}
		\|_{H^{-1} (\Omega)} +\frac{C_{c_3}}{\nu^2}\|\textbf{F}
		\|^2_{H^{-1} (\Omega)}\right),\label{l4}
	\end{gather}
	where the constant $C_{c_3} > 0$ such that $c(\bm{\xi},\bm{\xi},\textbf v)\le
	C_{c_3}|\bm{\xi}|_1^2|\textbf v|_1 .$
	
	2) Let $\bm{\eta} \in \mathbb V_{0}$ be any solution of  \eqref{SSNS2} and $\textbf{v}$ satisfies inequality \eqref{l3}-\eqref{l4}, then 
	\begin{gather}
		\|\bm{\eta}\|_{\mathbb V_0}  \le N_{\bm{\eta}}, \label{l5}\\
		\| p_2\|_{\mathbb W_0}\le  \frac{1}{\beta} \left(\mathbb E\left[\left \|\sigma\frac{d\textbf W}{d\textbf x}
		\right\|_{ H^{-1} (\Omega)} \right]+C_{c_1}N_{\bm{\eta}}^2+\left(\nu+\frac{C_{c_1}}{\nu}\|\textbf{F}\|
		_{ H^{-1} (\Omega)}\right)N_{\bm{\eta}}\right),\label{l6}
	\end{gather}
	where  	$N_{\bm{\eta}} = \displaystyle{\mathbb E\left[ \left\|\sigma\frac{d\textbf{W}}{d\textbf x}\right\|_{H^{-1}(\Omega)}\right]}\Big/\left({\nu- C_{c_4}\frac{\|\textbf{F}\|_{H^{-1}(\Omega)}}{\nu}}\right)$, the constant $ C_{c_4}>0 $ such that  $\mathbb E[c(\bm\eta,\bm{\xi},\bm \eta)]\le C_{c_4}\|\bm{\eta}\|_{\mathbb V_0}^2 \|\bm{\xi}\|_{\mathbb V_0}$.
\end{lemma}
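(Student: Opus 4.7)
The plan is to proceed in two stages following the statement. For Part 1, I would test the weak form \eqref{w_sSNS1} with $\textbf{v}=\bm{\xi}$ and exploit the antisymmetry $c(\bm{\xi},\bm{\xi},\bm{\xi})=0$ from \eqref{c_4}, together with the divergence-free condition on $\bm{\xi}$ which kills the pressure term via $b(\bm{\xi},p_1)=0$. This collapses the equation to $\nu|\bm{\xi}|_1^2=(\textbf{F},\bm{\xi})$, and the duality inequality \eqref{dual} immediately yields \eqref{l3}. For \eqref{l4}, I would rearrange the momentum equation as $b(\textbf{v},p_1)=(\textbf{F},\textbf{v})-a(\bm{\xi},\textbf{v})-c(\bm{\xi},\bm{\xi},\textbf{v})$, divide by $|\textbf{v}|_1$, apply the deterministic inf-sup condition with constant $\beta_1$ (analogous to Lemma~\ref{inf-sup condition}), and substitute \eqref{a_1}, the hypothesized $C_{c_3}$-bound on the trilinear term, and \eqref{l3}. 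The coefficient $2$ in front of $\|\textbf{F}\|_{H^{-1}}$ in \eqref{l4} comes from combining the source and diffusion terms after using $\nu|\bm{\xi}|_1\le\|\textbf{F}\|_{H^{-1}}$, while the quadratic term supplies the $C_{c_3}/\nu^2$ contribution.

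For Part 2, I would test \eqref{w_sSNS4} with $\textbf{v}=\bm{\eta}$. Two of the four trilinear terms vanish: $c(\bm{\eta},\bm{\eta},\bm{\eta})=0$ and $c(\bm{\xi},\bm{\eta},\bm{\eta})=0$ by \eqref{c_4}, while the cross term $c(\bm{\eta},\bm{\xi},\bm{\eta})$ survives. After taking expectations, using the hypothesized bound $\mathbb{E}[c(\bm{\eta},\bm{\xi},\bm{\eta})]\le C_{c_4}\|\bm{\eta}\|_{\mathbb V_0}^2\|\bm{\xi}\|_{\mathbb V_0}$, and replacing $\|\bm{\xi}\|_{\mathbb V_0}$ by its deterministic bound $\tfrac{1}{\nu}\|\textbf{F}\|_{H^{-1}(\Omega)}$ from \eqref{l3}, I reach
\begin{equation*}
\left(\nu-C_{c_4}\frac{\|\textbf{F}\|_{H^{-1}(\Omega)}}{\nu}\right)\|\bm{\eta}\|_{\mathbb V_0}^2 \le \mathbb{E}\!\left[\left\|\sigma\frac{d\textbf{W}}{d\textbf{x}}\right\|_{H^{-1}(\Omega)}\right]\|\bm{\eta}\|_{\mathbb V_0},
\end{equation*}
and dividing by $\|\bm{\eta}\|_{\mathbb V_0}$ produces \eqref{l5}, provided the parenthesized factor is positive.

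For the pressure bound \eqref{l6}, I would repeat the inf-sup trick applied to \eqref{w_sSNS2}: isolate $\mathbb{E}[b(\textbf{v},p_2)]$, divide by $\|\textbf{v}\|_{\mathbb V_0}$, and take the supremum. The source term contributes the first summand via \eqref{dual}; $\mathbb{E}[a(\bm{\eta},\textbf{v})]$ contributes the $\nu N_{\bm{\eta}}$ piece via \eqref{a_1}; the nonlinear term $\mathbb{E}[c(\bm{\eta},\bm{\eta},\textbf{v})]$ contributes the $C_{c_1}N_{\bm{\eta}}^2$ piece using the $C_{c_1}$ bound from Lemma~\ref{lem2}; and the two cross terms $\mathbb{E}[c(\bm{\eta},\bm{\xi},\textbf{v})]+\mathbb{E}[c(\bm{\xi},\bm{\eta},\textbf{v})]$ are controlled by $C_{c_1}\|\bm{\xi}\|_{\mathbb V_0}\|\bm{\eta}\|_{\mathbb V_0}\|\textbf{v}\|_{\mathbb V_0}$, which after using \eqref{l3} for $\|\bm{\xi}\|_{\mathbb V_0}$ and \eqref{l5} for $\|\bm{\eta}\|_{\mathbb V_0}$ produces the $\tfrac{C_{c_1}}{\nu}\|\textbf{F}\|_{H^{-1}}N_{\bm{\eta}}$ contribution. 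Grouping these gives exactly \eqref{l6}.

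The main obstacle I anticipate is the absorption step in Part 2. To extract a bound on $\|\bm{\eta}\|_{\mathbb V_0}$ from the inequality above, the coefficient $\nu-C_{c_4}\|\bm{\xi}\|_{\mathbb V_0}$ must be strictly positive, which amounts to an implicit smallness hypothesis $C_{c_4}\|\textbf{F}\|_{H^{-1}(\Omega)}<\nu^2$ not stated explicitly in the lemma. I would flag this as a standing assumption when writing the proof. Beyond this, the rest is careful bookkeeping using \eqref{a_1}, \eqref{c_2}, \eqref{c_5}, the antisymmetry identities \eqref{c_3}--\eqref{c_4}, and the inf-sup condition.
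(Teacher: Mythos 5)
Your proposal is correct and follows essentially the same route as the paper: energy estimates via the test functions $\bm{\xi}$ and $\bm{\eta}$ with the antisymmetry \eqref{c_4} killing the appropriate trilinear terms, absorption of $\mathbb E[c(\bm{\eta},\bm{\xi},\bm{\eta})]$ using the $C_{c_4}$ bound together with \eqref{l3}, and the inf-sup condition for both pressure bounds (the paper dispatches Part 1 with a one-line appeal to Lemma~\ref{lem2}, which is exactly the computation you spell out). Your observation that the absorption step silently requires $\nu - C_{c_4}\|\textbf{F}\|_{H^{-1}(\Omega)}/\nu > 0$ is a legitimate point the paper leaves implicit as well.
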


\begin{proof} 1) Similar to \textbf{Lemma \ref{lem2}}, inequality \eqref{l3}-\eqref{l4} clearly holds. 2) We choose test function $(\textbf{w}, q)= (\bm{\eta}, p_2)$ in \eqref{w_sSNS2}
	\begin{equation}\label{23}
		\begin{split}
			\mathbb E[a(\bm{\eta}, \bm{\eta})]+\mathbb E\left[c(\bm{\eta}, \bm{\eta}, \bm{\eta})\right]+\mathbb E[c(\bm{\eta}, \bm{\xi}, \bm{\eta})]+\mathbb E\left[c(\bm{\xi}, \bm{\eta}, \bm{\eta})\right] = \mathbb E\left[\left(\sigma\frac{d\textbf W}{d\textbf x},\bm{\eta}\right)\right],
		\end{split}
	\end{equation}
	and observe that by \eqref{c_4}, one has
	\begin{equation}\label{24}
		\begin{split}
			\mathbb E[a(\bm{\eta}, \bm{\eta})]+\mathbb E[c(\bm{\eta}, \bm{\xi}, \bm{\eta})]
			= \mathbb E\left[\left(\sigma\frac{d\textbf W}{d\textbf x},\bm{\eta}\right)\right].\nonumber
		\end{split}
	\end{equation}
	Then it follows that
	$$
	\nu \|\bm{\eta}\|^2_{\mathbb V_0}=\mathbb E[a(\bm{\eta}, \bm{\eta})]=\left|\mathbb E\left[\left(\sigma\frac{d\textbf W}{d\textbf x},\bm{\eta}\right)\right]-\mathbb E[c(\bm{\eta}, \bm{\xi}, \bm{\eta})]\right|.
	$$
	We  get energy estimation by using the Cauchy-Schwarz inequality, that is to say
	for some $ C_b,C_{c_4} >0$, we have
	\begin{equation}
		\begin{split}
			\nu \|\bm{\eta}\|^2_{\mathbb V_0} &= \mathbb E[a(\bm{\eta}, \bm{\eta})]\\
			&\leq \left|\mathbb E\left[\left(\sigma\frac{d\textbf W}{d\textbf x},\bm{\eta}\right)\right]\right|+\left|\mathbb E[c(\bm{\eta}, \bm{\xi}, \bm{\eta})]\right|\\
			&\leq \mathbb E\left[\left\|\sigma\frac{d\textbf W}{d\textbf x}\right\|_{H^{-1}(\Omega)}\right]\|\bm{\eta}\|_{\mathbb V_0}+ C_{c_4}\frac{\|\textbf{F}\|
				_{H^{-1} (\Omega)}}{\nu} \|\bm{\eta}\|^2_{\mathbb V_0}. \nonumber
		\end{split}
	\end{equation}
	
	Naturally, we derive the inequity 
	$$
	(\nu- C_{c_4}\frac{\|\textbf{F}\|
		_{H^{-1} (\Omega)}}{\nu}) \|\bm{\eta}\|_{\mathbb V_0}  \le \mathbb E\left[\left\|\sigma\frac{d \textbf W}{d\textbf x}\right\|_{ H^{-1}(\Omega)}\right], 
	$$
	i.e., \eqref{l5} is valid. 
	
	According to the inf-sup condition and \eqref{dual}, one obtains for the pressure 
	\begin{equation}
		\begin{split}
			&\quad \| p_2\|_{\mathbb W_0}\\
			&\le  \frac{1}{\beta}\sup_{0\neq\textbf v\in \mathbb V_0}\frac{\mathbb E[b(\textbf{v}, p_2)]}{\|\textbf v\|_{\mathbb V_0}} \\
			&=\frac{1}{\beta}\sup_{0\neq\textbf v\in \mathbb V_0}\frac{\mathbb E\left[(\sigma\frac{d\textbf W}{d \textbf{x}}, \textbf{v})\right]-\mathbb E[a(\bm{\eta},\textbf{v})]-\mathbb E[c(\bm{\eta}, \bm{\eta}, \textbf{v})]-\mathbb E[c(\bm{\eta},\bm{\xi},\textbf v)]-\mathbb E[c(\bm{\xi}, \bm{\eta},\textbf{v})]}{\|\textbf v\|_{\mathbb V_0}} \nonumber,
		\end{split}
	\end{equation}
	namely,
	\begin{equation}
		\begin{split}
			\| p_2\|_{\mathbb W_0}\le  \frac{1}{\beta} \left(\mathbb E\left[\left \|\sigma\frac{d\textbf W}{d\textbf x}
			\right\|_{ H^{-1} (\Omega)} \right]+\nu\|\bm{\eta}\|_{\mathbb V_0}+C_{c_1}\|\bm{\eta}\|^2_{\mathbb V_0}+C_{c_1}\|\bm{\eta}\|_{\mathbb V_0}\|\bm{\xi}\|_{\mathbb V_0}\right).\nonumber
		\end{split}
	\end{equation}
	Substitute \eqref{l3} and \eqref{l5} into the inequality above and it is evident that inequality \eqref{l6} holds.\end{proof}

\begin{theorem}[The existence and uniqueness theorem of the splitting-solution]\label{thm2}
	
	Let $\Omega \in \mathbb{R}^2$ be a bounded domain with Lipschitz boundary $\partial \Omega$ and the random body force $\textbf{F}+\sigma\frac{d\textbf W}{d\textbf x} \in L^2 \left(\Theta; H^{-1}(\Omega)\right), $ if
	\begin{equation}\label{Th1}
		\frac{\|\textbf{F}\|_{L^2(\Omega)}}{\nu ^2}  \textless \frac{1}{C_{c_5}} \qquad \text{and }\qquad
		\frac{ N_1 N_{\eta} \nu}{\nu^2-N_2\|\textbf{F}\|
			_{H^{-1} (\Omega)}} \textless 1.
	\end{equation}
	where  $C_{c_5} > 0$ such that $c(\textbf{z}_1-\textbf{z}_2,
	\bm{\xi}_1,\bm{\xi}_1-\bm{\xi}_2) \le C_{c_5}|\textbf{z}_1-\textbf{z}_2|_1|\bm{\xi}_1|_1|\bm{\xi}_1-\bm{\xi}_2|_1, N_{\bm{\eta}}=\mathbb E\left[ \left\|\sigma\frac{d\textbf{W}}{d\textbf x}\right\|_{H^{-1}(\Omega)}\right]\Big/\left(\nu- C_{c_4}\frac{\|\textbf{F}\|
		_{H^{-1}(\Omega)}}{\nu}\right),$\quad$N_1=\sup\limits_{0\neq\hat{\bm{\eta}}-\hat{\bm{\eta}}_1,\bm{\eta},\bm{\eta}-\bm{\eta}_1\in \mathbb V_0} \mathbb E[|c(\hat{\bm{\eta}}-\hat{\bm{\eta}}_1,\bm{\eta},\bm{\eta}-\bm{\eta}_1)|]/\left(\|\hat{\bm{\eta}}-\hat{\bm{\eta}}_1\|_\mathbb V\|\bm{\eta}\|_\mathbb V\|\bm{\eta}-\bm{\eta}_1\|_\mathbb V\right)$ and $N_2=\sup\limits_{0\neq\textbf {v},\bm{\eta}-\bm{\eta}_1\in \mathbb V_0}\mathbb E[|c(\bm{\eta}-\bm{\eta}_1,\bm{\xi},\bm{\eta}-\bm{\eta}_1)|]/\left(\|\bm{\eta}-\bm{\eta}_1\|^2_\mathbb V\|\bm{\xi}\|_\mathbb V\right)$, then there exists a unique pair of stochastic processes $(\textbf u, p)\in (\mathbb V_0, \mathbb W_0)$  satisfying equations\eqref{w_SNS1} a.s..
\end{theorem}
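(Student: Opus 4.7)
The plan is to exploit Theorem \ref{Equivalence}, so that it suffices to establish existence and uniqueness separately for the deterministic system \eqref{w_sSNS1} governing $(\bm{\xi},p_1)$ and the stochastic system \eqref{w_sSNS2} governing $(\bm{\eta},p_2)$; then $\textbf{u}=\bm{\xi}+\bm{\eta}$ together with a matching pressure $p$ is the unique solution of \eqref{w_SNS1}. As in the proof of Theorem \ref{thm1}, I would work in the divergence-free subspaces $\textbf{V}_{div}$ and $\mathbb{V}_{div}$ throughout, recovering the pressures at the end via the inf-sup condition \eqref{lem1}.

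For $\bm{\xi}$ I would repeat the Banach fixed-point argument of Theorem \ref{thm1} applied to the purely deterministic problem \eqref{w_sSNS3}. Defining $A_{\textbf{v}}(\bm{\xi},\textbf{w}):=a(\bm{\xi},\textbf{w})+c(\textbf{v},\bm{\xi},\textbf{w})$ and using Lax-Milgram to build a map $\pi_1:\textbf{v}\mapsto\bm{\xi}$, the contraction estimate rests on the trilinear constraint with constant $C_{c_5}$ combined with the a priori bound \eqref{l3}. The first hypothesis $\|\textbf{F}\|_{L^2(\Omega)}/\nu^2<1/C_{c_5}$ is exactly what forces the resulting Lipschitz factor below one, delivering a unique $\bm{\xi}\in\textbf{V}_{div}$ and then a unique $p_1$.

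For $\bm{\eta}$, with $\bm{\xi}$ now fixed, I would linearize the nonlinearity by outer iteration: for any auxiliary $\hat{\bm{\eta}}\in\mathbb{V}_{div}$ introduce
\[
B_{\hat{\bm{\eta}}}(\bm{\eta},\textbf{v}):=\mathbb{E}\bigl[a(\bm{\eta},\textbf{v})+c(\hat{\bm{\eta}},\bm{\eta},\textbf{v})+c(\bm{\eta},\bm{\xi},\textbf{v})+c(\bm{\xi},\bm{\eta},\textbf{v})\bigr].
\]
Testing against $\bm{\eta}$, the pieces $c(\hat{\bm{\eta}},\bm{\eta},\bm{\eta})$ and $c(\bm{\xi},\bm{\eta},\bm{\eta})$ vanish by \eqref{c_4}, leaving only $c(\bm{\eta},\bm{\xi},\bm{\eta})$, which is controlled by $N_2\|\bm{\xi}\|_{\mathbb{V}}$; combined with \eqref{l3} this yields effective coercivity constant $\nu-N_2\|\textbf{F}\|_{H^{-1}(\Omega)}/\nu$, which must be positive for the scheme to close. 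Under that proviso Lax-Milgram produces a unique $\bm{\eta}=\pi_2(\hat{\bm{\eta}})$ in $\mathbb{V}_{div}$, and Lemma \ref{lem3} supplies the uniform a priori bound $\|\bm{\eta}\|_{\mathbb{V}_0}\le N_{\bm{\eta}}$.

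The final step is to prove that $\pi_2$ is a contraction. Subtracting the equations for $\bm{\eta}_i=\pi_2(\hat{\bm{\eta}}_i)$, $i=1,2$, testing with $\textbf{w}=\bm{\eta}_1-\bm{\eta}_2$, and killing the skew-symmetric pieces with \eqref{c_4}, the residual estimate becomes
\[
\bigl(\nu-N_2\|\bm{\xi}\|_{\mathbb{V}}\bigr)\|\bm{\eta}_1-\bm{\eta}_2\|_{\mathbb{V}_0}\le N_1\|\bm{\eta}_2\|_{\mathbb{V}_0}\|\hat{\bm{\eta}}_1-\hat{\bm{\eta}}_2\|_{\mathbb{V}_0}.
\]
Inserting $\|\bm{\eta}_2\|_{\mathbb{V}_0}\le N_{\bm{\eta}}$ and $\|\bm{\xi}\|_{\mathbb{V}}\le\|\textbf{F}\|_{H^{-1}(\Omega)}/\nu$ produces Lipschitz constant $N_1 N_{\bm{\eta}}\nu/\bigl(\nu^2-N_2\|\textbf{F}\|_{H^{-1}(\Omega)}\bigr)$, which is subunital by the second hypothesis of \eqref{Th1}. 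Banach's fixed-point theorem then yields the unique $\bm{\eta}$, and $p_2$ is recovered through \eqref{lem1}. The main obstacle is the dual role of $\bm{\xi}$: it must be small enough that the non-vanishing $c(\bm{\eta},\bm{\xi},\bm{\eta})$ does not spoil coercivity of $B_{\hat{\bm{\eta}}}$ and simultaneously small enough that the contraction factor stays below one. Balancing these two requirements is exactly what forces the somewhat intricate denominator in the second inequality of \eqref{Th1}, and careful bookkeeping of which trilinear constant ($C_{c_5}$, $N_1$, $N_2$, $C_{c_4}$) governs which step is the most error-prone part of the argument.
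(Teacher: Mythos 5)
Your proposal is correct and follows essentially the same route as the paper: a Lax--Milgram plus Banach fixed-point argument for the deterministic part $\bm{\xi}$ (yielding the first condition in \eqref{Th1} via $C_{c_5}$ and \eqref{l3}), followed by an outer linearization of the $\bm{\eta}$-equation whose contraction constant is exactly $N_1 N_{\bm{\eta}}\nu/(\nu^2-N_2\|\textbf{F}\|_{H^{-1}(\Omega)})$. Your coercivity argument for the linearized $\bm{\eta}$-problem (isolating the single surviving term $c(\bm{\eta},\bm{\xi},\bm{\eta})$ after applying \eqref{c_4}) is in fact cleaner than the paper's $\epsilon$-Cauchy limiting argument, but it is a presentational difference, not a different method.
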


\begin{proof} Since equation \eqref{w_sSNS3}-\eqref{w_sSNS4}is equivalent to \eqref{w_sSNS1}-\eqref{w_sSNS2}, so the equivalent problem \eqref{w_sSNS3}-\eqref{w_sSNS4} in the divergence-free subspace $\mathbb V_{div}$ is considered. 
	
	1) Firstly, the existence and uniqueness condition of the deterministic solution $\bm{\xi}$ in the splitting equation \eqref{w_sSNS3} is considered.
	
	Define the bilinear form
	$$
	A_z(\bm{\xi},\textbf v) := a(\bm{\xi},\textbf v)+c(\textbf z,\bm{\xi},\textbf v) ,
	$$
	for given $\textbf{z}\in \textbf V_{div}$. Clearly, we  obtain
	\begin{equation}\label{th1}
		|A_z(\bm{\xi},\textbf v)| \le C_{z}\|\bm{\xi}\|_1\|\textbf{v}\|_1 , 
	\end{equation}
	where $C_z$ is a constant related to $\textbf{z}.$
	$$
	A_z(\bm{\xi},\bm{\xi}) = a(\bm{\xi},\bm{\xi}) = \nu |\bm{\xi}|^2_1 ,
	$$
	so that $A_z(\bm{\xi},\textbf{v})$ is coercive. Moreover,  $\textbf{F}$ is a continuous linear functional on $\textbf V_{div}$. Therefore, by Lax-Milgram theorem, for any given $\textbf{z}\in \textbf V_{div},$ there exists a unique solution $\bm{\xi}\in \textbf V_{div}$  such that
	\begin{equation}\label{th3_4}
		A_z(\bm{\xi},\textbf v) = (\textbf{F}, \textbf{v}), \quad \forall \textbf v\in \textbf V_{div} ,
	\end{equation}
	that is to say
	\begin{equation}\label{30}
		a(\bm{\xi},\textbf v)+c(\textbf z,\bm{\xi},\textbf v) = \left(\textbf{F}, \textbf{v}\right) , \quad \forall \textbf v\in \textbf V_{div}.\nonumber
	\end{equation}
	Define the nonlinear operator $\pi: \textbf{z} \longrightarrow \textbf{v}$ through \eqref{th3_4}, it will be shown that $\pi$ is a contraction mapping. For any $\textbf{z}_1, \textbf{z}_2\in \textbf V_{div}$, there exists a $\gamma \in(0,1) $ such that 
	\begin{equation}
		|\pi(\textbf{z}_1)-\pi(\textbf{z}_2)|_1 = |\bm{\xi}_1-\bm{\xi}_2|_1 \le \gamma |\textbf{z}_1-\textbf{z}_2|_1, \nonumber
	\end{equation}
	where $\pi(\textbf{z}_1) = \bm{\xi}_1, \pi(\textbf{z}_2) = \bm{\xi}_2$, so there exists a unique solution $\textbf{z}\in \textbf V_{div}$  such that $\textbf{z} = \bm{\xi}.$ 
	
	Indeed, taking $\bm{\xi}_1,\bm{\xi}_2$ to satisfy the equation \eqref{th3_4}, we obtain
	\begin{equation}
		a(\bm{\xi}_1-\bm{\xi}_2, \textbf v )+c(\textbf{z}_1-\textbf{z}_2,
		\bm{\xi}_1,\textbf v)+ c(\textbf{z}_2,
		\bm{\xi}_1-\bm{\xi}_2,\textbf v)= 0.\nonumber
	\end{equation}
	Let $\textbf{v}=\bm{\xi}_1-\bm{\xi}_2$, the last term on the left-hand side vanishes because of \eqref{c_4} and one obtains with \eqref{c_2} and H\"older inequality
	\begin{align}
		\nu |\bm{\xi}_1-\bm{\xi}_2|^2_1&\le |c(\textbf{z}_1-\textbf{z}_2,
		\bm{\xi}_1,\bm{\xi}_1-\bm{\xi}_2)|  \nonumber\\
		&\le C_{c_5}|\textbf{z}_1-\textbf{z}_2|_1|\bm{\xi}_1|_1|\bm{\xi}_1-\bm{\xi}_2|_1 \nonumber \\
		&\le C_{c_5}(|\textbf{z}_1-\textbf{z}_2|^2_1|\bm{\xi}_1|^2_1)^\frac{1}{2}(|\bm{\xi}_1-\bm{\xi}_2|^2_1)^\frac{1}{2}. \nonumber
	\end{align}
	This, together with \eqref{l1} and  \eqref{th1}, implies that 
	\begin{equation}\label{34}
		|\bm{\xi}_1-\bm{\xi}_2|_1
		\le \frac{C_{c_5}}{\nu ^2}\|\textbf{F}\|_{H^{-1}(\Omega)}|\textbf{z}_1-\textbf{z}_2|_1.
	\end{equation}
	Naturally, if the following conditions are met, then $\pi$ is a contraction mapping
	\begin{equation}\label{35}
		\frac{C_{c_5}}{\nu ^2}\|\textbf{F}
		\|_{H^{-1}(\Omega)}\textless 1.
	\end{equation}
	Obviously, the first inequality in \eqref{Th1} holds.
	
	2) Next, we prove the existence and uniqueness condition of the complex stochastic solution $\bm{\eta}$ in the splitting equation \eqref{w_sSNS4}.
	Define the bilinear form
	$$
	A_s(\bm{\eta},\textbf w) := \mathbb E[a(\bm{\eta},
	\textbf{v})]+\mathbb E[c(\textbf{s}, \bm{\eta}, \textbf{v})]+\mathbb E[c(\bm{\eta},\bm{\xi},\textbf{v})]+\mathbb E[c(\bm{\xi}, \bm{\eta}, \textbf{v})].
	$$
	Clearly, we  obtain
	\begin{equation}
		|A_s(\bm{\eta},\textbf v)| \le \max\{C_{\bm{\xi}},C_\textbf{s}\}\|\bm{\eta}\|_1\|\textbf{v}\|_1, \nonumber
	\end{equation}
	where $C_{\bm{\xi}},C_\textbf{s}$ are  constants related to $\bm{\xi},\textbf{s}.$ Furthermore, we have
	\begin{align}\label{38}
		\nu \|\bm{\eta}\|^2_{\mathbb V_0}  &=\mathbb E[a(\bm{\eta}\nonumber,
		\bm{\eta})] \\
		&=A(\bm{\eta},\bm{\eta})-|\mathbb E[c(\textbf{s}, \bm{\eta}, \bm{\eta})]|-|\mathbb E[c(\bm{\eta},\bm{\xi},\bm{\eta})]|-|\mathbb E[c(\bm{\xi}, \bm{\eta}, \bm{\eta})]| \nonumber\\
		&\le A(\bm{\eta},\bm{\eta}) +\|\bm{\eta}\|_{\mathbb V_0}^2 \|\bm{\xi}\|_{\mathbb V_0 }. 
	\end{align}
	Now from Cauchy's inequality with $\epsilon ( \epsilon > 0)$, we observe
	\begin{equation}
		\|\bm{\eta}\|_{\mathbb V_0}^2 \|\bm{\xi}\|_{\mathbb V_0}\le \epsilon \|\bm{\eta}\|_{\mathbb V_0}^2+\frac{1}{4\epsilon}\|\bm{\xi}\|_{\mathbb V_0}.\nonumber
	\end{equation}
	Insert this estimate into \eqref{38} , we have
	\begin{equation}
		\nu \|\bm{\eta}\|^2_{\mathbb V_0} \le A(\bm{\eta},\bm{\eta})+ \epsilon \|\bm{\eta}\|_{\mathbb V_0}^2+\frac{1}{4\epsilon}\|\bm{\xi}\|_{\mathbb V_0}.\nonumber
	\end{equation}
	Thus, as $\|\bm{\eta}\|_{\mathbb V_0} \longrightarrow \infty$ and choose $\epsilon > 0$ so small that
	$$
	\frac{A(\bm{\eta},\bm{\eta})}{\|\bm{\eta}\|_{\mathbb V_0}}\geq \left(\nu-\epsilon\right) \|\bm{\eta}\|_{\mathbb V_0}-\frac{1}{4\epsilon}\frac{C_{\bm{\xi}}}{\|\bm{\eta}\|_{\mathbb V_0}}\longrightarrow \infty,
	$$
	i.e.,$A(\bm{\eta},\textbf{v})$ is coercive.
	Therefore, by Lax-Milgram theorem,  there exists a unique solution $\bm{\eta}\in \mathbb V_{div}$  such that
	\begin{equation}\label{s}
		\mathbb E[a(\bm{\eta},
		\textbf{v})]+\mathbb E[c(\textbf{s}, \bm{\eta}, \textbf{v})]+\mathbb E[c(\bm{\eta}, \bm{\xi},\textbf{v})]+\mathbb E[c(\bm{\xi}, \bm{\eta}, \textbf{v})]= \mathbb E\left(\sigma\frac{d\textbf W}{d\textbf x}, \textbf{v}\right) , \quad \forall \textbf v\in \mathbb V_{div}.
	\end{equation}
	Define the nonlinear map $\pi_1: \textbf{s} \longrightarrow \bm{\eta} $  through \eqref{s}. Next, we will prove that $\pi_1$ is a contraction mapping. For any $\textbf{s}_1, \textbf{s}_2\in \mathbb V_{div}$, there exists a $\gamma_1 \in(0,1) $ such that 
	\begin{equation}
		|\pi_1(\textbf{s}_1)-\pi_1(\textbf{s}_2)|_1 = |\bm{\eta}_1-\bm{\eta}_2|_1 \le \gamma_1 |\textbf{s}_1-\textbf{s}_2|_1, \nonumber
	\end{equation}
	where $\pi_1(\textbf{s}_1) = \bm{\eta}_1, \pi_1(\textbf{s}_2) = \bm{\eta}_2$, so there exists a unique solution $\textbf{s}\in \mathbb V_{div}$  such that $\textbf{s} = \bm{\eta}.$ 
	
	Taking $\bm{\eta}_1,\bm{\eta}_2$ to satisfy the equation \eqref{s}, we obtain
	\begin{align*}
		\mathbb E[a(\bm{\eta}_1-\bm{\eta}_2, \textbf{v})]+\mathbb E[c(\textbf{s}_1-\textbf{s}_2,\bm{\eta}_1,\textbf{v})]+\mathbb E[c(\textbf{s}_2,\bm{\eta}_1-\bm{\eta}_2,\textbf{v})]+\mathbb E[c(\bm{\eta}_1-\bm{\eta}_2,\bm{\xi},\textbf{v})]&
		\\
		+\mathbb E[c(\bm{\xi}, \bm{\eta}_1-\bm{\eta}_2,\textbf{v})]= 0.&
	\end{align*}	
	Let $\textbf{v}=\bm{\eta}_1-\bm{\eta}_2$, we have
	\begin{align}
		&\nu \|\bm{\eta}_1-\bm{\eta}_2\|_\mathbb W ^2 \nonumber\\
		=& \left|\mathbb E[c(\textbf{s}_1-\textbf{s}_2,\bm{\eta}_1,\bm{\eta}_1-\bm{\eta}_2)]+\mathbb E[c(\textbf{s}_2,\bm{\eta}_1-\bm{\eta}_2,\bm{\eta}_1-\bm{\eta}_2)]+\mathbb E[c(\bm{\eta}_1-\bm{\eta}_2,\bm{\xi},\bm{\eta}_1-\bm{\eta}_2)]\right.\nonumber\\
		&+\left.\mathbb E[c(\bm{\xi}, \bm{\eta}_1-\bm{\eta}_2,\bm{\eta}_1-\bm{\eta}_2)]\right| \nonumber\\
		=& \left|\mathbb E[c(\textbf{s}_1-\textbf{s}_2,\bm{\eta}_1,\bm{\eta}_1-\bm{\eta}_2)]\right|+\left|\mathbb E[c(\bm{\eta}_1-\bm{\eta}_2,\bm{\xi}, \bm{\eta}_1-\bm{\eta}_2)]
		\right| \nonumber\\
		\leq & N_1\|\textbf{s}_1-\textbf{s}_2\|_\mathbb V\|\bm{\eta}_1\|_\mathbb V\|\bm{\eta}_1-\bm{\eta}_2\|_\mathbb V+N_2\|\bm{\eta}_1-\bm{\eta}_2\|^2_\mathbb V\|\bm{\xi}\|_\mathbb V \nonumber\\
		\leq & N_1\|\textbf{s}_1-\textbf{s}_2\|_\mathbb V\|\bm{\eta}_1-\bm{\eta}_2\|_\mathbb V N_{\eta}+N_2\|\bm{\eta}-\bm{\eta}_1\|^2_\mathbb V\frac{\|\textbf{F}\|
			_{H^{-1} (\Omega)}}{\nu}.\nonumber
	\end{align}
	Obviously, we only need to consider the following equation
	\begin{equation}
		\left(\nu- \frac{N_2\|\textbf{F}\|
			_{H^{-1} (\Omega)}}{\nu}\right)\|\bm{\eta}_1-\bm{\eta}_2\|_\mathbb V \leq N_1\|\textbf{s}_1-\textbf{s}_2\|_\mathbb V N_{\eta}.\nonumber
	\end{equation}
	Naturally, if the following conditions are met, then $\pi_1$ is a contraction mapping
	$$
	\frac{ N_1 N_{\eta} \nu}{\nu^2-N_2\|\textbf{F}\|
		_{H^{-1} (\Omega)}} < 1. 
	$$
	Summing up, we finish the proof of  \textbf{Theorem \ref{thm2}}.\end{proof}


It is indeed true that the stability condition of the splitting method in \textbf{Lemma \ref{lem3}} differs from the stability condition of the original equation in \textbf{Lemma \ref{lem2}}. In \textbf{Lemma \ref{lem3}}, the stability of the splitting method is translated into the stability conditions required for solving a deterministic NS equation and a stochastic equation, i.e.  $\bm{\xi}$ and $\bm{\eta}$ need to satisfy their respective stability conditions. Upon comparison, it becomes apparent that  the splitting method allows for a more relaxed stability condition, enabling better handling of data with nonlinear characteristics.  Therefore, the splitting method facilitates effective and precise problem-solving in practical scenarios, ultimately leading to more stable splitting outcomes. This offers additional choices and possibilities for effectively solving real-world problems.
\subsection{Modified splitting format}

Notably, the right-hand  control quantity of stochastic equation \eqref{w_sSNS2} in the splitting method is a disturbance of the external force, and this disturbance is quite small compared to the external force in actual situations. In this subsection, we will discuss how the presence or absence of a nonlinear term $c(\bm{\eta}, \bm{\eta}, \textbf{v}) $ affects the solution.

Specifically, consider the weak formulations of \eqref{SSNS3} in the splitting method , using the following formula: find $\tilde{\bm{\eta}}\in \mathbb V_0,\bm{\xi}\in H^1_0(\Omega) , \tilde p_2\in\mathbb W_0$ such that
\begin{equation}\label{iw_sSNS2}
	\begin{split}
		\mathbb E[a(\tilde{\bm{\eta}},\textbf{v})] +\mathbb E[c(\tilde{\bm{\eta}},\bm{\xi},\textbf v)]+\mathbb E[c(\bm{\xi}, \tilde{\bm{\eta}},\textbf{v})]+&\mathbb E[b(\textbf{v}, \tilde p_2)] = \mathbb E\left[\left(\sigma\frac{d\textbf W}{d\textbf x}, \textbf{v}\right)\right],\\
		&\mathbb E[b(\tilde{\bm{\eta}}, q)]= 0,
	\end{split}
\end{equation}
for all test function $(\textbf{v},q)\in(\mathbb V_0, \mathbb W_0)$.

\begin{remark}[Stability of the modified splitting format]
	
	Based on the previous discussion, similar to Lemma \ref{lem3}, the stability of the solution derived from the modified splitting method aligns with the stability of the solution obtained through the splitting method.
\end{remark}
Now, let's delve into the analysis of the existence and uniqueness of the solution within the framework of the modified splitting method.
\begin{theorem}[The existence and uniqueness theorem of the modified splitting format]\label{ex_thm2}
	
	Assuming the first inequality in \eqref{Th1} holds and $\textbf{F}+\sigma\frac{d\textbf W}{d\textbf x} \in L^2 \left(\Theta; H^{-1}(\Omega)\right).$ Then the variational equation \eqref{iw_sSNS2} has a unique solution $\tilde{\bm{\eta}} \in \mathbb V_0$.
\end{theorem}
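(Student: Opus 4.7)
The plan is to exploit the fact that, once the deterministic field $\bm{\xi}$ has been fixed, the modified equation \eqref{iw_sSNS2} is \emph{linear} in $\tilde{\bm{\eta}}$, and therefore reduces to a single application of the Lax-Milgram theorem on the divergence-free subspace $\mathbb V_{div}$. First, since the first inequality in \eqref{Th1} is assumed, Part~1 of the proof of Theorem~\ref{thm2} already yields a unique $\bm{\xi}\in \textbf{V}_{div}$ solving \eqref{w_sSNS3}, together with the a priori bound \eqref{l3}, namely $|\bm{\xi}|_1\le \|\textbf{F}\|_{H^{-1}(\Omega)}/\nu$. Treating this $\bm{\xi}$ as a fixed coefficient, the problem reduces (via the inf-sup condition of Lemma~\ref{inf-sup condition}, exactly as in the passage from \eqref{w_SNS1} to \eqref{w_SNS3}) to: find $\tilde{\bm{\eta}}\in \mathbb V_{div}$ such that
\begin{equation*}
A(\tilde{\bm{\eta}},\textbf{v}):=\mathbb E[a(\tilde{\bm{\eta}},\textbf{v})]+\mathbb E[c(\tilde{\bm{\eta}},\bm{\xi},\textbf{v})]+\mathbb E[c(\bm{\xi},\tilde{\bm{\eta}},\textbf{v})]=\mathbb E\left[\left(\sigma\frac{d\textbf{W}}{d\textbf{x}},\textbf{v}\right)\right], \quad \forall \textbf{v}\in \mathbb V_{div},
\end{equation*}
with the pressure $\tilde p_2 \in \mathbb W_0$ recovered afterwards from the inf-sup argument.

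Next I would verify the Lax-Milgram hypotheses for $A$. Continuity follows directly from \eqref{a_1} and \eqref{c_5}, combined with the bound on $\bm{\xi}$, giving $|A(\tilde{\bm{\eta}},\textbf{v})|\le C\|\tilde{\bm{\eta}}\|_{\mathbb V_0}\|\textbf{v}\|_{\mathbb V_0}$ with a constant depending only on $\nu$ and $\|\textbf{F}\|_{H^{-1}(\Omega)}$. The continuity of the right-hand side as a linear functional on $\mathbb V_{div}$ is immediate from the assumption $\sigma\frac{d\textbf{W}}{d\textbf{x}}\in L^2(\Theta;H^{-1}(\Omega))$ together with \eqref{dual}.

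The key and most delicate step is coercivity. Testing $A$ against $\tilde{\bm{\eta}}$ and invoking \eqref{c_4} annihilates the $\mathbb E[c(\bm{\xi},\tilde{\bm{\eta}},\tilde{\bm{\eta}})]$ term, leaving
\begin{equation*}
A(\tilde{\bm{\eta}},\tilde{\bm{\eta}})=\nu\|\tilde{\bm{\eta}}\|^2_{\mathbb V_0}+\mathbb E[c(\tilde{\bm{\eta}},\bm{\xi},\tilde{\bm{\eta}})].
\end{equation*}
Using the trilinear estimate involving $C_{c_4}$ already introduced in Lemma~\ref{lem3}, together with \eqref{l3}, one obtains
\begin{equation*}
|\mathbb E[c(\tilde{\bm{\eta}},\bm{\xi},\tilde{\bm{\eta}})]|\le C_{c_4}\|\bm{\xi}\|_{\mathbb V_0}\|\tilde{\bm{\eta}}\|^2_{\mathbb V_0}\le \frac{C_{c_4}\|\textbf{F}\|_{H^{-1}(\Omega)}}{\nu}\|\tilde{\bm{\eta}}\|^2_{\mathbb V_0},
\end{equation*}
so that
\begin{equation*}
A(\tilde{\bm{\eta}},\tilde{\bm{\eta}})\ge \left(\nu-\frac{C_{c_4}\|\textbf{F}\|_{H^{-1}(\Omega)}}{\nu}\right)\|\tilde{\bm{\eta}}\|^2_{\mathbb V_0}.
\end{equation*}
Coercivity therefore holds under the smallness condition $\nu^2>C_{c_4}\|\textbf{F}\|_{H^{-1}(\Omega)}$, which is precisely the standing hypothesis already implicit in the finiteness of $N_{\bm{\eta}}$ in \eqref{l5}--\eqref{l6} and is strictly weaker than the second inequality in \eqref{Th1}.

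With continuity, coercivity, and continuity of the right-hand side in hand, Lax-Milgram delivers a unique $\tilde{\bm{\eta}}\in \mathbb V_{div}$; the inf-sup condition of Lemma~\ref{inf-sup condition} then produces a unique $\tilde p_2\in \mathbb W_0$, completing the proof. The main obstacle is the coercivity step, because it is there that the hypothesis on $\textbf{F}$ must be exploited; beyond that point the argument is purely linear and entirely bypasses the contraction-mapping machinery required for \eqref{w_sSNS4} in Theorem~\ref{thm2}, which is exactly the analytical simplification promised by dropping the nonlinear term $(\bm{\eta}\cdot\nabla)\bm{\eta}$.
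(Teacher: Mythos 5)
Your proposal follows essentially the same route as the paper: fix $\bm{\xi}$ from Part~1 of Theorem~\ref{thm2}, observe that \eqref{iw_sSNS2} is linear in $\tilde{\bm{\eta}}$, define the bilinear form $A(\tilde{\bm{\eta}},\textbf{v})=\mathbb E[a(\tilde{\bm{\eta}},\textbf{v})]+\mathbb E[c(\tilde{\bm{\eta}},\bm{\xi},\textbf{v})]+\mathbb E[c(\bm{\xi},\tilde{\bm{\eta}},\textbf{v})]$, check continuity, and conclude by Lax--Milgram on $\mathbb V_{div}$. The one substantive divergence is the coercivity step. You prove genuine coercivity in the Lax--Milgram sense, $A(\tilde{\bm{\eta}},\tilde{\bm{\eta}})\ge\bigl(\nu-C_{c_4}\|\textbf{F}\|_{H^{-1}(\Omega)}/\nu\bigr)\|\tilde{\bm{\eta}}\|^2_{\mathbb V_0}$, at the price of the smallness condition $\nu^2>C_{c_4}\|\textbf{F}\|_{H^{-1}(\Omega)}$, which you correctly flag as not literally among the stated hypotheses but already implicit in the positivity of the denominator of $N_{\bm{\eta}}$ in Lemma~\ref{lem3}. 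The paper instead tries to avoid any such condition by absorbing $\|\tilde{\bm{\eta}}\|^2_{\mathbb V_0}\|\bm{\xi}\|_{\mathbb V_0}$ via a Cauchy-with-$\epsilon$ step and arguing only that $A(\tilde{\bm{\eta}},\tilde{\bm{\eta}})/\|\tilde{\bm{\eta}}\|_{\mathbb V_0}\to\infty$; that inequality as written is not a correct application of Young's inequality (the exponents do not balance), and asymptotic growth of the Rayleigh-type quotient is in any case weaker than the uniform bound $A(u,u)\ge\alpha\|u\|^2$ that Lax--Milgram requires. So your version is the more defensible one: it buys rigor at the cost of making explicit a smallness hypothesis that the paper leaves tacit, and your closing remark that the linearity entirely bypasses the contraction-mapping machinery of Theorem~\ref{thm2} matches the paper's intent exactly.
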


\begin{proof} The existence and uniqueness of solutions to deterministic equations are shown in \textbf {Theorem \ref{thm2}}. Next we focus on the existence and uniqueness of solutions to improved stochastic equations. 
	
	Define the bilinear form
	$$
	A(\tilde{\bm{\eta}},\textbf v) := \mathbb E[a(\tilde{\bm{\eta}},
	\textbf{v})]+\mathbb E[c(\tilde{\bm{\eta}}, \bm{\xi},\textbf{v})]+\mathbb E[c(\bm{\xi}, \tilde{\bm{\eta}}, \textbf{v})].
	$$
	Clearly, we  obtain
	\begin{equation}
		|A(\tilde{\bm{\eta}},\textbf v)| \le C_\textbf{v}\|\tilde{\bm{\eta}}\|_1\|\textbf{v}\|_1, \nonumber
	\end{equation}
	where $C_{\bm{\xi}}$ is a constant related to $\bm{\xi}.$ Furthermore, we have
	\begin{align}\label{44}
		\nu \|\tilde{\bm{\eta}}\|^2_{\mathbb V_0}  &=\mathbb E[a(\tilde{\bm{\eta}},
		\tilde{\bm{\eta}})]\nonumber \\
		&=A(\tilde{\bm{\eta}},\tilde{\bm{\eta}})-|\mathbb E[c(\tilde{\bm{\eta}},\bm{\xi},\tilde{\bm{\eta}})]|\nonumber \\
		&\le A(\tilde{\bm{\eta}},\tilde{\bm{\eta}}) +\|\tilde{\bm{\eta}}\|_{\mathbb V_0}^2 \|\bm{\xi}\|_{\mathbb V_0 }.
	\end{align}
	Now from Cauchy's inequality with $\epsilon ( \epsilon > 0)$, we observe
	\begin{equation}
		\|\tilde{\bm{\eta}}\|_{\mathbb V_0}^2 \|\bm{\xi}\|_{\mathbb V_0}\le \epsilon \|\tilde{\bm{\eta}}\|_{\mathbb V_0}^2+\frac{1}{4\epsilon}\|\bm{\xi}\|_{\mathbb V_0}.\nonumber
	\end{equation}
	Insert this estimate into \eqref{44} , we have
	\begin{equation}
		\nu \|\tilde{\bm{\eta}}|^2_{\mathbb V_0} \le A(\tilde{\bm{\eta}},\tilde{\bm{\eta}})+ \epsilon \|\tilde{\bm{\eta}}\|_{\mathbb V_0}^2+\frac{1}{4\epsilon}\|\bm{\xi}\|_{\mathbb V_0}.\nonumber
	\end{equation}
	Thus, as $\|\tilde{\bm{\eta}}\|_{\mathbb V_0} \longrightarrow \infty$ and choose $\epsilon > 0$ so small that
	$$
	\frac{A(\tilde{\bm{\eta}},\tilde{\bm{\eta}})}{\|\tilde{\bm{\eta}}\|_{\mathbb V_0}}\geq \left(\nu-\epsilon\right) \|\tilde{\bm{\eta}}\|_{\mathbb V_0}-\frac{1}{4\epsilon}\frac{C_{\bm{\xi}}}{\|\tilde{\bm{\eta}}\|_{\mathbb V_0}}\longrightarrow \infty,
	$$
	i.e.,$A(\tilde{\bm{\eta}},\textbf{v})$ is coercive.
	
	Therefore, by Lax-Milgram theorem,  there exists a unique solution $\bm{\eta}\in \mathbb V_{div}$  such that
	\begin{equation}
		\begin{split}
			\mathbb E[a(\tilde{\bm{\eta}},
			\textbf{v})]+\mathbb E[c(\tilde{\bm{\eta}}, \bm{\xi},\textbf{v})]+\mathbb E[c(\bm{\xi}, \tilde{\bm{\eta}}, \textbf{v})]= \mathbb E\left(\sigma\frac{d\textbf W}{d\textbf x}, \textbf{v}\right) , \quad \forall \textbf v\in \mathbb V_{div}.\nonumber
		\end{split}
	\end{equation}
	
	Summing up, we finish the proof of \textbf{Theorem \ref{ex_thm2}}.\end{proof}

In summary, the stability of the solution obtained by the modified splitting method is related to the stability of the splitting method, and the condition for existence and uniqueness of the solution will be relaxed accordingly. In order to comprehensively understand the effects of ignoring nonlinear terms and to determine the stability and accuracy of the modified splitting method in providing existence and unique solutions, further analysis of statistical errors will be conducted.

\section{Spatial discretization}\label{sec4}

This chapter is dedicated to the discussion of error estimation in the finite element method. We employ the Galerkin finite element discretization method to handle equations \eqref{w_sSNS1}, \eqref{w_sSNS2} and \eqref{iw_sSNS2} , and linearize the non-linear convection terms using the Newton method. Furthermore, our primary focus is on analyzing the error estimation of the splitting method and the modified splitting format within the framework of steady-state SNS equation.

Let $\tau^h$ be the regular triangular finite element mesh of the domain $\Omega$, where $h$ represents the maximum width of all typical triangular finite element meshes. Define the finite element spaces
\begin{align}
	\textbf{V}^h &=\left\{ \textbf{v}^h=[v^h_1,v^h_2]^T : v_i^h\in H^1_0(\overline{\Omega}), 
	\textbf{v}^h|_e \in \mathcal{P}^m \times \mathcal{P}^m, \forall e \in \tau^h \right\} \label{V_1},\\
	\textbf{V}_0^h &=\left\{ \textbf{v}^h \in\textbf{V}^h : v_i^h = 0 \text{ on }  \partial \Omega, i=1,2 \right\}\label{V_0}, \\
	\textbf{Q}^h &=\left\{ q^h : q^h\in H^1_0({\overline{\Omega}}), q^h|_e \in \mathcal{P}^s , \forall e \in \tau^h \right\}, \label{Q}\\
	\textbf{V}_{div}^h &=\left\{\textbf{v}^h\in \textbf{V}^h: b(\textbf{v}^h,  q^h) =0,  \quad\forall q^h \in \textbf{Q}^h \right\}, \label{V_div^h}
\end{align}
where $P^m$ denotes the polynomial space with degree less than or equal to m. Consider $\textbf{V}^h\times \textbf{Q}^h$ as a pair of Taylor-Hood finite element spaces, namely $m=2$ and $s=1$. 

We also set
$$
\mathbb V^h :=L^2\left(\Theta; \textbf{V}^h\right),\quad  \mathbb W^h :=L^2\left(\Theta; \textbf{Q}^h\right).
$$
The Taylor-Hood finite element ensures the stability of the finite element discretization , which satisfies the following discrete inf-sup condition \eqref{lem1} or Ladyzhenskaya-Babuska-Brezzi(LBB) condition\cite{Gunzburger_1989}, i.e.,there exits a constant $\hat{\beta} > 0$ such that
\begin{equation}\label{D_LBB}
	\inf_{0\neq q^h\in \mathbb W_0^h}\sup_{0\neq\textbf{v}^h\in \mathbb V^h}\frac{\mathbb E[b(\textbf{v}^h, q^h)]}{\|\textbf{v}^h\|_{\mathbb V_0}\|q^h\|_{\mathbb W}} \geq
	\hat{\beta} .
\end{equation}

Then, the Galerkin finite element discretization of the steady-state SNS equation reads as follows: Find $(\textbf {u}^h,p^h)\in\mathbb V^h\times \mathbb Q^h$ such that
\begin{equation}\label{dw_SNS1}
	\begin{split}
		a(\textbf{u}^h, \textbf{v}^h)+c(\textbf{u}^h,\textbf{u}^h, \textbf{v}^h) +b(\textbf{v}^h, p^h) &= (\textbf{F}, \textbf{v}^h)+\left(\sigma\frac{d\textbf W}{d\textbf x}, \textbf{v}^h\right), a.s.\\
		b(\textbf{u}^h, q^h)&= 0, a.s.
	\end{split}
\end{equation}
for all test function $(\textbf{v}^h,q^h)\in\mathbb V^h\times \mathbb Q^h$.

Accordingly, the Galerkin finite element discrete format of the splitting method  is formulated as following:  Find $\bm{\eta}^h\in\mathbb V^h, \bm{\xi}^h\in\textbf{V}^h, p_1^h, p_2^h\in\mathbb W^h$ such that
\begin{equation}\label{D_sSNS1}
	\begin{split}
		\mathbb E[a(\bm{\xi}^h, \textbf v^h)]+\mathbb E[c(\bm{\xi}^h, \bm{\xi}^h, \textbf v^h)] +\mathbb E[b(\textbf v^h, p_1^h)] &=\mathbb E\left[\left(\textbf F, \textbf{v}^h\right)\right],\\
		\mathbb E[b(\bm{\xi}^h, q^h)]&= 0,
	\end{split}
\end{equation}
and
\begin{align}\label{D_sSNS2}
	\mathbb E[a(\bm{\eta}^h,\textbf{v}^h)] +\mathbb E[c(\bm{\eta}^h, \bm{\eta}^h, \textbf{v}^h)] +\mathbb E[c(\bm{\eta}^h,\bm{\xi}^h, \textbf v^h)]&+\mathbb E[c(\bm{\xi}^h, \bm{\eta}^h,\textbf{v}^h)]+\mathbb E[b(\textbf v^h, p_2^h)]&\nonumber\\
	&= \mathbb E\left[\left(\sigma\frac{d\textbf W}{d\textbf x}, \textbf{v}^h\right)\right],\nonumber\\
	\mathbb E[b(\bm{\eta}^h, q^h)]&= 0,
\end{align}
for all test function $(\textbf{v}^h,q^h)\in\mathbb V^h\times \mathbb Q^h$.

The existence and uniqueness of the discrete solution can be proven in a similar manner as for the continuous equation \cite{Powell_2012}. In particular, uniqueness is guaranteed only in the case of small external forces (on the right hand side) and large viscosity. 
\begin{lemma} [Stability of the Finite Element Splitting-solution]\label{lem5}
	
	1) Let $(\bm{\xi}^h,p_1^h)\in\textbf V^h\times \textbf Q^h$ be any solution of  \eqref{D_sSNS1}, then 
	\begin{gather}
		|\bm{\xi}^h|_1\le \frac{1}{\nu}\|\textbf{F}\|
		_{H^{-1} (\Omega)}, \label{l4_1}\\
		\| p_1^h\|_\mathbb W \le \frac{1}{\beta_1^h}\left(2\|\textbf{F}
		\|_{H^{-1} (\Omega)} +\frac{\hat{C}_{c_3}}{\nu^2}\|\textbf{F}
		\|^2_{H^{-1} (\Omega)}\right),\label{l4_2}
	\end{gather}
	where the constant $\hat{C}_{c_3} > 0$ such that $|c(\bm{\xi}^h, \bm{\xi}^h, \bm{\xi}^h)|\le \hat{C} _{c_3}|\bm{\xi}^h|_1^2|\textbf{v}^h|_1$.
	
	2) Let $\bm{\eta} \in \mathbb V^h$ be any solution of  \eqref{D_sSNS2} and $\bm{\xi}^h$ satisfies inequality \eqref{l4_1}-\eqref{l4_2}, then 
	\begin{gather}
		\|\bm{\eta}^h\|_{\mathbb V_0} \le \hat{N}_{\eta}, \label{l4_3}\\
		\| p_2^h\|_{\mathbb W_0}\le  \frac{1}{\beta^h} \left(\mathbb E\left[\left \|\sigma\frac{d\textbf W}{d\textbf x}
		\right\|_{ H^{-1} (\Omega)} \right]+C_{c_1}\hat{N}_{\bm{\eta}}^2+\left(\nu+\frac{C_{c_1}}{\nu}\|\textbf{F}\|
		_{ H^{-1} (\Omega)}\right)\hat{N}_{\bm{\eta}}\right),\label{l4_4}
	\end{gather}
	where $\hat{N}_{\eta}=\mathbb E\left[\left\|\sigma\frac{d\textbf{W}} {d\textbf x}\right\|_{H^{-1}(\Omega)}\right]\Big/\left(\nu- \hat{C}_{c_4}\frac{\|\textbf{F}\|
		_{H^{-1} (\Omega)}}{\nu}\right)$, the constant $\hat{C}_{c_4}>0$ such that $c(\bm\eta^h,\bm{\xi}^h,\bm \eta^h)\le \hat{C}_{c_4}|\bm \eta|_1^2|\bm{\xi}|_1$.
\end{lemma}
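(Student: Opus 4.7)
The plan is to mirror the proof of Lemma \ref{lem3} in the continuous setting, since the Galerkin discretization preserves the algebraic structure of the forms $a$, $b$, $c$ and the discrete LBB inequality \eqref{D_LBB} plays the role of the continuous inf-sup condition. For part 1, I would test the momentum equation in \eqref{D_sSNS1} with $\textbf{v}^h = \bm{\xi}^h$ and exploit the divergence constraint $\mathbb{E}[b(\bm{\xi}^h, q^h)] = 0$ (taking $q^h = p_1^h$) to eliminate the pressure term. Using the discrete analog of \eqref{c_4} to cancel $c(\bm{\xi}^h, \bm{\xi}^h, \bm{\xi}^h)$, one gets $\nu |\bm{\xi}^h|_1^2 = (\textbf{F}, \bm{\xi}^h)$, and the duality bound \eqref{dual} gives \eqref{l4_1}. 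For the pressure, apply the discrete LBB condition to $p_1^h$:
$$\|p_1^h\|_{\mathbb{W}} \le \frac{1}{\beta_1^h}\sup_{0\neq\textbf{v}^h\in\mathbb{V}^h}\frac{\mathbb{E}[b(\textbf{v}^h, p_1^h)]}{\|\textbf{v}^h\|_{\mathbb{V}_0}},$$
then rewrite the numerator by isolating $b(\textbf{v}^h, p_1^h)$ in \eqref{D_sSNS1} and bound the resulting terms using \eqref{a_1}, the trilinear bound defining $\hat{C}_{c_3}$, and \eqref{l4_1}. This yields \eqref{l4_2}.

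For part 2, I would test \eqref{D_sSNS2} with $\textbf{v}^h = \bm{\eta}^h$ and $q^h = p_2^h$. The discrete skew-symmetry \eqref{c_4} kills both $c(\bm{\eta}^h, \bm{\eta}^h, \bm{\eta}^h)$ and $c(\bm{\xi}^h, \bm{\eta}^h, \bm{\eta}^h)$, leaving
$$\nu\|\bm{\eta}^h\|_{\mathbb{V}_0}^2 + \mathbb{E}[c(\bm{\eta}^h,\bm{\xi}^h,\bm{\eta}^h)] = \mathbb{E}\Bigl[\Bigl(\sigma\tfrac{d\textbf{W}}{d\textbf{x}}, \bm{\eta}^h\Bigr)\Bigr].$$
Bound the stochastic forcing via \eqref{dual} and the trilinear term via the hypothesis $|c(\bm{\eta}^h,\bm{\xi}^h,\bm{\eta}^h)| \le \hat{C}_{c_4}|\bm{\eta}^h|_1^2|\bm{\xi}^h|_1$, substituting \eqref{l4_1} to replace $|\bm{\xi}^h|_1$ by $\|\textbf{F}\|_{H^{-1}}/\nu$. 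Dividing by $\|\bm{\eta}^h\|_{\mathbb{V}_0}$ and rearranging produces the denominator $\nu - \hat{C}_{c_4}\|\textbf{F}\|_{H^{-1}}/\nu$ in $\hat{N}_{\bm{\eta}}$, which is exactly \eqref{l4_3}. The pressure bound \eqref{l4_4} follows by a second application of the discrete LBB condition: write $\|p_2^h\|_{\mathbb{W}_0} \le (\beta^h)^{-1}\sup_{\textbf{v}^h} \mathbb{E}[b(\textbf{v}^h,p_2^h)]/\|\textbf{v}^h\|_{\mathbb{V}_0}$, isolate $b(\textbf{v}^h,p_2^h)$ from \eqref{D_sSNS2}, bound each remaining term by $C_{c_1}\|\bm{\eta}^h\|_{\mathbb{V}_0}^2$, $C_{c_1}\|\bm{\eta}^h\|_{\mathbb{V}_0}\|\bm{\xi}^h\|_{\mathbb{V}_0}$, etc., and insert the bounds \eqref{l4_1} and \eqref{l4_3}.

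The main obstacle, in my view, is not arithmetic but structural: for Taylor--Hood elements the discrete velocity space satisfies $\nabla\cdot\bm{\xi}^h = 0$ only weakly (i.e., $b(\bm{\xi}^h,q^h)=0$ for $q^h\in\textbf{Q}^h$), so the pointwise identity \eqref{c_4} used to cancel the cubic terms does not hold verbatim. I would address this by invoking the standard convention of replacing $c$ with its skew-symmetric variant $\tilde c(\textbf{u},\textbf{v},\textbf{w}) = \tfrac{1}{2}[c(\textbf{u},\textbf{v},\textbf{w}) - c(\textbf{u},\textbf{w},\textbf{v})]$ at the discrete level, which satisfies $\tilde c(\textbf{u},\textbf{v},\textbf{v})=0$ identically and agrees with $c$ on the continuous divergence-free subspace, so the a priori estimates transfer cleanly. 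With this convention in place, the remaining arguments are direct transcriptions of those in Lemma \ref{lem3}, only replacing $\beta$ by $\beta_1^h$ or $\beta^h$ and $C_{c_3}, C_{c_4}$ by their discrete counterparts $\hat{C}_{c_3}, \hat{C}_{c_4}$.
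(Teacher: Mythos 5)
Your proposal is correct and follows exactly the route the paper intends: the paper gives no explicit proof of Lemma~\ref{lem5}, only the preceding remark that the discrete results can be "proven in a similar manner as for the continuous equation," i.e., by transcribing the proof of Lemma~\ref{lem3} with $\beta$, $C_{c_3}$, $C_{c_4}$ replaced by their discrete counterparts $\beta_1^h$, $\hat C_{c_3}$, $\hat C_{c_4}$, which is precisely what you do. Your additional observation that the identity \eqref{c_4} fails for Taylor--Hood velocities that are only discretely divergence-free, together with the standard skew-symmetrization remedy, addresses a genuine subtlety the paper silently glosses over; it is a strengthening of the intended argument rather than a deviation from it.
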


\begin{theorem}[The existence and uniqueness theorem of the finite element Splitting-solution]\label{thm4_2}
	
	Let $\Omega \in \mathbb{R}^2$ be a bounded domain with Lipschitz boundary $\partial \Omega$ and the random body force $\textbf{F}+\sigma\frac{d\textbf W}{d\textbf x} \in L^2 \left(\Theta; H^{-1}(\Omega)\right). $ If a pair of finite element spaces  $\mathbb V^h\times \mathbb Q^h$  is used that satisfies the discrete inf-sup condition \eqref{D_LBB}  and let in addition
	\begin{equation}\label{ex_solution}
		\begin{split}
			\frac{\|\textbf{F}\|_{L^2(\Omega)}}{\nu ^2}  \textless \frac{1}{\hat{C}_{c_5}} \qquad  \text{and }\qquad \frac{\hat{N}_1 \hat{N}_{\eta} \nu}{\nu^2-\hat{N}_2\|\textbf{F}\|
				_{H^{-1} (\Omega)}} \textless 1,
		\end{split}
	\end{equation}
	where $\hat{C}_{c_5},\hat{N_1},\hat{N_2}>0$ such that $c(\textbf{z}_1^h-\textbf{z}_2^h,\bm{\xi}_1^h,\bm{\xi}_1^h-\bm{\xi}_2^h) \le \hat{C}_{c_5}|\textbf{z}_1^h-\textbf{z}_2^h|_1|\bm{\xi}_1^h|_1|\bm{\xi}_1^h-\bm{\xi}_2^h|_1,\hat{N}_1=\sup\limits_{0\neq\hat{\bm{\eta}}^h-\hat{\bm{\eta}}^h_1,\bm{\eta}^h,\bm{\eta}^h-\bm{\eta}^h_1\in \mathbb V_0} \mathbb E[|c(\hat{\bm{\eta}}^h-\hat{\bm{\eta}}^h_1,\bm{\eta}^h,\bm{\eta}^h-\bm{\eta}^h_1)|]/\left(\|\hat{\bm{\eta}}^h-\hat{\bm{\eta}}^h_1\|_\mathbb V\|\bm{\eta}^h\|_\mathbb V\|\bm{\eta}^h-\bm{\eta}^h_1\|_\mathbb V\right)$ and $\hat{N}_2=\sup\limits_{0\neq\textbf {v}^h,\bm{\eta}^h-\bm{\eta}^h_1\in \mathbb V_0}\mathbb E[|c(\bm{\eta}^h-\bm{\eta}^h_1,\bm{\xi}^h,\bm{\eta}^h-\bm{\eta}^h_1)|]/\left(\|\bm{\eta}^h-\bm{\eta}^h_1\|^2_\mathbb V\|\bm{\xi}^h\|_\mathbb V\right)$, then there exists a unique pair of stochastic processes $(\textbf u^h, p^h)\in (\mathbb V_0, \mathbb W_0)$  satisfying equations\eqref{D_sSNS1}-\eqref{D_sSNS2} a.s..
	
\end{theorem}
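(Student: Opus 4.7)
The plan is to transplant the argument of Theorem~\ref{thm2} to the discrete setting, exploiting the fact that $\textbf V_{div}^h$ and $\mathbb V^h \cap \mathbb V_{div}$ are closed subspaces on which Lax--Milgram still applies, and using the discrete inf--sup condition \eqref{D_LBB} to recover the pressures at the end. First I would replace \eqref{D_sSNS1}--\eqref{D_sSNS2} by their equivalent divergence--free formulations: once unique velocity solutions $\bm\xi^h$ and $\bm\eta^h$ are obtained in the discrete divergence-free subspaces, \eqref{D_LBB} produces unique pressures $p_1^h, p_2^h \in \mathbb W^h$, and together $(\textbf u^h, p^h) = (\bm\xi^h + \bm\eta^h,\, p_1^h + p_2^h)$ is the desired unique pair.

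For the deterministic equation I would fix $\textbf z^h \in \textbf V_{div}^h$ and consider the bilinear form $A_{z^h}(\bm\xi^h, \textbf v^h) := a(\bm\xi^h, \textbf v^h) + c(\textbf z^h, \bm\xi^h, \textbf v^h)$. Continuity follows from \eqref{a_1} and \eqref{c_2}, and coercivity is immediate because \eqref{c_4} kills the convective part when $\textbf v^h = \bm\xi^h$. Lax--Milgram then produces a unique $\bm\xi^h$, inducing a map $\pi^h : \textbf z^h \mapsto \bm\xi^h$. The contraction estimate is obtained exactly as in \eqref{34}--\eqref{35}: subtract two copies of the linearized discrete equation, test with $\bm\xi_1^h - \bm\xi_2^h$, use \eqref{c_4} to eliminate the term carrying $\textbf z_2^h$, and invoke the constant $\hat C_{c_5}$ together with the discrete stability bound \eqref{l4_1}. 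The first inequality in \eqref{ex_solution} is precisely what forces the contraction constant to be strictly less than one, so the Banach fixed point theorem yields a unique $\bm\xi^h \in \textbf V_{div}^h$.

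With $\bm\xi^h$ frozen, for the stochastic velocity I fix $\textbf s^h$ in the discrete divergence-free subspace of $\mathbb V^h$ and consider
$A_{s^h}(\bm\eta^h, \textbf v^h) := \mathbb E[a(\bm\eta^h, \textbf v^h)] + \mathbb E[c(\textbf s^h, \bm\eta^h, \textbf v^h)] + \mathbb E[c(\bm\eta^h, \bm\xi^h, \textbf v^h)] + \mathbb E[c(\bm\xi^h, \bm\eta^h, \textbf v^h)]$. Continuity is routine via \eqref{c_5}. The delicate step, as in \eqref{38}, is coercivity: testing against $\bm\eta^h$, \eqref{c_4} annihilates two trilinear terms and the surviving $\mathbb E[c(\bm\eta^h, \bm\xi^h, \bm\eta^h)]$ is absorbed into $\nu \|\bm\eta^h\|_{\mathbb V_0}^2$ by a Cauchy inequality with $\varepsilon$, using \eqref{l4_1} to control $\|\bm\xi^h\|_{\mathbb V_0}$. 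Lax--Milgram then yields a unique linearized solution and a map $\pi_1^h : \textbf s^h \mapsto \bm\eta^h$. Subtracting two copies, testing with $\bm\eta_1^h - \bm\eta_2^h$, collapsing cancellable cross-terms via \eqref{c_4}, and bounding the remainders through $\hat N_1$, $\hat N_2$, $\hat N_\eta$ combined with \eqref{l4_1} produces a Lipschitz estimate whose constant is made strictly less than one by the second inequality in \eqref{ex_solution}; Banach then delivers a unique $\bm\eta^h$.

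The main obstacle I expect is the coercivity of $A_{s^h}$: even after the antisymmetry cancellations, the cross-term $\mathbb E[c(\bm\eta^h, \bm\xi^h, \bm\eta^h)]$ genuinely couples $\bm\eta^h$ to $\bm\xi^h$, and absorbing it into the diffusion part is what forces the smallness assumptions in \eqref{ex_solution} to cascade from the deterministic step into the stochastic step via the bound \eqref{l4_1}. Once this technical point is handled, everything else is a faithful discrete shadow of Theorem~\ref{thm2}, and \eqref{D_LBB} finally supplies the unique pressure components, completing the construction of $(\textbf u^h, p^h)$.
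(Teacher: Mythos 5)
Your proposal is correct and matches the paper's intent exactly: the paper in fact omits the proof of this theorem, stating only that ``the existence and uniqueness of the discrete solution can be proven in a similar manner as for the continuous equation,'' and your argument is precisely that transplant of the proof of Theorem~\ref{thm2} --- Lax--Milgram plus a contraction mapping on the discrete divergence-free subspaces, with the discrete inf-sup condition \eqref{D_LBB} recovering the pressures. The only caveat worth recording (which the paper itself also glosses over) is that elements of $\textbf{V}^h_{div}$ are only discretely divergence-free, so the antisymmetry cancellation \eqref{c_4} does not hold exactly at the discrete level and strictly requires either a skew-symmetrized trilinear form or an extra absorbed remainder; this does not change the structure of the argument.
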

\begin{theorem}[Finite Element Error Estimate of the Gradient of the Velocity] \label{thm4_1}
	
	Let $\Omega \in \mathbb{R}^2$ be a bounded domain with Lipschitz boundary $\partial \Omega$ and the random body force $\textbf{F}+\sigma\frac{d\textbf W}{d\textbf x} \in L^2 \left(\Theta; H^{-1}(\Omega)\right). $ If a pair of finite element spaces  $\mathbb V^h\times \mathbb Q^h$  is used that satisfies the discrete inf-sup condition \eqref{D_LBB}  and let in addition \eqref{ex_solution} be satisfied, then there exists a unique pair of stochastic processes $(\textbf u^h, p^h)\in (\mathbb V^h\times \mathbb Q^h)$  satisfying equations\eqref{D_sSNS1}-\eqref{D_sSNS2} . Then, the following error estimate holds 
	\begin{align}
		&\quad\left\|\textbf{u}-\textbf{u}^h\right\|_{\mathbb V} \nonumber\\
		&\leq C\left(\left(1+\frac{\mathbb E\left[\left\|\textbf F+ \sigma\frac{d\textbf W}{d\textbf x}\right\|_{H^{-1}(\Omega)}\right]}{\nu^2}\right)\inf\limits_{\textbf{v}^h \in\textbf{V}^h_{div}}\left\|\textbf{u}-\textbf{v}^h\right\|_{\mathbb V}+\frac{1}{\nu}\inf\limits_{q^h \in\textbf{Q}^h}\left\|p-q^h \right\|\right),\\
		&\| p-p^h\|_{\mathbb W} \le \frac{C}{\beta_1^h}\nu\left(1+\frac{\|\textbf{F}\|
			_{H^{-1} (\Omega)}}{\nu^2}\right)\left\|\textbf{u}-\textbf{u}^h\right\|_{\mathbb V}+\left(1+\frac{C}{\beta_1^h}\right)\left\|p-q^h \right\|_{\mathbb W}\label{p_error}.
	\end{align}
	
\end{theorem}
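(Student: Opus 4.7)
The plan is to prove this through a standard Galerkin orthogonality argument combined with the coercivity of the diffusion term and careful handling of the trilinear convective term, culminating in a C\'ea-type inequality. First I would establish the error equation by subtracting the discrete weak formulation \eqref{D_sSNS1}-\eqref{D_sSNS2} from its continuous counterpart \eqref{w_SNS1}, noting that both the continuous and discrete test functions live in $\mathbb V^h \times \mathbb Q^h$. This yields a Galerkin orthogonality of the form $\mathbb E[a(\textbf{u}-\textbf{u}^h,\textbf{v}^h)] + \mathbb E[c(\textbf{u},\textbf{u},\textbf{v}^h)] - \mathbb E[c(\textbf{u}^h,\textbf{u}^h,\textbf{v}^h)] + \mathbb E[b(\textbf{v}^h, p-p^h)] = 0$ for all admissible discrete test pairs.

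Next, for arbitrary $\textbf{v}^h \in \textbf{V}^h_{div}$ and $q^h \in \textbf{Q}^h$, I would decompose $\textbf{u}-\textbf{u}^h = (\textbf{u}-\textbf{v}^h) + (\textbf{v}^h-\textbf{u}^h)$ and similarly split $p - p^h$. Selecting $\textbf{v}^h - \textbf{u}^h \in \mathbb V^h_{div}$ as the test function eliminates both pressure contributions (since $b(\textbf{v}^h - \textbf{u}^h, q^h - p^h) = 0$ by the divergence-free property of the test space, and one can insert $q^h$ freely). The nonlinear term is rewritten via the identity $c(\textbf{u},\textbf{u},\cdot) - c(\textbf{u}^h,\textbf{u}^h,\cdot) = c(\textbf{u}-\textbf{u}^h,\textbf{u},\cdot) + c(\textbf{u}^h,\textbf{u}-\textbf{u}^h,\cdot)$, and then further split according to the decomposition of $\textbf{u}-\textbf{u}^h$. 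Invoking the coercivity $\mathbb E[a(\textbf{v},\textbf{v})] = \nu\|\textbf{v}\|^2_{\mathbb V_0}$, the continuity bounds \eqref{c_5}, \eqref{b_1}, and the stability bound \eqref{l1} on $\|\textbf{u}\|_{\mathbb V_0}$ which produces the $\mathbb E[\|\textbf F+\sigma d\textbf W/d\textbf x\|_{H^{-1}}]/\nu^2$ factor, I would absorb the terms involving $\|\textbf{v}^h-\textbf{u}^h\|_{\mathbb V}$ on the left-hand side. This absorption step requires precisely the smallness condition \eqref{ex_solution}, which ensures the coefficient multiplying $\|\textbf{v}^h-\textbf{u}^h\|^2_{\mathbb V}$ remains positive. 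A triangle inequality followed by an infimum over $\textbf{v}^h$ and $q^h$ then delivers the velocity estimate.

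For the pressure estimate, I would return to the original error equation (this time without restricting to the divergence-free test space) and isolate $\mathbb E[b(\textbf{v}^h, p^h - q^h)]$. Applying the discrete inf-sup condition \eqref{D_LBB} to $p^h - q^h \in \mathbb W^h$ yields $\|p^h - q^h\|_{\mathbb W} \le (1/\hat\beta)\sup \mathbb E[b(\textbf{v}^h, p^h - q^h)]/\|\textbf{v}^h\|_{\mathbb V_0}$. The right-hand side is then bounded using the error equation, control of the trilinear term via \eqref{c_5} together with the stability bound $\|\textbf{u}\|_{\mathbb V_0} \le \|\textbf F\|_{H^{-1}}/\nu$ (which explains the $\|\textbf F\|_{H^{-1}}/\nu^2$ factor appearing in \eqref{p_error}), the continuity of $a(\cdot,\cdot)$, and the just-proved velocity error estimate. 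A final triangle inequality between $\|p-q^h\|_{\mathbb W}$ and $\|q^h-p^h\|_{\mathbb W}$ yields the result.

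The main obstacle will be the bookkeeping required to control the trilinear convective term across all three pieces of the decomposition while ensuring the constants remain uniform and the resulting coefficient of $\|\textbf{v}^h - \textbf{u}^h\|^2_{\mathbb V}$ to be absorbed on the left is indeed smaller than $\nu$. This is exactly where the smallness assumption \eqref{ex_solution} becomes indispensable; without it, the absorption fails and the argument collapses. Secondary care is needed for the stochastic setting, where applications of H\"older and Cauchy--Schwarz must act both spatially and with respect to $\mu$, so that expected values of trilinear quantities are controlled by the $\mathbb V$-norms rather than pointwise $H^1_0(\Omega)$-norms.
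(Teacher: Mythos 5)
Your strategy is sound and is essentially the standard one: the paper itself omits the proof of this theorem entirely (deferring to Hou et al.\ 2006), but your outline---error decomposition into approximation error and discrete remainder, Galerkin orthogonality, splitting of the trilinear difference, Young's inequality and absorption under the smallness condition, triangle inequality, and the discrete inf-sup condition for the pressure---is precisely the technique the paper carries out in full for the analogous Theorem~\ref{thm4_4}. The only imprecision is your claim that testing with a discretely divergence-free function ``eliminates both pressure contributions'': only the $p^h$ part vanishes, while $b(\textbf{v}^h-\textbf{u}^h,\,p-q^h)$ survives and must be bounded by Young's inequality, which is exactly what produces the $\frac{1}{\nu}\inf_{q^h}\|p-q^h\|$ term in the velocity estimate.
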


We omit the proof to save space and refer the interested reader to \cite{Hou_2006} for a detailed proof.
\subsection{Statistical error of the splitting method }

In this section, we will illustrate the accuracy and robustness of this splitting theory. In the finite element error analysis of the steady-state Navier-Stokes equations,  an estimate of the nonlinear (trilinear) term is necessary. For the splitting theory, the original problem is decomposed into a deterministic NS equation and a stochastic  equation. This decomposition allows us to focus on the influence of nonlinear terms primarily in the stochastic equation and analyze their behavior independently. The deterministic equation captures the main behavior of the system, while the stochastic equation accounts for the random fluctuations or uncertainties.

The following theorem establishes the error estimate for the solutions of the splitting stochastic NS equations.
\begin{theorem}[Error Estimation for the  Splitting Method]\label{thm4_4}
	Let $\Omega \in \mathbb{R}^2$ be a bounded domain with polyhedral and Lipschitz continuous boundary, let \eqref{Th1} be fulfilled, and let instead of \eqref{ex_solution} the stronger condition  
	\begin{equation}\label{stronger condition}
		\frac{N_{\eta}}{\nu}+\frac{\|\textbf{F}\|_{H^{-1} (\Omega)}}{\nu^2}\le \frac{7}{8} .   
	\end{equation}
	Meanwhile, $\bm{\xi}^h, p_1^h, p_2^h, \bm{\eta}^h$ be the unique solution of the Navier-Stokes equations \eqref{D_sSNS1}-\eqref{D_sSNS2}. Assume that this problem is discretized with inf-sup stable finite element spaces. Then, the following error estimate holds
	\begin{flalign}
		\left\|\bm{\xi}-\bm{\xi}^h\right\|_{\mathbb V} &\leq C\left\{\left(1+\frac{\|\textbf{F}\|
			_{H^{-1} (\Omega)}}{\nu^2}\right)\inf\limits_{\textbf{v}^h \in\textbf{V}^h_{div}}\left\|\bm{\xi}-\textbf{v}^h\right\|_{\mathbb V}+\frac{1}{\nu}\inf\limits_{q^h \in\textbf{Q}^h}\left\|p_1-q^h \right\|\right\},\\
		\left\| p_1-p_1^h\right\|_{\mathbb W} &\le \frac{C}{\beta_1^h}\nu\left(1+\frac{\|\textbf{F}\|
			_{H^{-1} (\Omega)}}{\nu^2}\right)\left\|\bm{\xi}-\bm{\xi}^h\right\|_{\mathbb V}+\left(1+\frac{C}{\beta_1^h}\right)\left\|p_1-q^h \right\|_{\mathbb W},\\
		\left\|\bm{\eta}-\bm{\eta}^h\right\|_{\mathbb V}& \le C\left\{\inf\limits_{\textbf{v}^h \in\textbf{V}^h_{div}}\left\|\bm{\eta}-\textbf{v}^h\right\|_{\mathbb V}+\frac{N_{\eta}N_{\bm{\xi}-\bm{\xi}^h}}{\nu}  \right\},\\
		\| p_2-p_2^h\|_{\mathbb W} &\le \frac{C}{\beta_1^h} N_{\eta}\left\|\bm{\xi}-\bm{\xi}^h\right\|_{\mathbb V}+\frac{C}{\beta_1^h}\nu\left(\nu+\frac{\|\textbf{F}\|
			_{H^{-1} (\Omega)}}{\nu}+N_{\eta}\right)\left\|\bm{\eta}-\bm{\eta}^h\right\|_{\mathbb V}\nonumber\\
		&\quad+\left(1+\frac{C}{\beta_1^h}\right)\left\|p_2-q^h \right\|_{\mathbb W},
	\end{flalign}
	where the constant $N_{\bm{\xi}-\bm{\xi}^h}=C\left\{\left(1+\frac{\|\textbf{F}\|
		_{H^{-1} (\Omega)}}{\nu^2}\right)\inf\limits_{\textbf{v}^h \in\textbf{V}^h_{div}}\left\|\bm{\xi}-\textbf{v}^h\right\|_{\mathbb V}+\frac{1}{\nu}\inf\limits_{q^h \in\textbf{Q}^h}\left\|p_1-q^h \right\|\right\}$ and the constant C does not depend on the mesh size.
\end{theorem}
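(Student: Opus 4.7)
The plan is to establish the four bounds in the order they are stated, exploiting the fact that the splitting decouples the error analysis into a deterministic subproblem that feeds into a linearly coupled stochastic subproblem. For the first estimate, I would subtract the discrete equation \eqref{D_sSNS1} from its continuous counterpart \eqref{w_sSNS1} restricted to test functions in $\textbf{V}^h_{div}$, obtaining a Galerkin orthogonality identity. Writing $\bm{\xi}-\bm{\xi}^h = (\bm{\xi}-\textbf{v}^h) + (\textbf{v}^h-\bm{\xi}^h)$ for arbitrary $\textbf{v}^h \in \textbf{V}^h_{div}$, I test with $\textbf{w}^h = \textbf{v}^h - \bm{\xi}^h$ and use coercivity of $a$. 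The nonlinear difference splits as $c(\bm{\xi},\bm{\xi},\textbf{w}^h) - c(\bm{\xi}^h,\bm{\xi}^h,\textbf{w}^h) = c(\bm{\xi}-\bm{\xi}^h,\bm{\xi},\textbf{w}^h) + c(\bm{\xi}^h,\bm{\xi}-\bm{\xi}^h,\textbf{w}^h)$, and the stability bound \eqref{l3} together with the smallness assumption in \eqref{Th1} allows the resulting $\|\bm{\xi}-\bm{\xi}^h\|_{\mathbb V}^2$ contribution to be absorbed into the left. The pressure enters through $b(\textbf{w}^h, p_1)$, which I rewrite as $b(\textbf{w}^h, p_1 - q^h)$ using the discrete divergence-free constraint on $\textbf{w}^h$; taking infima over $\textbf{v}^h$ and $q^h$ finishes the first estimate.

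For $\|p_1-p_1^h\|_{\mathbb W}$ I apply the discrete inf-sup condition \eqref{D_LBB} to $q^h - p_1^h$ for an arbitrary $q^h \in \textbf{Q}^h$. The bilinear form $\mathbb E[b(\textbf{v}^h, q^h-p_1^h)]$ is rewritten through the difference of the two momentum equations as an $a$-term bounded by $\nu\|\bm{\xi}-\bm{\xi}^h\|_{\mathbb V}$ plus the same interleaved trilinear difference, now producing the factor $(\|\textbf{F}\|_{H^{-1}}/\nu)\|\bm{\xi}-\bm{\xi}^h\|_{\mathbb V}$ after using \eqref{l3}. A triangle inequality against $\|p_1-q^h\|_{\mathbb W}$ delivers the second bound.

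The estimate for $\|\bm{\eta}-\bm{\eta}^h\|_{\mathbb V}$ is the main obstacle because subtracting \eqref{D_sSNS2} from \eqref{w_sSNS2} produces four trilinear differences mixing $\bm{\eta},\bm{\eta}^h,\bm{\xi},\bm{\xi}^h$. I linearize each by an interleaving of the form $c(\bm{\eta},\bm{\xi},\cdot) - c(\bm{\eta}^h,\bm{\xi}^h,\cdot) = c(\bm{\eta}-\bm{\eta}^h,\bm{\xi},\cdot) + c(\bm{\eta}^h,\bm{\xi}-\bm{\xi}^h,\cdot)$. Terms that are quadratic in $\bm{\eta}-\bm{\eta}^h$ or of type $c(\cdot,\bm{\xi},\cdot)$ are controlled via $\|\bm{\eta}^h\|_{\mathbb V}\le \hat N_{\eta}$ from \eqref{l4_3} and $\|\bm{\xi}\|_{\mathbb V}\le \|\textbf{F}\|_{H^{-1}}/\nu$ from \eqref{l3}; the strengthened hypothesis \eqref{stronger condition} is precisely what forces the coefficient multiplying $\|\bm{\eta}-\bm{\eta}^h\|_{\mathbb V}^2$ on the right to be strictly below $\nu$, so that absorption into the coercivity term closes the argument, rather than merely producing a geometric-series coefficient as in \eqref{ex_solution}. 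The cross-terms contribute $(N_{\eta}/\nu)\|\bm{\xi}-\bm{\xi}^h\|_{\mathbb V}$, and the first estimate is exactly what is packaged into the constant $N_{\bm{\xi}-\bm{\xi}^h}$ appearing in the theorem. Finally, $\|p_2-p_2^h\|_{\mathbb W}$ is obtained by reusing the inf-sup argument of the deterministic pressure step, now applied to the stochastic momentum difference: the interleaved trilinear differences are bounded by either $\|\bm{\eta}-\bm{\eta}^h\|_{\mathbb V}$ with coefficient $\nu + \|\textbf{F}\|_{H^{-1}}/\nu + N_{\eta}$, or by $N_{\eta}\|\bm{\xi}-\bm{\xi}^h\|_{\mathbb V}$, and a triangle inequality against $\|p_2-q^h\|_{\mathbb W}$ completes the proof. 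The delicate step throughout is coordinating the interleaving and absorption so that the final constants match those in the theorem statement exactly.
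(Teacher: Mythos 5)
Your proposal follows essentially the same route as the paper's proof: the same splitting of the error into an approximation part and a discrete remainder tested against the equation, the same interleaving of the trilinear differences, absorption via the stability bounds \eqref{l3}, \eqref{l4_3} and the strengthened condition \eqref{stronger condition}, and the same discrete inf-sup plus triangle-inequality argument for both pressures. The only cosmetic difference is that you carry an arbitrary $\textbf{v}^h\in\textbf{V}^h_{div}$ where the paper fixes an interpolant $I^h\bm{\xi}$ and takes the infimum at the end, which is equivalent.
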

\begin{proof} To establish the error estimate, we use test function $\textbf{v}^h \in \textbf{V}^h_{div}$ and subtract the discrete formulation  from
	the weak formulation of the steady-state NS equation
	\begin{gather}
		a(\bm{\xi}-\bm{\xi}^h, \textbf{v}^h)+c(\bm{\xi},\bm{\xi}, \textbf{v}^h) -c(\bm{\xi}^h,\bm{\xi}^h, \textbf{v}^h)= 0, a.s.\label{e_1} \\
		a(\bm{\eta}-\bm{\eta}^h,\textbf{v}^h)+c(\bm{\eta}, \bm{\eta}, \textbf{v}^h)-c(\bm{\eta}^h, \bm{\eta}^h, \textbf{v}^h)+c(\bm{\eta},\bm{\xi}, \textbf{v}^h)-c(\bm{\eta}^h,\bm{\xi}^h,\textbf{v}^h)\nonumber\\
		\qquad\qquad\qquad+c(\bm{\xi}, \bm{\eta},\textbf{v}^h) -c(\bm{\xi}^h, \bm{\eta}^h,\textbf{v}^h) = 0, a.s.\label{e_2}
	\end{gather}
	1) To deal with error equation \eqref{e_1}, we decompose the error into the approximate error and the discrete remainder
	\begin{equation}\label{e_3}
		\bm{\xi}-\bm{\xi}^h = \left(\bm{\xi}-I^h\bm{\xi}\right)- \left(\bm{\xi}^h-I^h\bm{\xi}\right)=\bm{\psi}_1-\bm{\psi}_2^h,    \qquad I^h\bm{\xi}\in V^h_{div}.
	\end{equation}
	Inserting this decomposition in the error equation and setting  $\textbf{v}^h=\bm{\psi}_2^h$ leads to
	\begin{equation}\label{e_4}
		\nu\left\|\nabla \bm{\psi}_2^h \right\|^2= \nu\left(\nabla\bm{\psi}_1,\nabla\bm{\psi}_2^h\right)+c(\bm{\xi}, \bm{\xi}, \bm{\psi}_2^h)-c(\bm{\xi}^h, \bm{\xi}^h, \bm{\psi}_2^h) -b(\bm{\psi}_2^h, p_1-q^h).
	\end{equation}
	Obviously, using the Young's inequality, we obtain
	$$
	b(\bm{\psi}_2^h, p_1-q^h)=(\nabla\cdot\bm{\psi}_2^h, p_1-q^h)\leq \frac{2}{\nu}\left\|p_1-q^h \right\|^2+ \frac{\nu}{8} \left\|\nabla \bm{\psi}_2^h \right\|^2,
	$$
	and
	\begin{equation}\label{e_5}
		\nu\left(\nabla\bm{\psi}_1,\nabla\bm{\psi}_2^h\right)\leq 2\nu\left\|\nabla\bm{\psi}_1 \right\|^2+ \frac{\nu}{8} \left\|\nabla \bm{\psi}_2^h \right\|^2.
	\end{equation}
	The first two trilinear terms are estimated as follows
	\begin{align} \label{e_6}
		&\quad c(\bm{\xi}, \bm{\xi}, \bm{\psi}_2^h)-c(\bm{\xi}^h, \bm{\xi}^h, \bm{\psi}_2^h)\nonumber\\
		&=c\left(\bm{\xi}, \bm{\xi}-\bm{\xi}^h, \bm{\psi}_2^h\right)+c\left(\bm{\xi}-\bm{\xi}^h, \bm{\eta}^h, \bm{\psi}_2^h\right)\nonumber\\
		&=c\left(\bm{\xi}, \bm{\psi}_1, \bm{\psi}_2^h\right)-c\left(\bm{\xi},\bm{\psi}_2^h,\bm{\psi}_2^h\right)+c\left(\bm{\psi}_1,\bm{\xi}^h,\bm{\psi}_2^h\right)-c\left(\bm{\psi}_2^h, \bm{\xi}^h,\bm{\psi}_2^h\right)\nonumber\\
		&=c\left(\bm{\xi}, \bm{\psi}_1,\bm{\psi}_2^h\right)+c\left(\bm{\psi}_1,\bm{\xi}^h,\bm{\psi}_2^h\right)-c\left(\bm{\psi}_2^h,\bm{\xi}^h,\bm{\psi}_2^h\right).
	\end{align}
	For the first term in equation \eqref{e_6}, using \eqref{e_5} yields
	\begin{align} 
		c\left(\bm{\xi}, \bm{\psi}_1,\bm{\psi}_2^h\right)&\leq C_{c_6}\left\|\nabla\bm{\xi}\right\|\left\|\nabla\bm{\psi}_1 \right\| \left\|\nabla \bm{\psi}_2^h \right\|\nonumber \\
		& \leq \frac{2C_{c_6}}{\nu}\left\|\nabla\bm{\psi}_1 \right\|^2\left\|\nabla\bm{\xi}\right\|^2+ \frac{\nu}{8} \left\|\nabla \bm{\psi}_2^h \right\|^2\nonumber \\
		& \leq \frac{C_{c_6}}{\nu^3}\left\|\nabla\bm{\psi}_1 \right\|^2\|\textbf{F}\|^2
		_{H^{-1} (\Omega)}+ \frac{\nu}{8} \left\|\nabla \bm{\psi}_2^h \right\|^2 .\label{e_7}
	\end{align}
	The estimation for the second term of equation \eqref{e_6} is performed analogously, yielding
	\begin{align}
		&\quad c\left(\bm{\psi}_1,\bm{\xi}^h,\bm{\psi}_2^h\right)\nonumber\\
		&\leq \frac{2C_{c_7}}{\nu}\left\|\nabla\bm{\psi}_1 \right\|^2\left\|\nabla\bm{\xi}^h\right\|^2+ \frac{\nu}{8} \left\|\nabla \bm{\psi}_2^h \right\|^2 \nonumber\\
		&\leq \frac{C_{c_7}}{\nu^3}\left\|\nabla\bm{\psi}_1 \right\|^2\|\textbf{F}\|^2
		_{H^{-1} (\Omega)}+ \frac{\nu}{8} \left\|\nabla \bm{\psi}_2^h \right\|^2,
	\end{align}
	and
	$$
	c\left(\bm{\psi}_2^h,\bm{\xi}^h,\bm{\psi}_2^h\right)\leq C_{c_8}\left\|\nabla\bm{\xi}^h\right\|\left\|\nabla \bm{\psi}_2^h \right\|^2 \leq \frac{4C_{c_8}\|\textbf{F}\|
		_{\textbf H^{-1}(\Omega)}}{\nu^2}\left(\frac{\nu}{4} \left\|\nabla \bm{\psi}_2^h \right\|^2\right)\leq \frac{\nu}{4} \left\|\nabla \bm{\psi}_2^h \right\|^2.
	$$
	Substitute the above error estimates into equation \eqref{e_4}, we get
	$$
	\left\|\nabla \bm{\psi}_2^h \right\|^2 \leq C\left(\left\|\nabla\bm{\psi}_1 \right\|^2+\frac{1}{\nu^4}\left\|\nabla\bm{\psi}_1 \right\|^2\|\textbf{F}\|^2
	_{H^{-1} (\Omega)}+\frac{1}{\nu^2}\left\|p_1-q^h \right\|^2\right).
	$$
	Finally, the error estimation of equation \eqref{e_1} is drawn by using the triangle inequality
	\begin{equation} \label{e_8}
		\begin{split}
			\left\|\bm{\xi}-\bm{\xi}^h\right\|_{\mathbb V}&\leq \left\|\nabla\bm{\psi}_1\right\|+\left\|\nabla\bm{\psi}_2^h\right\|\leq C\left(\left\|\nabla\bm{\psi}_1 \right\|+\frac{\|\textbf{F}\|
				_{H^{-1} (\Omega)}}{\nu^2}\left\|\nabla\bm{\psi}_1 \right\|+\frac{1}{\nu}\left\|p_1-q^h \right\|\right)\\
			&\leq C\left\{\left(1+\frac{\|\textbf{F}\|
				_{H^{-1} (\Omega)}}{\nu^2}\right)\inf\limits_{\textbf{v}^h \in\bm{\xi}^h_{div}}\left\|\nabla(\bm{\xi}-\textbf{v}^h)\right\|+\frac{1}{\nu}\inf\limits_{q^h \in\textbf{Q}^h}\left\|p_1-q^h \right\|\right\}.
		\end{split}
	\end{equation}
	On the other hand, we consider the following  finite element error estimate for the pressure.
	Observe that by the triangle inequality, one has
	\begin{equation}
		\|p_1-p^h\|_{\mathbb W}\le \|p_1-q^h\|_{\mathbb W} +\|p_1^h-q^h\|_{\mathbb W}.\label{p}
	\end{equation}
	Next, we handle the term $\|p_1^h-q^h\|_{\mathbb W}$. By the discrete inf-sup condition \eqref{D_LBB} and the insertion of the finite element problem \eqref{D_sSNS1} as well as the variational form of the steady-state Navier-Stokes equations \eqref{w_sSNS1}, we have
	\begin{equation}
		\begin{split}
			&\quad \| p_1^h-q^h\|_{\mathbb W}\\
			&\le  \frac{1}{\beta_1^h}\sup_{0\neq\textbf v^h\in \mathbb V^h_0}\frac{\mathbb E[b(\textbf{v}^h, p_1^h-q^h)]}{\|\textbf v^h\|_{\mathbb V^h_0}} \\
			&=\frac{1}{\beta_1^h}\sup_{0\neq\textbf v^h\in \mathbb V^h_0}\frac{\mathbb E[a(\bm{\xi}-\bm{\xi}^h,\textbf v^h)]-\mathbb E[b(\textbf{v}^h, p_1-q^h)]+\mathbb E[c(\bm{\xi}, \bm{\xi},\textbf v^h)]-\mathbb E[c(\bm{\xi}^h, \bm{\xi}^h,\textbf v^h)]}{\|\textbf v^h\|_{\mathbb V^h_0}}\\
			&=\frac{1}{\beta_1^h}\sup_{0\neq\textbf v^h\in \mathbb V^h_0}\frac{\mathbb E[a(\bm{\xi}-\bm{\xi}^h,\textbf v^h)]-\mathbb E[b(\textbf{v}^h, p_1-q^h)]+\mathbb E[c(\bm{\xi}, \bm{\xi}-\bm{\xi}^h,\textbf v^h)]-\mathbb E[c(\bm{\xi}-\bm{\xi}^h, \bm{\xi}^h,\textbf v^h)]}{\|\textbf v^h\|_{\mathbb V^h_0}}\label{p^h1}.
		\end{split}
	\end{equation}
	Now, we use the Cauchy-Schwarz inequality
	to yield that
	\begin{align}
		&\quad\| p_1^h-q^h\|_{\mathbb W}\nonumber\\
		&\le  \frac{1}{\beta_1^h}\left(\nu\|\bm{\xi}-\bm{\xi}^h\|_{\mathbb V^h_0}+C\|\bm{\xi}\|_{\mathbb V^h_0}\|\bm{\xi}-\bm{\xi}^h\|_{\mathbb V^h_0}+C\|\bm{\xi}^h\|_{\mathbb V^h_0}\|\bm{\xi}-\bm{\xi}^h\|_{\mathbb V^h_0}+\|p_1-q^h\|_{\mathbb W} \right)   \nonumber\\
		&=\frac{C}{\beta_1^h}\left(\left(\nu+\frac{\|\textbf{F}\|
			_{H^{-1} (\Omega)}}{\nu}\right)\left\|\bm{\xi}-\bm{\xi}^h\right\|_{\mathbb V}+\left\|p_1-q^h \right\|_{\mathbb W}\right)\label{p^h}.
	\end{align}
	Plugging this into \eqref{p}, we conclude that
	\begin{equation}
		\| p_1-p_1^h\|_{\mathbb W} \le \frac{C}{\beta_1^h}\nu\left(1+\frac{\|\textbf{F}\|
			_{H^{-1} (\Omega)}}{\nu^2}\right)\left\|\bm{\xi}-\bm{\xi}^h\right\|_{\mathbb V}+\left(1+\frac{C}{\beta_1^h}\right)\left\|p_1-q^h \right\|_{\mathbb W}\label{ph_error}.
	\end{equation}
	2)For the error equation \eqref{e_2}, we also decompose the error into the approximate error and the discrete remainder
	\begin{equation}\label{e_9}
		\bm{\eta}-\bm{\eta}^h = \left(\bm{\eta}-I^h\bm{\eta}\right)- \left(\bm{\eta}^h-I^h\bm{\eta}\right)=\bm{\psi}_3-\bm{\psi}_4^h,    \qquad I^h\bm{\eta}\in V^h_{div}.
	\end{equation}
	Inserting this decomposition in the error equation and setting  $\textbf{v}^h=\bm{\psi}_4^h$ leads to
	\begin{align}\label{e_10}
		\nu\left\|\bm{\psi}_4^h \right\|^2_{\mathbb V}= &\nu\mathbb E\left[\left(\nabla\bm{\psi}_3,\nabla\bm{\psi}_4^h\right)\right]+\mathbb E\left[c(\bm{\eta}, \bm{\eta}, \bm{\psi}_4^h)\right]-\mathbb E\left[c(\bm{\eta}^h, \bm{\eta}^h, \bm{\psi}_4^h)\right]+\mathbb E\left[c(\bm{\eta},\bm{\xi}, \bm{\psi}_4^h)\right]\nonumber\\
		&-\mathbb E\left[c(\bm{\eta}^h,\bm{\xi}^h,\bm{\psi}_4^h)\right]+\mathbb E\left[c(\bm{\xi}, \bm{\eta},\bm{\psi}_4^h)\right] -\mathbb E\left[c(\bm{\xi}^h, \bm{\eta}^h,\bm{\psi}_4^h)\right]-\mathbb E\left[(\bm{\psi}_4^h, p_2-q^h)\right].
	\end{align}
	According to the Young's inequality, we obtain
	$$
	\mathbb E\left[(\bm{\psi}_4^h, p_2-q^h)\right]\leq \frac{2}{\nu}\left\|p_2-q^h \right\|^2+ \frac{\nu}{8} \left\|\nabla \bm{\psi}_4^h \right\|^2.
	$$
	Obviously, the estimate for the first item is
	\begin{equation}\label{e_11}
		\nu\mathbb E\left[\left(\nabla\bm{\psi}_3,\nabla\bm{\psi}_4^h\right)\right]\leq 2\nu\left\|\bm{\psi}_3 \right\|^2_{\mathbb V}+ \frac{\nu}{8} \left\|\bm{\psi}_4^h \right\|^2_{\mathbb V}.
	\end{equation}
	The first two trilinear terms are estimated as follows
	\begin{align} \label{e_12}
		&\quad\mathbb E\left[c(\bm{\eta}, \bm{\eta}, \bm{\psi}_4^h)\right]-\mathbb E\left[c(\bm{\eta}^h, \bm{\eta}^h, \bm{\psi}_4^h)\right]\nonumber\\
		&=\mathbb E\left[c\left(\bm{\eta}, \bm{\eta}-\bm{\eta}^h, \bm{\psi}_4^h\right)\right]+\mathbb E\left[c\left(\bm{\eta}-\bm{\eta}^h, \bm{\eta}^h, \bm{\psi}_4^h\right)\right]\nonumber\\
		&=\mathbb E\left[c\left(\bm{\eta}, \bm{\psi}_3, \bm{\psi}_4^h\right)\right]-\mathbb E\left[c\left(\bm{\eta},\bm{\psi}_4^h,\bm{\psi}_4^h\right)\right]+\mathbb E\left[c\left(\bm{\psi}_3,\bm{\eta}^h,\bm{\psi}_4^h\right)\right]-\mathbb E\left[c\left(\bm{\psi}_4^h, \bm{\eta}^h,\bm{\psi}_4^h\right)\right]\nonumber\\
		&=\mathbb E\left[c\left(\bm{\eta}, \bm{\psi}_3,\bm{\psi}_4^h\right)\right]+\mathbb E\left[c\left(\bm{\psi}_3,\bm{\eta}^h,\bm{\psi}_4^h\right)\right]-\mathbb E\left[c\left(\bm{\psi}_4^h,\bm{\eta}^h,\bm{\psi}_4^h\right)\right].
	\end{align}
	For the first term in equation \eqref{e_12}, using \eqref{e_11} yields
	\begin{equation} \label{e_13}
		\begin{split}
			\mathbb E\left[c\left(\bm{\eta}, \bm{\psi}_3,\bm{\psi}_4^h\right)\right]\leq C_{c_9}\left\|\bm{\eta}\right\|_{\mathbb V}\left\|\bm{\psi}_3 \right\|_{\mathbb V} \left\|\bm{\psi}_4^h \right\|_{\mathbb V}  \leq \frac{2C_{c_9}}{\nu}\left\|\bm{\psi}_3 \right\|^2_{\mathbb V}\left\|\bm{\eta}\right\|^2_{\mathbb V}+ \frac{\nu}{8} \left\| \bm{\psi}_4^h \right\|^2_{\mathbb V}. 
		\end{split}
	\end{equation}
	The estimation for the second term of equation \eqref{e_12} is performed analogously, yielding
	\begin{equation}
		\begin{split}
			\mathbb E\left[c\left(\bm{\psi}_3,\bm{\eta}^h,\bm{\psi}_4^h\right)\right] &\leq \frac{2C_{c_{10}}}{\nu}\left\|\bm{\psi}_3 \right\|^2_{\mathbb V}\left\|\bm{\eta}^h\right\|^2_{\mathbb V}+ \frac{\nu}{8} \left\| \bm{\psi}_4^h \right\|^2_{\mathbb V},
		\end{split}
	\end{equation}
	and
	\begin{align}
		&\quad  \mathbb E\left[c\left(\bm{\psi}_4^h,\bm{\eta}^h,\bm{\psi}_4^h\right)\right]\nonumber\\
		&\leq C_{c_{11}}\left\|\bm{\eta}^h\right\|_{\mathbb V}\left\|\bm{\psi}_4^h \right\|^2_{\mathbb V} \nonumber\\
		&\leq C_{c_{11}}\left(\frac{\mathbb E\left[\left\|\sigma\frac{d\textbf W}{d\textbf x}\right\|_{H^{-1}(\Omega)}\right]+\hat{C}_b\| p^h\|_{\mathbb W}}{\nu- \hat{C}_{c_4}\frac{\|\textbf{F}\|
				_{H^{-1} (\Omega)}}{\nu}}\right)\left\|\bm{\psi}_4^h \right\|^2_{\mathbb V}.
	\end{align}
	Similarly, we can calculate
	\begin{align} \label{e1}
		&\quad \mathbb E\left[c\left(\bm{\eta}, \bm{\xi}, \bm{\psi}_4^h\right)\right]- \mathbb E\left[c\left(\bm{\eta}^h, \bm{\xi}^h, \bm{\psi}_4^h\right)\right]\nonumber\\
		&=\mathbb E\left[c\left(\bm{\eta}, \bm{\xi}-\bm{\xi}^h, \bm{\psi}_4^h\right)\right]+\mathbb E\left[c\left(\bm{\eta}-\bm{\eta}^h, \bm{\xi}^h, \bm{\psi}_4^h\right)\right]\nonumber\\
		&=\mathbb E\left[c\left(\bm{\eta}, \bm{\xi}-\bm{\xi}^h, \bm{\psi}_4^h\right)\right]+\mathbb E\left[c\left(\bm{\psi}_3,\bm{\xi}^h,\bm{\psi}_4^h\right)\right]-\mathbb E\left[c\left(\bm{\psi}_4^h, \bm{\xi}^h,\bm{\psi}_4^h\right)\right],
	\end{align}
	where
	\begin{equation}\label{e2}
		\begin{split}
			\mathbb E\left[c\left(\bm{\eta}, \bm{\xi}-\bm{\xi}^h, \bm{\psi}_4^h\right)\right] &\leq \frac{2C_{c_{12}}}{\nu}\left\|\bm{\eta} \right\|^2_{\mathbb V}\left\|\bm{\xi}-\bm{\xi}^h\right\|^2_{\mathbb V}+ \frac{\nu}{8} \left\| \bm{\psi}_4^h \right\|^2_{\mathbb V}, \\ 
			\mathbb E\left[c\left(\bm{\psi}_3,\bm{\xi}^h,\bm{\psi}_4^h\right)\right]&\leq \frac{2C_{c_{13}}}{\nu}\left\|\bm{\psi}_3\right\|^2_{\mathbb V}\left\|\bm{\xi}^h\right\|^2_{\mathbb V}+ \frac{\nu}{8} \left\| \bm{\psi}_4^h \right\|^2_{\mathbb V},\\
			\mathbb E\left[c\left(\bm{\psi}_4^h, \bm{\xi}^h,\bm{\psi}_4^h\right)\right]&\leq C_{c_{14}}\left\|\bm{\xi}^h\right\|_{\mathbb V}\left\|\bm{\psi}_4^h \right\|^2_{\mathbb V}.
		\end{split}
	\end{equation}
	Naturally, we have
	\begin{equation} \label{e3}
		\begin{split}
			\mathbb E\left[c\left(\bm{\xi},\bm{\eta},\bm{\psi}_4^h\right)\right]-\mathbb E\left[c\left(\bm{\xi}^h,\bm{\eta}^h,\bm{\psi}_4^h\right)\right]&=\mathbb E\left[c\left(\bm{\xi}, \bm{\eta}-\bm{\eta}^h, \bm{\psi}_4^h\right)\right]+\mathbb E\left[c\left(\bm{\xi}-\bm{\xi}^h, \bm{\eta}^h, \bm{\psi}_4^h\right)\right]\\
			&=\mathbb E\left[c\left(\bm{\xi}, \bm{\psi}_3,\bm{\psi}_4^h\right)\right]+\mathbb E\left[c\left(\bm{\xi}-\bm{\xi}^h, \bm{\eta}^h, \bm{\psi}_4^h\right)\right],
		\end{split}
	\end{equation}
	where
	\begin{equation}\label{e4}
		\begin{split}
			\mathbb E\left[c\left(\bm{\xi},\bm{\psi}_3,\bm{\psi}_4^h\right)\right]&\leq \frac{2C_{c_{15}}}{\nu}\left\|\bm{\xi}\right\|^2_{\mathbb V}\left\|\bm{\psi}_3\right\|^2_{\mathbb V}+ \frac{\nu}{8} \left\| \bm{\psi}_4^h \right\|^2_{\mathbb V},\\
			\mathbb E\left[c\left(\bm{\xi}-\bm{\xi}^h,\bm{\eta}^h,  \bm{\psi}_4^h\right)\right] &\leq \frac{2C_{c_{16}}}{\nu}\left\|\bm{\xi}-\bm{\xi}^h\right\|^2_{\mathbb V}\left\|\bm{\eta}^h \right\|^2_{\mathbb V}+ \frac{\nu}{8} \left\| \bm{\psi}_4^h \right\|^2_{\mathbb V}.
		\end{split}
	\end{equation}
	
	According to equation \eqref{stronger condition}, substitute the above error estimates into equation \eqref{e_10}, we get
	\begin{equation} 
		\begin{split}
			\left\|\bm{\psi}_4^h \right\|^2_{\mathbb V} &\leq C\left\{\left(1+\frac{N^2_{\eta}}{\nu^2}+\frac{\|\textbf{F}\|^2
				_{H^{-1} (\Omega)}}{\nu^4} \right)\left\|\bm{\psi}_3 \right\|^2_{\mathbb V}+\frac{N^2_{\eta}N^2_{\bm{\xi}-\bm{\xi}^h}}{\nu^2}  \right\}\\
			&\leq C\left\{\left\|\bm{\psi}_3 \right\|^2_{\mathbb V}+\frac{N^2_{\eta}N^2_{\bm{\xi}-\bm{\xi}^h}}{\nu^2}  \right\}.
			\nonumber
		\end{split}
	\end{equation}
	Finally, the error estimation of equation \eqref{e_2} is drawn by using the triangle inequality
	\begin{equation} 
		\begin{split}
			\left\|\bm{\eta}-\bm{\eta}^h\right\|_{\mathbb V}&\leq \left\|\bm{\psi}_3\right\|_{\mathbb V}+\left\|\bm{\psi}_4^h\right\|_{\mathbb V}\\
			&\le C\left\{\left\|\bm{\psi}_3 \right\|_{\mathbb V}+\frac{N_{\eta}N_{\bm{\xi}-\bm{\xi}^h}}{\nu}  \right\}\\
			&\le C\left\{\inf\limits_{\textbf{v}^h \in\textbf{V}^h_{div}}\left\|\bm{\eta}-\textbf{v}^h\right\|_{\mathbb V}+\frac{N_{\eta}N_{\bm{\xi}-\bm{\xi}^h}}{\nu}  \right\}.\nonumber
		\end{split}
	\end{equation}
	On the other hand, we consider the following  finite element error estimate for the pressure.
	Observe that by the triangle inequality, one has
	\begin{equation}
		\|p_2-p_2^h\|_{\mathbb W}\le \|p_2-q^h\|_{\mathbb W} +\|p_2^h-q^h\|_{\mathbb W}.\label{p2}
	\end{equation}
	Next, we handle the term $\|p_2^h-q^h\|_{\mathbb W}$. By the discrete inf-sup condition \eqref{D_LBB} and the insertion of the finite element problem \eqref{D_sSNS2} as well as the variational form of the steady-state Navier-Stokes equations \eqref{w_sSNS2}, we have
	\begin{align*}
		&\quad	\| p_2^h-q^h\|_{\mathbb W}\\
		&\le  \frac{1}{\beta_1^h}\sup_{0\neq\textbf v^h\in \mathbb V^h_0}\frac{\mathbb E[b(\textbf{v}^h, p_2^h-q^h)]}{\|\textbf v^h\|_{\mathbb V^h_0}} \\
		&=\frac{1}{\beta_1^h}\sup_{0\neq\textbf v^h\in \mathbb V^h_0}\frac{\mathbb E[a(\bm{\eta}-\bm{\eta}^h,\textbf v^h)]-\mathbb E[b(\textbf{v}^h, p_2-q^h)]+\mathbb E\left[c(\bm{\eta}, \bm{\eta}, \textbf{v}^h)\right]-\mathbb E\left[c(\bm{\eta}^h, \bm{\eta}^h, \textbf{v}^h)\right]}{\|\textbf v^h\|_{\mathbb V^h_0}}\\
		&\quad\frac{+\mathbb E\left[c(\bm{\eta},\bm{\xi}, \textbf{v}^h)\right]
			-\mathbb E\left[c(\bm{\eta}^h,\bm{\xi}^h,\textbf{v}^h)\right]+\mathbb E\left[c(\bm{\xi}, \bm{\eta},\textbf{v}^h)\right] -\mathbb E\left[c(\bm{\xi}^h, \bm{\eta}^h,\textbf{v}^h)\right]}{\|\textbf v^h\|_{\mathbb V^h_0}}\\
		&=\frac{1}{\beta_1^h}\sup_{0\neq\textbf v^h\in \mathbb V^h_0}\frac{\mathbb E[a(\bm{\eta}-\bm{\eta}^h,\textbf v^h)]-\mathbb E[b(\textbf{v}^h, p_2-q^h)]+\mathbb E[c(\bm{\eta}, \bm{\eta}-\bm{\eta}^h,\textbf v^h)]}{\|\textbf v^h\|_{\mathbb V^h_0}}\\
		&\quad\frac{+\mathbb E[c(\bm{\eta}-\bm{\eta}^h, \bm{\eta}^h,\textbf v^h)]+\mathbb E[c(\bm{\eta}, \bm{\xi}-\bm{\xi}^h,\textbf v^h)]+\mathbb E[c(\bm{\eta}-\bm{\eta}^h, \bm{\xi}^h,\textbf v^h)]+\mathbb E[c(\bm{\xi}, \bm{\eta}-\bm{\eta}^h,\textbf v^h)]}{\|\textbf v^h\|_{\mathbb V^h_0}}\\
		&\quad+\frac{\mathbb E[c(\bm{\xi}-\bm{\xi}^h, \bm{\eta}^h,\textbf v^h)]}{\|\textbf v^h\|_{\mathbb V^h_0}}.
	\end{align*}
	Now,we use the Cauchy-Schwarz inequality
	to yield that
	\begin{align*}
		&\quad	\| p_2^h-q^h\|_{\mathbb W}\\
		&\le  \frac{1}{\beta_1^h}\left(\nu\|\bm{\eta}-\bm{\eta}^h\|_{\mathbb V^h_0}+C\|\bm{\eta}\|_{\mathbb V^h_0}\|\bm{\eta}-\bm{\eta}^h\|_{\mathbb V^h_0}+C\|\bm{\eta}^h\|_{\mathbb V^h_0}\|\bm{\eta}-\bm{\eta}^h\|_{\mathbb V^h_0}+C\|\bm{\eta}\|_{\mathbb V^h_0}\|\bm{\xi}-\bm{\xi}^h\|_{\mathbb V^h_0}\right.\\
		&\left.+C\|\bm{\xi}^h\|_{\mathbb V^h_0}\|\bm{\eta}-\bm{\eta}^h\|_{\mathbb V^h_0}+C\|\bm{\xi}\|_{\mathbb V^h_0}\|\bm{\eta}-\bm{\eta}^h\|_{\mathbb V^h_0}+C\|\bm{\eta}^h\|_{\mathbb V^h_0}\|\bm{\xi}-\bm{\xi}^h\|_{\mathbb V^h_0}+\|p_2-q^h\|_{\mathbb W} \right)   \\
		&=\frac{C}{\beta_1^h}\left(N_{\eta}\left\|\bm{\xi}-\bm{\xi}^h\right\|_{\mathbb V}+\left(\nu+\frac{\|\textbf{F}\|
			_{H^{-1} (\Omega)}}{\nu}+N_{\eta}\right)\left\|\bm{\eta}-\bm{\eta}^h\right\|_{\mathbb V}+\left\|p_2-q^h \right\|_{\mathbb W}\right).
	\end{align*}
	Plugging this into \eqref{p2}, we conclude that
	\begin{align}
		&\quad	\| p_2-p_2^h\|_{\mathbb W}\nonumber \\
		&\le \frac{C}{\beta_1^h} N_{\eta}\left\|\bm{\xi}-\bm{\xi}^h\right\|_{\mathbb V}+\frac{C}{\beta_1^h}\nu\left(\nu+\frac{\|\textbf{F}\|
			_{H^{-1} (\Omega)}}{\nu}+N_{\eta}\right)\left\|\bm{\eta}-\bm{\eta}^h\right\|_{\mathbb V}\nonumber\\
		&\quad+\left(1+\frac{C}{\beta_1^h}\right)\left\|p_2-q^h \right\|_{\mathbb W}\label{p2_error}.
	\end{align}
	Summing up, we finish the proof of \textbf{Theorem \ref{thm4_4}}.\end{proof}
\subsection{Statistical error of the modified splitting format}
Accordingly, the Galerkin finite element discrete formulation of the modified splitting method  is as follows:  Find $\tilde{\bm{\eta}}^h\in \mathbb V^h,\bm{\xi}^h\in \textbf{V}^h , \tilde{p}_2^h\in\mathbb W^h$ such that
\begin{equation}\label{mD_sSNS2}
	\begin{split}
		\mathbb E[a(\tilde{\bm{\eta}}^h,\textbf{w}^h)]  +\mathbb E[c(\tilde{\bm{\eta}}^h,\bm{\xi}^h, \textbf w^h)]+\mathbb E[c(\bm{\xi}^h, \tilde{\bm{\eta}}^h,\textbf{w}^h)]+\mathbb E[b(\textbf w^h, \tilde{p}_2^h)] &= \mathbb E\left[\left(\sigma\frac{d\textbf W}{d\textbf x}, \textbf{w}^h\right)\right],\\
		\mathbb E[b(\tilde{\bm{\eta}}^h, q^h)]&= 0,
	\end{split}
\end{equation}
for all test function $(\textbf{w}^h,q^h)\in\mathbb V^h\times \mathbb Q^h$.
\begin{theorem}[Error Estimation for the Modified Splitting Method] \label{thm4_5}
	Let $\Omega \in \mathbb{R}^2$ be a bounded domain with polyhedral and Lipschitz continuous boundary, let \eqref{Th1} be fulfilled, and let instead of \eqref{ex_solution} the stronger condition  
	\begin{equation}\label{modified condition}
		\frac{\|\textbf{F}\|
			_{H^{-1} (\Omega)}}{\nu^2}\le \frac{5}{8}.    
	\end{equation}
	Meanwhile, $\bm{\xi}^h$, $\tilde{p}_2^h$, $\tilde{\bm{\eta}}^h$ be the unique solution of equation \eqref{D_sSNS1} and \eqref{mD_sSNS2}. Assume that this problem is discretized with inf-sup stable finite element spaces. Then, the following error estimate holds
	\begin{flalign}
		\left\|\tilde{\bm{\eta}}-\tilde{\bm{\eta}}^h\right\|_{\mathbb V} &\le C\left\{\inf\limits_{\textbf{w}^h \in\textbf{V}^h_{div}}\left\|\tilde{\bm{\eta}}-\textbf{w}^h\right\|_{\mathbb V}+\frac{N_{\tilde{\bm{\eta}}}N_{\bm{\xi}-\bm{\xi}^h}}{\nu}  \right\},\\
		\| \tilde{p}_2-\tilde{p}_2^h\|_{\mathbb W} &\le \frac{C}{\beta_1^h}N_{\bm{\eta}}\left\|\bm{\xi}-\bm{\xi}^h\right\|_{\mathbb V}+\frac{C}{\beta_1^h}\left(\nu+\frac{\|\textbf{F}\|
			_{H^{-1} (\Omega)}}{\nu}\right)\left\|\bm{\eta}-\bm{\eta}^h\right\|_{\mathbb V} +\left(1+\frac{C}{\beta_1^h}\right)\left\|\tilde{p}_2-q^h \right\|_{\mathbb W},
	\end{flalign}
	where the constant $N_{\bm{\xi}-\bm{\xi}^h}=C\left\{\left(1+\frac{\|\textbf{F}\|
		_{H^{-1} (\Omega)}}{\nu^2}\right)\inf\limits_{\textbf{w}^h \in\textbf{V}^h_{div}}\left\|\bm{\xi}-\textbf{w}^h\right\|_{\mathbb V}+\frac{1}{\nu}\inf\limits_{q^h \in\textbf{Q}^h}\left\|p-q^h \right\|\right\}$ and the constant C does not depend on the mesh size.
\end{theorem}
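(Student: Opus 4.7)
The plan is to mirror the proof of \textbf{Theorem \ref{thm4_4}}, exploiting the fact that the modified stochastic equation \eqref{mD_sSNS2} is linear in $\tilde{\bm{\eta}}^h$, so the $c(\bm{\eta},\bm{\eta},\cdot)$-type contributions handled in \eqref{e_12}-\eqref{e_13} are simply absent. First I would subtract \eqref{mD_sSNS2} from the continuous weak form \eqref{iw_sSNS2} tested against $\textbf{w}^h\in\textbf{V}^h_{div}$ to obtain the error equation
\begin{equation*}
\mathbb E[a(\tilde{\bm{\eta}}-\tilde{\bm{\eta}}^h,\textbf{w}^h)] + \mathbb E[c(\tilde{\bm{\eta}},\bm{\xi},\textbf{w}^h)] - \mathbb E[c(\tilde{\bm{\eta}}^h,\bm{\xi}^h,\textbf{w}^h)] + \mathbb E[c(\bm{\xi},\tilde{\bm{\eta}},\textbf{w}^h)] - \mathbb E[c(\bm{\xi}^h,\tilde{\bm{\eta}}^h,\textbf{w}^h)] - \mathbb E[b(\textbf{w}^h,\tilde{p}_2-q^h)] = 0.
\end{equation*}

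Next I would decompose $\tilde{\bm{\eta}}-\tilde{\bm{\eta}}^h = (\tilde{\bm{\eta}}-I^h\tilde{\bm{\eta}}) - (\tilde{\bm{\eta}}^h-I^h\tilde{\bm{\eta}}) =: \tilde{\bm{\psi}}_3 - \tilde{\bm{\psi}}_4^h$ with $I^h\tilde{\bm{\eta}}\in\textbf{V}^h_{div}$, choose $\textbf{w}^h = \tilde{\bm{\psi}}_4^h$, and split each cross-trilinear difference by inserting $\bm{\xi}^h$ or $I^h\tilde{\bm{\eta}}$, for instance
\begin{equation*}
\mathbb E[c(\tilde{\bm{\eta}},\bm{\xi},\tilde{\bm{\psi}}_4^h)] - \mathbb E[c(\tilde{\bm{\eta}}^h,\bm{\xi}^h,\tilde{\bm{\psi}}_4^h)] = \mathbb E[c(\tilde{\bm{\eta}},\bm{\xi}-\bm{\xi}^h,\tilde{\bm{\psi}}_4^h)] + \mathbb E[c(\tilde{\bm{\psi}}_3,\bm{\xi}^h,\tilde{\bm{\psi}}_4^h)] - \mathbb E[c(\tilde{\bm{\psi}}_4^h,\bm{\xi}^h,\tilde{\bm{\psi}}_4^h)],
\end{equation*}
with an analogous split for the $c(\bm{\xi},\tilde{\bm{\eta}},\cdot) - c(\bm{\xi}^h,\tilde{\bm{\eta}}^h,\cdot)$ difference, in which the interior $c(\bm{\xi},\tilde{\bm{\psi}}_4^h,\tilde{\bm{\psi}}_4^h)$ vanishes by \eqref{c_4}. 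Each resulting piece is controlled by continuity \eqref{c_5}, Young's inequality with absorption $\nu/8$, and the stability estimates of \textbf{Lemma \ref{lem5}} (giving $\|\bm{\xi}^h\|_{\mathbb V}\lesssim \|\textbf{F}\|_{H^{-1}(\Omega)}/\nu$ and $\|\tilde{\bm{\eta}}^h\|_{\mathbb V}\le \hat N_{\tilde{\bm{\eta}}}$). The troublesome term $\mathbb E[c(\tilde{\bm{\psi}}_4^h,\bm{\xi}^h,\tilde{\bm{\psi}}_4^h)]$ produces a prefactor of order $\|\textbf{F}\|_{H^{-1}(\Omega)}/\nu^2$ against $\nu\|\tilde{\bm{\psi}}_4^h\|_{\mathbb V}^2$; the hypothesis \eqref{modified condition} is precisely what is needed so that this, together with the Young-absorption constants, can be subsumed into the left-hand side. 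A triangle inequality and the infima over $\textbf{w}^h$ and $q^h$ then produce the velocity bound, where the factor $N_{\bm{\xi}-\bm{\xi}^h}$ enters through $\mathbb E[c(\tilde{\bm{\eta}},\bm{\xi}-\bm{\xi}^h,\tilde{\bm{\psi}}_4^h)]$.

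For the pressure estimate I would apply the discrete inf-sup condition \eqref{D_LBB} to $\tilde{p}_2^h - q^h$, then use the error equation to rewrite $\mathbb E[b(\textbf{w}^h,\tilde{p}_2^h-q^h)]$ as the diffusion error plus the two cross-trilinear differences plus $\mathbb E[b(\textbf{w}^h,\tilde{p}_2-q^h)]$. By Cauchy-Schwarz, \eqref{c_5} and \textbf{Lemma \ref{lem5}}, the trilinears are bounded by $C\bigl(N_{\bm{\eta}}\|\bm{\xi}-\bm{\xi}^h\|_{\mathbb V} + (\nu + \|\textbf{F}\|_{H^{-1}(\Omega)}/\nu)\|\tilde{\bm{\eta}}-\tilde{\bm{\eta}}^h\|_{\mathbb V}\bigr)\|\textbf{w}^h\|_{\mathbb V^h_0}$, and a triangle inequality $\|\tilde{p}_2-\tilde{p}_2^h\|_{\mathbb W}\le \|\tilde{p}_2-q^h\|_{\mathbb W}+\|\tilde{p}_2^h-q^h\|_{\mathbb W}$ closes the argument, in complete analogy with \eqref{p2}-\eqref{p2_error}. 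The main technical obstacle is the absorption bookkeeping: one must verify that \eqref{modified condition} (which is weaker than \eqref{stronger condition}, since we lost the $c(\bm{\eta},\bm{\eta},\cdot)$ contributions) is still strong enough to keep the coefficient of $\|\tilde{\bm{\psi}}_4^h\|_{\mathbb V}^2$ on the right-hand side strictly below $\nu$, and that no additional terms beyond $\tilde{\bm{\psi}}_3$ and $\bm{\xi}-\bm{\xi}^h$ leak into the bound. Once that accounting is done, every estimate reduces to a calculation already carried out in \textbf{Theorem \ref{thm4_4}}.
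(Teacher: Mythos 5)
Your proposal follows essentially the same route as the paper: the same $\bm{\psi}_3-\bm{\psi}_4^h$ decomposition with $I^h\tilde{\bm{\eta}}\in\textbf{V}^h_{div}$, the same splitting of the two cross-trilinear differences (with the $c(\bm{\xi},\tilde{\bm{\psi}}_4^h,\tilde{\bm{\psi}}_4^h)$ term vanishing by \eqref{c_4} and the $c(\tilde{\bm{\psi}}_4^h,\bm{\xi}^h,\tilde{\bm{\psi}}_4^h)$ term absorbed via \eqref{modified condition}), and the same inf-sup/Cauchy--Schwarz/triangle-inequality argument for the pressure; the paper simply recycles the estimates \eqref{e1}--\eqref{e4} from \textbf{Theorem \ref{thm4_4}} rather than rewriting them. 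Your accounting of where $N_{\bm{\xi}-\bm{\xi}^h}$ enters and why the weaker condition suffices matches the paper's reasoning.
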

\begin{proof} To establish the error estimate, we use test function $\textbf{w}^h \in \textbf{V}^h_{div}$ and subtract the discrete formulation  from
	the weak formulation of the steady-state NS equation
	\begin{gather}
		a(\bm{\xi}-\bm{\xi}^h, \textbf{w}^h)+c(\bm{\xi},\bm{\xi}, \textbf{w}^h) -c(\bm{\xi}^h,\bm{\xi}^h, \textbf{w}^h)= 0, a.s.\label{ie_1} \\
		a(\tilde{\bm{\eta}}-\tilde{\bm{\eta}}^h,\textbf{w}^h)+c(\tilde{\bm{\eta}},\bm{\xi}, \textbf{w}^h)-c(\tilde{\bm{\eta}}^h,\bm{\xi}^h,\textbf{w}^h)+c(\bm{\xi}, \tilde{\bm{\eta}},\textbf{w}^h) -c(\bm{\xi}^h, \tilde{\bm{\eta}}^h,\textbf{w}^h) = 0, a.s.\label{ie_2}
	\end{gather}
	For the error equation \eqref{ie_2}, we also decompose the error into the approximate error and the discrete remainder
	\begin{equation}\label{ie_9}
		\tilde{\bm{\eta}}-\tilde{\bm{\eta}}^h = \left(\tilde{\bm{\eta}}-I^h\tilde{\bm{\eta}}\right)- \left(\tilde{\bm{\eta}}^h-I^h\tilde{\bm{\eta}}\right)=\bm{\psi}_3-\bm{\psi}_4^h,    \qquad I^h\tilde{\bm{\eta}}\in V^h_{div}.
	\end{equation}
	Inserting this decomposition in the error equation and setting  $\textbf{w}^h=\bm{\psi}_4^h$ leads to
	\begin{equation}\label{ie_10}
		\begin{split}
			\nu\left\|\bm{\psi}_4^h \right\|^2_{\mathbb V}= &\nu\mathbb E\left[\left(\nabla\bm{\psi}_3,\nabla\bm{\psi}_4^h\right)\right]+\mathbb E\left[c(\tilde{\bm{\eta}},\bm{\xi}, \bm{\psi}_4^h)\right]-\mathbb E\left[c(\tilde{\bm{\eta}}^h,\bm{\xi}^h,\bm{\psi}_4^h)\right]\\
			&+\mathbb E\left[c(\bm{\xi}, \tilde{\bm{\eta}},\bm{\psi}_4^h)\right] -\mathbb E\left[c(\bm{\xi}^h, \tilde{\bm{\eta}}^h,\bm{\psi}_4^h)\right].
		\end{split}
	\end{equation}
	
	According to equation \eqref{e_11} and \eqref{e1}-\eqref{e4}, substitute the above error estimates into equation \eqref{ie_10}, we get
	\begin{equation} 
		\begin{split}
			\left\|\bm{\psi}_4^h \right\|^2_{\mathbb V} &\leq C\left\{\left(1+\frac{\|\textbf{F}\|^2
				_{H^{-1} (\Omega)}}{\nu^4} \right)\left\|\bm{\psi}_3 \right\|^2_{\mathbb V}+\frac{N^2_{\eta}N^2_{\bm{\xi}-\bm{\xi}^h}}{\nu^2}  \right\}\\
			&\leq C\left\{\left\|\bm{\psi}_3 \right\|^2_{\mathbb V}+\frac{N^2_{\eta}N^2_{\bm{\xi}-\bm{\xi}^h}}{\nu^2}  \right\}.
			\nonumber
		\end{split}
	\end{equation}
	Finally, the error estimation of equation \eqref{ie_9} is drawn by using the triangle inequality
	\begin{equation} 
		\begin{split}
			\left\|\tilde{\bm{\eta}}-\tilde{\bm{\eta}}^h\right\|_{\mathbb V}&\leq \left\|\bm{\psi}_3\right\|_{\mathbb V}+\left\|\bm{\psi}_4^h\right\|_{\mathbb V}\\
			&\le C\left\{\left\|\bm{\psi}_3 \right\|_{\mathbb V}+\frac{N_{\eta}N_{\bm{\xi}-\bm{\xi}^h}}{\nu}  \right\}\\
			&\le C\left\{\inf\limits_{\textbf{w}^h \in\textbf{V}^h_{div}}\left\|\tilde{\bm{\eta}}-\textbf{w}^h\right\|_{\mathbb V}+\frac{N_{\eta}N_{\bm{\xi}-\bm{\xi}^h}}{\nu}  \right\}.\nonumber
		\end{split}
	\end{equation}
	On the other hand, we consider the following  finite element error estimate for the pressure.
	Observe that by the triangle inequality, one has
	\begin{equation}
		\|\tilde{p}_2-\tilde{p}_2^h\|_{\mathbb W}\le \|\tilde{p}_2-q^h\|_{\mathbb W} +\|\tilde{p}_2^h-q^h\|_{\mathbb W}.\label{mp2}
	\end{equation}
	Next, we handle the term $\|\tilde{p}_2^h-q^h\|_{\mathbb W}$. By the discrete inf-sup condition \eqref{D_LBB} and the insertion of the finite element problem \eqref{D_sSNS2} as well as the variational form of the steady-state Navier-Stokes equations \eqref{w_sSNS2}, we have
	\begin{align}
		&\quad\| \tilde{p}_2^h-q^h\|_{\mathbb W}\nonumber\\
		&\le  \frac{1}{\beta_1^h}\sup_{0\neq\textbf w^h\in \mathbb V^h_0}\frac{\mathbb E[b(\textbf{w}^h, \tilde{p}_2^h-q^h)]}{\|\textbf w^h\|_{\mathbb V^h_0}} \nonumber\\
		&=\frac{1}{\beta_1^h}\sup_{0\neq\textbf w^h\in \mathbb V^h_0}\frac{\mathbb E[a(\tilde{\bm{\eta}}-\tilde{\bm{\eta}}^h,\textbf w^h)]-\mathbb E\left[b(\textbf{w}^h, \tilde{p}_2-q^h)\right]+\mathbb E\left[c(\tilde{\bm{\eta}},\bm{\xi}, \textbf{w}^h)\right]}{\|\textbf w^h\|_{\mathbb V^h_0}}\nonumber\\
		&\quad\frac{-\mathbb E\left[c(\tilde{\bm{\eta}}^h,\bm{\xi}^h,\textbf{w}^h)\right]+
			\mathbb E\left[c(\bm{\xi}, \tilde{\bm{\eta}},\textbf{w}^h)\right] -\mathbb E\left[c(\bm{\xi}^h, \tilde{\bm{\eta}}^h,\textbf{w}^h)\right]}{\|\textbf w^h\|_{\mathbb V^h_0}}\nonumber\\ 
		&=\frac{1}{\beta_1^h}\sup_{0\neq\textbf w^h\in \mathbb V^h_0}\frac{\mathbb E[a(\tilde{\bm{\eta}}-\tilde{\bm{\eta}}^h,\textbf w^h)]-\mathbb E[b(\textbf{w}^h, \tilde{p}_2-q^h)]+\mathbb E[c(\tilde{\bm{\eta}}, \bm{\xi}-\bm{\xi}^h,\textbf w^h)]}{\|\textbf w^h\|_{\mathbb V^h_0}}\nonumber\\
		&\quad\frac{+\mathbb E[c(\tilde{\bm{\eta}}-\tilde{\bm{\eta}}^h, \bm{\xi}^h,\textbf w^h)]+\mathbb E[c(\bm{\xi}, \tilde{\bm{\eta}}-\tilde{\bm{\eta}}^h,\textbf w^h)]+\mathbb E[c(\bm{\xi}-\bm{\xi}^h, \tilde{\bm{\eta}}^h,\textbf w^h)]}{\|\textbf w^h\|_{\mathbb V^h_0}}\label{mp2^h1}.
	\end{align}
	Now we use the Cauchy-Schwarz inequality to yield that
	\begin{align}
		&\quad	\| \tilde{p}_2^h-q^h\|_{\mathbb W}\nonumber\\
		&\le  \frac{1}{\beta_1^h}\left(\nu\|\tilde{\bm{\eta}}-\tilde{\bm{\eta}}^h\|_{\mathbb V^h_0}+C\|\tilde{\bm{\eta}}\|_{\mathbb V^h_0}\|\bm{\xi}-\bm{\xi}^h\|_{\mathbb V^h_0}
		+C\|\bm{\xi}^h\|_{\mathbb V^h_0}\|\tilde{\bm{\eta}}-\bm{\eta}^h\|_{\mathbb V^h_0}+C\|\bm{\xi}\|_{\mathbb V^h_0}\|\tilde{\bm{\eta}}-\tilde{\bm{\eta}}^h\|_{\mathbb V^h_0}\right. \nonumber\\	&\quad\left.+C\|\tilde{\bm{\eta}}^h\|_{\mathbb V^h_0}\|\bm{\xi}-\bm{\xi}^h\|_{\mathbb V^h_0}+\|\tilde{p}_2-q^h\|_{\mathbb W}\right)  \nonumber \\
		&=\frac{C}{\beta_1^h}\left(N_{\bm{\eta}}\left\|\bm{\xi}-\bm{\xi}^h\right\|_{\mathbb V}+\left(\nu+\frac{\|\textbf{F}\|
			_{H^{-1} (\Omega)}}{\nu}\right)\left\|\bm{\eta}-\bm{\eta}^h\right\|_{\mathbb V}+\left\|\tilde{p}_2-q^h \right\|_{\mathbb W}\right)\label{mp2_h}.
	\end{align}
	Plugging this into \eqref{mp2}, we conclude that
	\begin{align}
		&\quad\| \tilde{p}_2-\tilde{p}_2^h\|_{\mathbb W}\nonumber \\
		&\le \frac{C}{\beta_1^h}N_{\bm{\eta}}\left\|\bm{\xi}-\bm{\xi}^h\right\|_{\mathbb V}+\frac{C}{\beta_1^h}\left(\nu+\frac{\|\textbf{F}\|
			_{H^{-1} (\Omega)}}{\nu}\right)\left\|\bm{\eta}-\bm{\eta}^h\right\|_{\mathbb V}+\left(1+\frac{C}{\beta_1^h}\right)\left\|\tilde{p}_2-q^h \right\|_{\mathbb W}\label{mp2_error}.
	\end{align}
	Summing up, we finish the proof of \textbf{Theorem \ref{thm4_5}}.
\end{proof}

Based on the conclusion of \textbf{Remark \ref{remark3_2}}, the solution of the deterministic equation in the splitting method for large-scale computations is very close to the expected value $\mathbb E[\textbf{u}]$, so it does not need to be solved repeatedly. Additionally, according to the error conditions of \textbf{Theorem \ref{thm4_4}} and \textbf{Theorem \ref{thm4_5}}, we  conclude that the nonlinear terms in Equation \eqref{D_sSNS2} can be neglected when both Condition \eqref{stronger condition} and Condition \eqref{modified condition} are satisfied.

In large-scale computations, solving nonlinear equations often requires iterative methods, which can be time-consuming and computationally expensive. However, by simplifying the equation to a linear one, we can use faster and more direct methods for solving,  such as matrix operation or analytical solution method, which will significantly speed up the computational efficiency while ensuring accuracy.
\section{Numerical experiments}\label{sec5}
In this section, we conduct numerical tests to validate our theoretical error estimates and evaluate the performance of the proposed splitting method and the modified splitting method. For this purpose, we consider the  steady-state SNS  equation as the underlying problem. The model is constructed based on reference data obtained through the application of the finite element method. All numerical experiments are executed using MATLAB 2021a software. These rigorous tests are undertaken to assess the accuracy and effectiveness of the proposed splitting method, replicating realistic scenarios and mitigating potential sources of bias.

Consider a two-dimensional  steady-state incompressible  SNS equation with Dirichlet boundary conditions
\begin{eqnarray}
	\begin{cases}
		-\nu\Delta \textbf{u}+(\textbf{u}\cdot\nabla) \textbf{u}+\nabla p=F+\sigma\frac{d\textbf W}{d\textbf x} \quad \text{ in } \Omega, \nonumber\\[1ex]
		\nabla\cdot\textbf{u}=0\quad \text{in }\Omega ,\\
		\textbf{u}|_{\partial\Omega}=\textbf{0},
	\end{cases}
\end{eqnarray}
where $\textbf{u}=(u_1,u_2)^{T}$. Specifically, we assume that the domain $\Omega$ is a rectangular region defined as: $\Omega=[0, 1]\times[0, 1]$, and the initial mesh for the triangular finite element is 312. We use the Taylor-Hood finite element method for the computation, and the finite element triangulation of domain $\Omega$ is depicted in  \textbf{FIG.1}.
\begin{figure}[h]
	\centering
	\vspace{-0.3cm}
	\centering
	\includegraphics[height=2.5cm,width=6cm]{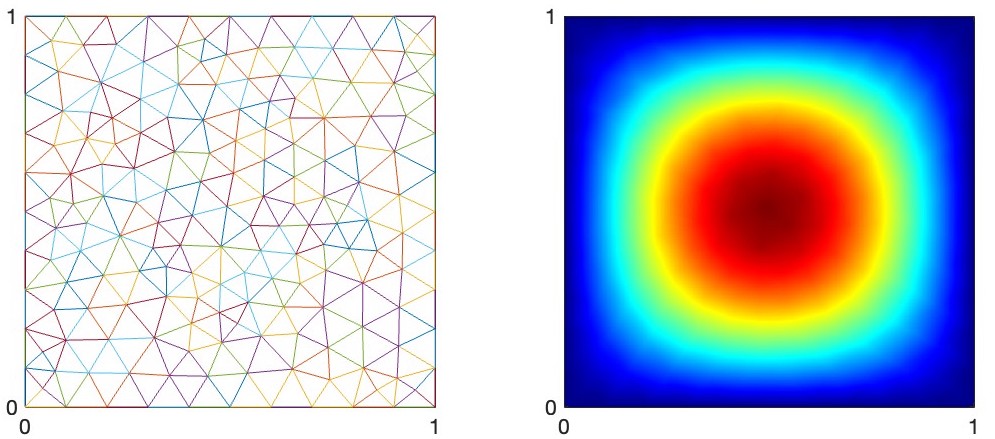}
	\caption{Finite element mesh for domain  $\Omega$}\label{fig2}
\end{figure}

In our numerical test, we assume that when $\sigma=0$, the true solution $\textbf{u}$ and $p$ are as follows
\begin{equation} 
	\begin{split}
		u_1&=64x^2(1-x)^2(4-8y)y(1-y),\\
		u_2&=-64(4-8x)x(1-x)y^2(1-y)^2,\\
		p&=\sin(\pi x)\sin(\pi y).
	\end{split}
\end{equation}
Subsequently, the external  force $\textbf{F}$  is adjusted to align with the true solution $\textbf{u}$, i.e.,
\begin{align}
	F_1=-\nu\Delta u_1+u_1\frac{\partial u_1}{\partial x}+u_2\frac{\partial u_1}{\partial y}+\frac{\partial p}{\partial x}, \\
	F_2=-\nu\Delta u_2+u_1\frac{\partial u_2}{\partial x}+u_2\frac{\partial u_2}{\partial y}+\frac{\partial p}{\partial y}.
\end{align}
Disparities are apparent between the true solution and the finite element sample solution for any randomly chosen input. To address this, we propose the utilization of the Monte Carlo method to estimate the mean of the finite element sample solutions.   Let  $\textbf u_{h,N}$ denotes the finite element sample solution  generated by the random input $\sigma\frac{d\textbf W}{d\textbf x}$  and $\textbf u_{sh,N}$, $\textbf u_{mh,N}$ represent the finite element sample solutions  obtained from the splitting model and the modified splitting model respectively. Naturally, the MC average for s-th order solution samples is defined as
\begin{flalign}
	\vspace{-0.8cm}
	\mathbb E^s_{h,N}[\textbf u]&=\frac{1}{N}\sum\limits_{k=1}^M\textbf u^s_{h,k},\\
	\mathbb E^s_{sh,M}[\textbf u]&=\frac{1}{M}\sum\limits_{k=1}^M\textbf u^s_{sh,k},\\
	\mathbb E^s_{mh,M}[\textbf u]&=\frac{1}{M}\sum\limits_{k=1}^M\textbf u^s_{mh,k}. 
\end{flalign}
We now give the definition of the associated $L^2(\Omega)$-norm error measurement, as shown in \eqref{sh_error}-\eqref{mh_error}
\begin{flalign}
	\bm{\epsilon}_{sh}&=\left\|
	\mathbb E^s_{sh,M}[\textbf u]-\mathbb E^s_{h,M}[\textbf u]\right\|_{L^2(\Omega)}, \label{sh_error} \\
	\bm{\epsilon}_{mh}&=\left\|\mathbb E^s_{mh,M}[\textbf u]-\mathbb E^s_{h,M}[\textbf u]\right\|_{L^2(\Omega)},\label{mh_error} 
\end{flalign}
where the sample size for the Monte-Carlo average
$\mathbb E^s_{h,M}$ is M.

Additionally, we also give the following estimates of relative error statistics
\begin{flalign}\label{re_error}
	\bm{\epsilon}^r_{sh}&=\frac{\left\|
		\mathbb E^s_{sh,M}[\textbf u]-\mathbb E^s_{h,M}[\textbf u]\right\|_{L^2(\Omega)}}{\left\|\mathbb E^s_{h,M}[\textbf u]\right\|_{L^2(\Omega)}}, \\
	\bm{\epsilon}^r_{mh}&=\frac{\left\|
		\mathbb E^s_{mh,M}[\textbf u]-\mathbb E^s_{h,M}[\textbf u]\right\|_{L^2(\Omega)}}{\left\|\mathbb E^s_{h,M}[\textbf u]\right\|_{L^2(\Omega)}}.
\end{flalign}
These error statistics provide quantitative measures of the accuracy and precision of the splitting models's approximation compared to the finite element solution. By computing these error statistics, we  assess the validity of the splitting models and their performance in approximating the finite element solution. 

Specially, FIG.2  provides valuable statistical information on the behavior and properties of the coupled splitting solution and coupled modified splitting solution, respectively, when using M = 100 sample solutions in Monte Carlo simulations. To simplify the test, we represent noise $\sigma\frac{d\textbf W}{d\textbf x}$ by a random function that satisfies the piecewise constant approximation formula \eqref{noise}. In addition, we take $\nu=0.02$, $\sigma=1.5$, and $s=1$. From left to right, the figure displays the velocity fields and the correlation scale temperature field of the deterministic equation (column 1), the stochastic equation (column 2), the modified stochastic equation (column 3) , the splitting solution (column 4) and the modified splitting solution (column 5) respectively, and compare the solution of the splitting method with the finite element sample solution (column 6). It should be noted that the error between $\mathbb E_{h,100}[\textbf u]$ and $\mathbb E_{sh,100}[\textbf u]$ is very small, only $9.4604\times10^{-16}$, which numerically validates the efficacy of the splitting method. Due to the neglect of the non-linear term of the stochastic equation in the splitting method, the error between $\mathbb E_{mh,100}[\textbf u]$ and $\mathbb E_{h,100}[\textbf u]$ is greater than that between $\mathbb E_{sh,100}[\textbf u]$ and $\mathbb E_{h,100}[\textbf u]$, approximately $5.5304\times10^{-4}$. Furthermore, it can be observed that the shape of the solution to the deterministic equation ($\bm{\xi}_h$, column 1) is closely similar to that of the original equation in terms of the expected value. This observation numerically verifies \textbf{Remark \ref{remark3_2}}.
\begin{figure}[h!]
	\centering
	\includegraphics[height=5cm,width=13cm]{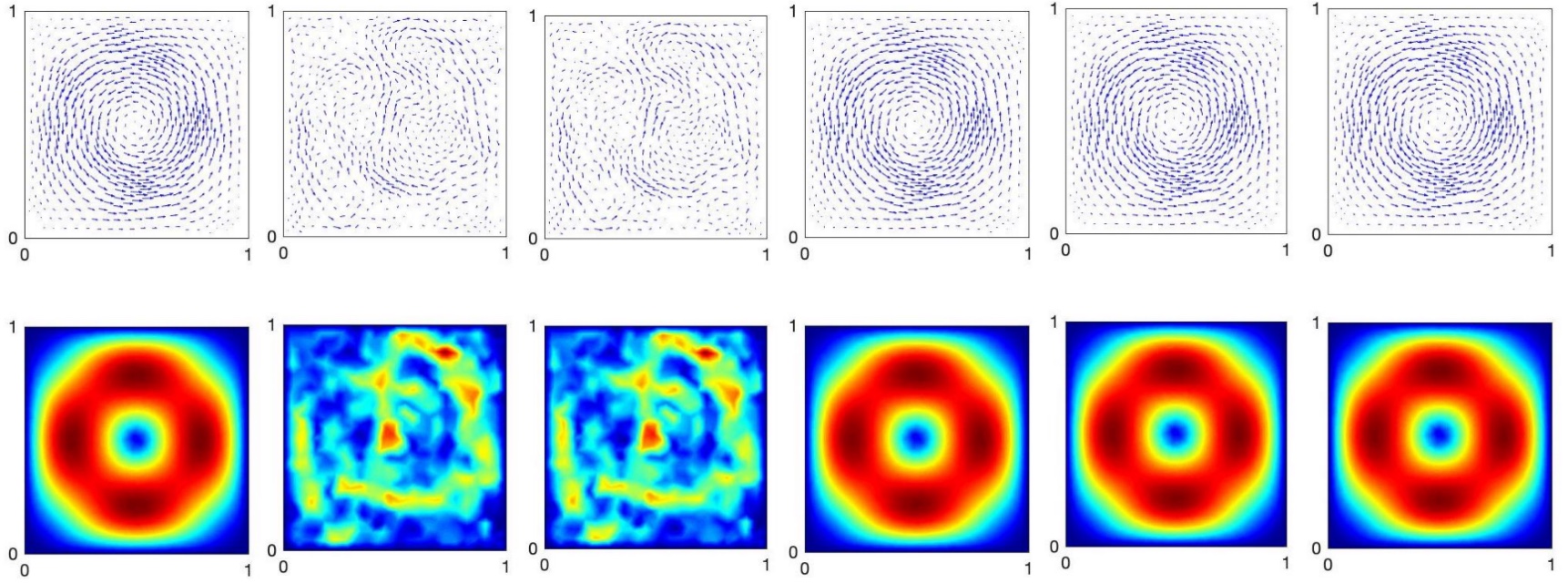}
	\caption{Quiver plot corresponding to  velocity field $\bm{\xi}_h,$ $\mathbb E[\bm \eta_h], \mathbb E[\bm{\hat{\eta}}_h]$, $\mathbb E_{sh,100}[\textbf u], \mathbb E_{mh,100}[\textbf u], \mathbb E_{h,100}[\textbf u]$ (line 1) and the associated  scaled temperature field $|\bm{\xi}_h|,\left|\mathbb E[\bm \eta_h]\right|, \left|\mathbb E[\bm {\hat{\eta}}_h]\right|,$ $\left|\mathbb E_{sh,100}[\textbf u]\right|$, $\left|\mathbb E_{mh,100}[\textbf u]\right|$,$\left|\mathbb E_{h,100}[\textbf u]\right|$(line 2) .}
\end{figure}

To demonstrate the effectiveness of the splitting method under different disturbance conditions, we selected four perturbation rates $\kappa$ = 0.2019, 0.5139, 0.8547, and 1.7118 for numerical experiments, as illustrated in Fig. 3. In particular, the perturbation rate represents the ratio between the given body force $F$ and the additive spatial random noise $\sigma\frac{d\textbf W}{d\textbf x}$, that is,

\begin{equation*}
	\kappa=\frac{\left\|\sigma\frac{d\textbf W}{d\textbf x}\right\|}{\|F\|}.
\end{equation*}
Figure 3 depicts the morphological graphs of the individual solution. In particular, we do not alter the external force term \(  \mathbf{F} \). Consequently, the solution of the deterministic equation within the splitting method remains unchanged and its shape aligns with that shown in the first column of FIG.2. In FIG.3, it is evident that as the perturbation rate increases, the influence of random noise becomes more pronounced, leading to a more irregular shape of the solution. However, the morphology of the solutions obtained from the splitting method and the modified splitting method remains very close to the solution of the original equation (as shown in columns 3-5 in Fig.3).
\begin{figure}[h!]
	\centering
	\centering
	\includegraphics[height=10cm,width=13cm]{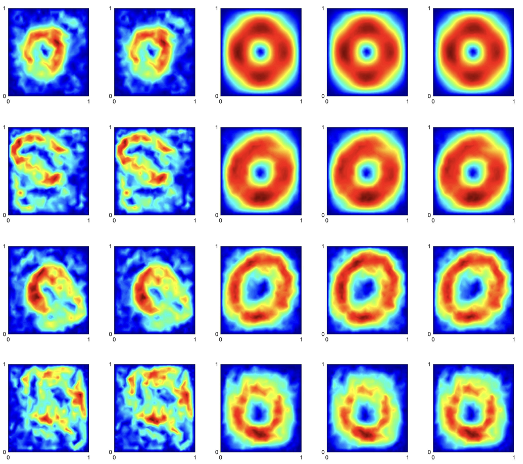}\caption{From left to right, there are  stochastic solution $\bm{\eta}_h$, modified stochastic solution $\hat{\bm{\eta}}_h$, finite element approximate velocity field based on splitting method $\textbf{u}_{sh}$, finite element approximate velocity field based on modified splitting method $\textbf{u}_{mh}$ and  finite element approximate velocity field $\textbf{u}_{h}$. From top to bottom, the scaled temperature fields correspond to $\kappa$=0.2019,  0.5139,  0.8547  and  1.7118 .}
	\vspace{-0.4cm}
\end{figure}

In Table 1, we vary the number of samples ($M$) and calculate the absolute and relative errors between the average solution obtained through the splitting method, modified splitting method and the average solution obtained through finite element sampling. Here, the absolute error represents the difference between the two average solutions, while the relative error is the ratio between the absolute error and the magnitude of the reference solution. As shown in Table 1, both absolute and relative errors decrease as the sample size increases. At the same time, when the number of samples does not need to be very large, the order of magnitude of error tends to be stable, that is, the splitting solution is very close to the finite element solution.The results demonstrate the effectiveness of the splitting method in achieving accurate solutions while reducing computational costs.
\begin{table}[h]
	\vspace{-0.3cm}
	\centering
	\caption{The error measurements $\bm{\epsilon}_{sh}$,$\bm{\epsilon}_{mh}$,$\bm{\epsilon}^r_{sh}$ and $\bm{\epsilon}^r_{mh}$ as $M=50,100,150,200,300,500$, where $\nu=0.02$ and $\kappa=0.51$.}
	\vspace{0.3cm}
	\begin{tabular*}{\textwidth}{@{\extracolsep{\fill}}llccccc}
		\hline\hline $M$ & \quad 50 & 100 & 150 & 200 & 300 & 500 \\
		\hline $\bm{\epsilon}_{sh}$ &$1.26e{-15}$ & $1.13e{-15}$ & 
		$9.8e{-16}$&
		$9.76e{-16}$ & $9.56e{-16}$ & 
		$ 1.02e{-15}$   \\
		$\bm{\epsilon}_{mh}$ &$1.5e{-3}$ & $1.4e{-3}$ & 
		$1.2e{-3}$&
		$1e{-3}$ & $1.1e{-3}$ & $1.1e{-3}$   \\    $\bm{\epsilon}^r_{sh}$ &$1.28e{-15}$ & $1.11e{-15}$ & 
		$9.92e{-16}$&
		$9.84e{-16}$ & $9.67e{-16}$ & 
		$1.02e{-15}$   \\
		$\bm{\epsilon}^r_{mh}$ &$1.6e{-3}$ & $1.4e{-3}$ & 
		$1.2e{-3}$&
		$1e{-3}$ & $1.1e{-3}$ & $1.1e{-3}$  \\
		\hline\hline
	\end{tabular*}
	\vspace{-0.3cm}
\end{table}

Further more, we present the absolute and relative errors between the average solution obtained through two splitting methods and the average solution obtained through finite element sampling in Table 2 for different perturb ratio $\kappa$. In particular, we note that the effectiveness of the splitting method does not depend on the initial value. For the original SNS equation, if the initial point is not selected near the true solution $\textbf{u}$, the solution of the equation becomes unstable when the value of $\sigma$ approaches 8.1. However, simultaneously, the splitting method can still be solved efficiently, that is, the order of error remains at $10^{-15}$. This result also fully indicates numerically that the splitting method expands the stability condition of the equation solution compared with the direct solution.
\begin{table}[h]
	\vspace{-0.3cm}
	\centering
	\caption{{The error measurements  $\bm{\epsilon}_{sh}$, $\bm{\epsilon}_{mh}$,  $\bm{\epsilon}^r_{sh}$ and $\bm{\epsilon}^r_{mh}$ as perturb ratio $\kappa=0.1711, 0.3424, 0.5139, 0.6839, 0.8547
			$, where $\nu=0.02$ and
			$M=200$.}}
	\vspace{0.3cm}
	\begin{tabular*}{\textwidth}{@{\extracolsep{\fill}}llccccc}
		\hline\hline	$\sigma$ & \quad0.8 & 1.6 & 2.4 & 3.2 & 4 &8 \\
		\hline	$\bm{\epsilon}_{sh}$ &$9.39e{-16}$ & $9.59e{-16}$ & 
		$9.76e{-16}$&
		$9.96e{-16}$ & $1.02e{-15}$ & 
		$ 1.17e{-15}$   \\
		$\bm{\epsilon}_{mh}$ &$1.13e{-4}$ & $4.34e{-4}$ & 
		$1e{-3}$&
		$2.1e{-3}$ & $2.9e{-3}$ & $1.7e{-2}$   \\
		$\bm{\epsilon}^r_{sh}$ &$9.45e{-16}$ & $9.66e{-16}$ & 
		$9.84e{-16}$&
		$1e{-15}$ & $1.04e{-15}$ & 
		$ 1.17e{-15}$   \\
		$\bm{\epsilon}^r_{mh}$ &$1.13e{-4}$ & $4.37e{-4}$ & 
		$1e{-3}$&
		$2.1e{-3}$ & $2.9e{-3}$ & $1.69e{-2}$  \\
		\hline\hline
	\end{tabular*}
	\vspace{-0.3cm}
\end{table}


\section{Conclusions and discussions} \label{sec6}
In recent years, the splitting method has garnered significant attention in the field of stochastic partial differential equations (SPDEs) due to its outstanding performance. In this study, we apply the splitting method to  steady-state SNS equations with spatial  noise, effectively decomposing them into a deterministic steady-state NS equation and a stochastic equation.

The stability, existence, and uniqueness of the solutions derived through the splitting method are thoroughly evaluated. By analyzing the properties of the splitting equation, we are able to obtain  more relaxed stability conditions and thus better handle data with nonlinear characteristics. In large-scale calculation, the deterministic equations in the splitting method do not need to be solved repeatedly, and the nonlinear term is ignored under certain conditions, and the modified splitting scheme is adopted, which speeds up the calculation efficiency while ensuring the accuracy.

Moreover, we compute error statistics both theoretically and numerically to assess the accuracy of the splitting method and the modified splitting format. These error statistics provide valuable insights into the quality of the approximate solutions obtained through the splitting method. By comparing the solutions derived via the splitting method with the original equations, we can quantitatively evaluate the performance of the splitting method and validate its accuracy. In the case of small perturbations, the deterministic solutions derived from the splitting method can quickly provide statistical information about the original SNS equations.

In summary, our findings demonstrate that the splitting method serves as an effective tool for the mathematical analysis and numerical approximation of stochastic steady-state NS equations. The theoretical analysis showcases that, by considering the existence and uniqueness of the solutions and the error range, the splitting method can yield precise approximations for the system solutions. 

We emphasize that the proposed splitting method can be extended to nonlinear SPDE problems with driven noise in the future. The advantages of this generalization are mainly reflected in extending the stability of the solution and improving the efficiency of large-scale calculations.As a concrete demonstration, we have successfully implemented this framework to unsteady stochastic Navier-Stokes systems, obtaining promising results.These developments are anticipated to stimulate innovative research pathways in stochastic flow modeling and uncertainty quantification.

\section*{Acknowledgments}
This work of Ju Ming is supported by the National Natural Science Foundation of China (No.12471379).

\section*{Conflicts of Interest} The authors declare no conflicts of interest.


\begin{thebibliography}{00}
	\bibitem[Navier(1823)]{C_L_1823}
	Navier C-L,
	\textit{Mémoire sur les lois du mouvement des fluides},
	Mémoires de l’Académie Royale des Sciences de l’Institut de France,
	1823,
	6: 389-440.
	
	\bibitem[Przemyslaw(2019)]{Przemyslaw_2019}
	Przemyslaw M,
	\textit{Numerical optimization methods comparison based on the CFD conduction-convection heat transfer case},
	International Journal of Numerical Methods for Heat \& Fluid Flow,
	2019,
	29(6): 2080-2092.
	
	\bibitem[Bhatti et al.(2020)]{Bhatti_2020}
	Bhatti MM, Marin M, Zeeshan A, Abdelsalam SI,
	\textit{Recent trends in computational fluid dynamics},
	Frontiers in Physics,
	2020,
	8: 593111.
	
	\bibitem[Yang et al.(2021)]{Yang_2021}
	Yang S, Wang J, Dai G, Yang F, Huang FJ,
	\textit{Controlling macroscopic heat transfer with thermal metamaterials: theory, experiment and application},
	Physics Reports,
	2021,
	908(3): 1-65.
	
	\bibitem[Holden et al.(1996)]{Holden_1996}
	Holden H, Oksendal B, Uboe J, Zhang T,
	\textit{Stochastic Partial Differential Equations: A Modeling, White Noise Approach},
	Birkhäuser,
	Boston,
	1996.
	
	\bibitem[Carmona \& Rozovskii(1999)]{Carmona_1999}
	Carmona R, Rozovskii B,
	\textit{Stochastic Partial Differential Equations: Six Perspectives},
	AMS,
	Providence,
	1999.
	
	\bibitem[Kallianpur \& Xiong(1995)]{Kallianpur_1995}
	Kallianpur G, Xiong J,
	\textit{Stochastic Differential Equations in Infinite Dimensional Spaces},
	Institute of Mathematical Statistics,
	Hayward, California,
	1995.
	
	\bibitem[Lions \& Souganidis(2000)]{Lions_2000}
	Lions P, Souganidis P,
	\textit{Fully nonlinear stochastic pde with semilinear stochastic dependence},
	C.R. Acad. Sci. Paris Sér. I Math.,
	2000,
	331(8): 617-624.
	
	\bibitem[Da Prato \& Zabczyk(1992)]{DaPrato_1992}
	Da Prato G, Zabczyk J,
	\textit{Stochastic Equations in Infinite Dimensions},
	Cambridge University Press,
	Cambridge, UK,
	1992.
	
	\bibitem[Alos \& Bonaccorsi(2002)]{Alos_2002}
	Alós E, Bonaccorsi S,
	\textit{Stochastic Partial Differential Equations with Dirichlet White-Noise Boundary Conditions},
	Ann. I. H. Poincar\'e,
	2002,
	38: 152-154.
	
	\bibitem[Gunzburger et al.(2016)]{Gunzburger_2016}
	Gunzburger MD, Hou L, Ming J,
	\textit{Stochastic steady-state Navier–Stokes equations with additive random noise},
	Journal of Scientific Computing,
	2016,
	66: 672-691.
	
	\bibitem[Deugoue et al.(2021)]{Deugoue_2021}
	Deugoue G, Moghomye BJ, Medjo TT,
	\textit{Splitting-up scheme for the stochastic Cahn–Hilliard Navier–Stokes model},
	Stochastics and Dynamics,
	2021,
	21(1): 2150005.
	
	\bibitem[Bessaih \& Millet(2019)]{Bessaih_2019}
	Bessaih H, Millet A,
	\textit{Strong convergence of time numerical schemes for the stochastic two-dimensional Navier–Stokes equations},
	IMA Journal of Numerical Analysis,
	2019,
	39(4): 2135-2167.
	
	\bibitem[Zhao(2017)]{Zhao_2017}
	Zhao W,
	\textit{Comparison Of Different Noise Forcings, Regularization Of Noise, And Optimal Control For The Stochastic Navier-Stokes Equation},
	PhD thesis,
	The Florida State University,
	2017.
	
	\bibitem[Deugoue \& Sango(2014)]{Deugoue_2014}
	Deugoue G, Sango M,
	\textit{Convergence for a splitting-up scheme for the 3D stochastic Navier-Stokes-$\alpha$ model},
	Stochastic Analysis and Applications,
	2014,
	32(2): 253-279.
	
	\bibitem[Dörsek(2012)]{Dorsek_2012}
	Dörsek P,
	\textit{Semigroup splitting and cubature approximations for the stochastic Navier–Stokes equations},
	SIAM Journal on Numerical Analysis,
	2012,
	50(2): 729-746.
	
	\bibitem[Zhao \& Gunzburger(2020)]{Zhao_2020}
	Zhao W, Gunzburger M,
	\textit{Auxiliary equations approach for the stochastic unsteady Navier–Stokes equations with additive random noise},
	Numer. Math. Theory Methods Appl.,
	2020,
	13: 1-26.
	
	\bibitem[Capdeville(2022)]{Capdeville_2022}
	Capdeville G,
	\textit{Gas kinetic principles in Navier–Stokes finite-volume solvers},
	Journal of Computational Science,
	2022,
	63: 101756.
	
	\bibitem[Lucca et al.(2023)]{Lucca_2023}
	Lucca A, Busto S, Dumbser M,
	\textit{An implicit staggered hybrid finite volume/finite element solver for the incompressible Navier-Stokes equations},
	East Asian Journal on Applied Mathematics,
	2023, 13(3): 671-716.
	
	\bibitem[Cherfils \& Petcu(2016)]{Cherfils_2016}
	Cherfils L, Petcu M,
	\textit{On the viscous Cahn-Hilliard-Navier-Stokes equations with dynamic boundary conditions},
	Communications on Pure \& Applied Analysis,
	2016,
	15(4): 1147-1167.
	
	\bibitem[Cho et al.(2023)]{Cho_2023}
	Cho MH, Ha ST, Yoo JY, et al,
	\textit{A numerical experiment on the stability and convergence characteristics of some splitting mixed-finite element methods for solving the incompressible Navier-Stokes equations},
	Journal of Mechanical Science and Technology,
	2023.
	
	\bibitem[Sousedík \& Elman(2016)]{Sousedík_2016}
	Sousedík B, Elman HC,
	\textit{Stochastic Galerkin methods for the steady-state Navier–Stokes equations},
	Journal of Computational Physics,
	2016,
	316: 435-452.
	
	\bibitem[Strang(1968)]{Strang_1968}
	Strang G,
	\textit{On the construction and comparison of difference schemes},
	SIAM J. Numer. Analysis,
	1968,
	5: 506-517.
	
	\bibitem[Marchuk(1968)]{Marchuk_1968}
	Marchuk GI,
	\textit{Some applications of splitting-up methods to the solution of mathematical physics problems},
	Aplikace Matematiky,
	1968,
	13: 103-132.
	
	\bibitem[Seydaoglu et al.(2016)]{Seydaoglu_2016}
	Seydaoğlu M, Erdoğan U, Öziş T,
	\textit{Numerical solution of Burgers’ equation with high order splitting methods},
	Journal of Computational and Applied
	Mathematics,
	2016,
	291: 410-421.
	
	\bibitem[Bréhier \& Goudenège(2016)]{Brehier_2016}
	Bréhier CE, Goudenège L,
	\textit{Analysis of some splitting schemes for the stochastic Allen-Cahn equation},
	Journal of Computational and Applied Mathematics,
	2016,
	291: 410-421.
	
	\bibitem[Gunzburger et al.(2013)]{Gunzburger_2013}
	Gunzburger MD, Hou L, Ming J,
	\textit{Efficient numerical methods for stochastic partial differential equations through transformation to equations driven by correlated noise},
	International Journal for Uncertainty Quantification,
	2013,3(4).
	
	\bibitem[Chorin(1968)]{Chorin_1968}
	Chorin AJ,
	\textit{Numerical solution for the Navier-Stokes equations},
	Math. Comp.,
	1968,
	22: 745-762.
	
	\bibitem[Temam(1969)]{Temam_1969}
	Temam R,
	\textit{Sur l’approximation de la solution des équations de Navier-Stokes par la méthode des pas fractionnaires (II)},
	Arch. Ration. Mech. Anal.,
	1969,33: 377-385.
	
	\bibitem[Guermond et al.(2006)]{Guermond_2006}
	Guermond J, Minev P, Shen J,
	\textit{An overview of projection methods for incompressible flows},
	Comput. Methods Appl. Mech. Engrg.,
	2006, 195: 6011-6045.
	
	\bibitem[Brown et al.(2001)]{Brown_2001}
	Brown D, Cortez R, Minion M,
	\textit{Accurate projection methods for the incompressible Navier-Stokes equations},
	J. Comput. Phys.,
	2001,168: 464-499.
	
	\bibitem[Prohl(2009)]{Prohl_2009}
	Prohl A,
	\textit{On pressure approximation via projection methods for nonstationary incompressible Navier–Stokes equations},
	SIAM J. Numer. Anal.,
	2009,47: 158-180.
	
	\bibitem[Olshanskii et al.(2010)]{Olshanskii_2010}
	Olshanskii MA, Sokolov A, Turek S,
	\textit{Error analysis of a projection method for the Navier-Stokes equations with Coriolis force},
	J. Math. Fluid Mech.,
	2010,12: 485-502.
	
	\bibitem[Saleri \& Veneziani(2006)]{Saleri_2006}
	Saleri F, Veneziani A,
	\textit{Pressure correction algebraic splitting methods for the incompressible Navier–Stokes equations},
	SIAM J. Numer. Anal.,
	2006,43: 174-194.
	
	\bibitem[Badia et al.(2008)]{Badia_2008}
	Badia S, Quaini A, Quarteroni A,
	\textit{Splitting methods based on algebraic factorization for fluid-structure interaction},
	SIAM J. Sci. Comput.,
	2008, 30: 1778-1805.
	
	\bibitem[Bertagna et al.(2016)]{Bertagna_2016}
	Bertagna L, Quaini A, Veneziani A,
	\textit{Deconvolution-based nonlinear filtering for incompressible flows at moderately large Reynolds numbers},
	Internat. J. Num. Methods Fluids,
	2016, 81: 463-488.
	
	\bibitem[Reynolds(1883)]{Reynolds_1883}
	Reynolds O,
	\textit{An experimental investigation of the circumstances which determine whether the motion of water shall be direct or sinuous, and of the law of resistance in parallel channels},
	Philosophical Transactions of the Royal Society,
	1883,174: 935-982.
	
	\bibitem[Lamb(1895)]{Lamb_1895}
	Lamb H,
	\textit{On the Dynamical Theory of Incompressible Viscous Fluids and the Determination of the Criterion},
	Proceedings of the Royal Society of London,
	1895,
	58: 51-63.
	
	\bibitem[Farrell \& Ioannou(1993)]{Farrell_1993}
	Farrell BF, Ioannou PJ,
	\textit{Stochastic Forcing of the Linearized Navier-Stokes Equations},
	Physics of Fluids A,
	1993,5: 2600-2609.
	
	\bibitem[Jovanović \& Bamieh(2001)]{Jovanovic_2001}
	Jovanović MR, Bamieh B,
	\textit{The Spatio-Temporal Impulse Response of the Linearized Navier-Stokes Equations},
	Proceedings of the 2001 American Control Conference,
	2001: 1948-1953.
	
	\bibitem[Barbu(2011)]{Barbu_2011}
	Barbu V,
	\textit{The internal stabilization by noise of the linearized Navier-Stokes equation},
	ESAIM: Control, Optimisation and Calculus of Variations,
	2011,17(1): 117-130.
	
	\bibitem[Grenier \& Nguyen(2022)]{Grenier_2022}
	Grenier E, Nguyen TT,
	\textit{Green function for linearized Navier–Stokes around a boundary shear layer profile for long wavelengths},
	Annales de l'Institut Henri Poincaré C,2022, 40(6): 1457-1485.
	
	\bibitem[Adams(1978)]{Adams_1978}
	Adams RA,
	\textit{Sobolev spaces},
	Academic Press,
	New York,
	1978.
	
	\bibitem[Gunzburger(1989)]{Gunzburger_1989}
	Gunzburger M,
	\textit{Finite Element Methods for Viscous Incompressible Flows: A Guide to Theory, Practice, and Algorithms},
	Academic Press,
	Boston,
	1989.
	
	\bibitem[Smolyak(1963)]{Smolyak_1963}
	Smolyak S,
	\textit{Quadrature and interpolation formulas for tensor products of certain classes of functions},
	Soviet Mathematics Doklady,
	1963,
	4: 240-243.
	
	\bibitem[Babuška et al.(2004)]{Babuska_2004}
	Babuška I, Tempone R, Zouraris GE,
	\textit{Galerkin finite element approximations of stochastic elliptic partial differential equations},
	SIAM J. Numer. Anal.,
	2004,
	42(2): 800-825.
	
	\bibitem[Gyöngy \& Nualart(1997)]{Gyongy_1997}
	Gyöngy I, Nualart D,
	\textit{Implicit scheme for stochastic parabolic partial differential equations driven by space-time white noise},
	Potential Anal.,
	1997,
	7(4): 725-757.
	
	\bibitem[Ming \& Gunzburger(2013)]{Ming_2013}
	Ming J, Gunzburger M,
	\textit{Efficient numerical methods for stochastic partial differential equations through transformation to equations driven by correlated noise},
	International Journal for Uncertainty Quantification,
	2013.
	
	\bibitem[Gunzburger \& Zhao(2019)]{Gunzburger_2019}
	Gunzburger M, Zhao W,
	\textit{Descriptions, Discretizations, and Comparisons of Time/Space Colored and White Noise Forcings of the Navier-Stokes Equations},
	SIAM Journal on Scientific Computing,
	2019,
	41(4): 2579-2602.
	
	\bibitem[Nobile et al.(2008)]{Nobile_2008}
	Nobile F, Tempone R, Webster CG,
	\textit{An anisotropic sparse grid stochastic collocation method for partial differential equations with random input data},
	SIAM J. Numerical Analysis,
	2008,
	46(5): 2411-2442.
	
	\bibitem[Allen et al.(1998)]{Allen_1998}
	Allen EJ, Novosel SJ, Zhang Z,
	\textit{Finite element and difference approximation of some linear stochastic partial differential equations},
	Stochastics and stochastics reports,
	1998,
	64(1-2): 117-142.
	
	\bibitem[Du \& Zhang(2002)]{Du_2002}
	Du Q, Zhang T,
	\textit{Numerical approximation of some linear stochastic partial differential equations driven by special additive noises},
	SIAM Journal on Numerical Analysis,
	2002,
	40(4): 1421-1445.
	
	\bibitem[Yan(2004)]{Yan_2004}
	Yan Y,
	\textit{Semidiscrete Galerkin approximation for a linear stochastic parabolic partial differential equation driven by an additive noise},
	BIT Numerical Mathematics,
	2004,
	44(4): 829-847.
	
	\bibitem[Temam(2001)]{Temam_2001}
	Temam R,
	\textit{Navier-Stokes Equations, Theory and Numerical Analysis},
	AMS, Chelsea,
	2001.
	
	\bibitem[Girault \& Raviart(1986)]{Girault_1986}
	Girault V, Raviart P-A,
	\textit{Finite Element Methods for Navier-Stokes Equations},
	Springer,
	New York,
	1986.
	
	\bibitem[Douglas et al.(1988)]{Douglas_1988}
	Douglas AN, Scott LR, Vogelius M,
	\textit{Regular inversion of the divergence operator with Dirichlet boundary conditions on a polygon},
	Ann. Scuola Norm. Sup. Pisa Cl. Sci.,
	1988,
	15(2): 169-192.
	
	\bibitem[Hou et al.(2006)]{Hou_2006}
	Hou TY, Luo W, Rozovskii B, Zhou H,
	\textit{Wiener chaos expansions and numerical solutions of randomly forced equations of fluid mechanics},
	J. Comput. Phys.,
	2006,
	216: 687-706.
	
	\bibitem[Powell \& Silvester(2012)]{Powell_2012}
	Powell CE, Silvester DJ,
	\textit{Preconditioning steady-state Navier–Stokes equations with random data},
	SIAM J. Sci. Comput.,
	2012,
	34(5): 2482-2506.
	
\end{thebibliography}
\end{document}